\tikzset{w/.style={circle, draw,inner sep=1pt},b/.style={circle,draw,fill,inner sep=2pt}, s/.style={rectangle, draw,inner sep=3pt}}
\newcommand{\nocontentsline}[3]{}
\newcommand{\tocless}[2]{\bgroup\let\addcontentsline=\nocontentsline#1{#2}\egroup}
\newcommand{\lra}{\longrightarrow}
\newcommand{\BD}{\mathbb{BD}}
\newcommand{\bD}{\mathbb{D}}
\newcommand{\bE}{\mathbb{E}}
\newcommand{\bL}{\mathbb{L}}
\newcommand{\bP}{\mathbb{P}}
\newcommand{\bT}{\mathbb{T}}
\newcommand{\Z}{\mathbb{Z}}
\newcommand{\cB}{\mathcal{B}}
\newcommand{\cC}{\mathcal{C}}
\newcommand{\cM}{\mathcal{M}}
\newcommand{\cO}{\mathcal{O}}
\newcommand{\cP}{\mathcal{P}}
\newcommand{\B}{\mathrm{B}}
\newcommand{\C}{\mathrm{C}}
\newcommand{\E}{\mathrm{E}}
\newcommand{\rH}{\mathrm{H}}
\newcommand{\rN}{\mathrm{N}}
\newcommand{\T}{\mathrm{T}}
\newcommand{\ga}{\gamma}
\newcommand{\e}{\epsilon}
\newcommand{\la}{\lambda}
\newcommand{\si}{\sigma}
\newcommand{\fd}{\mathfrak{d}}
\newcommand{\g}{\mathfrak{g}}
\newcommand{\Ass}{\mathrm{Ass}}
\newcommand{\Arr}{\mathrm{Arr}}
\newcommand{\Cois}{\mathrm{Cois}}
\newcommand{\Comm}{\mathrm{Comm}}
\newcommand{\Comp}{\mathrm{Comp}}
\newcommand{\dAff}{\mathrm{dAff}}
\newcommand{\dgmix}{\mathrm{dg}^{gr,\epsilon}}
\newcommand{\DR}{\mathrm{DR}}
\newcommand{\bDR}{\mathbf{DR}}
\newcommand{\Fun}{\mathrm{Fun}}
\newcommand{\Hom}{\mathrm{Hom}}
\newcommand{\id}{\mathrm{id}}
\newcommand{\Isot}{\mathrm{Isot}}
\newcommand{\Lagr}{\mathrm{Lagr}}
\newcommand{\Map}{\mathrm{Map}}
\newcommand{\MC}{\mathrm{MC}}
\newcommand{\Obs}{\mathrm{Obs}}
\newcommand{\uObs}{\underline{\mathrm{Obs}}}
\newcommand{\Perf}{\mathrm{Perf}}
\newcommand{\Pois}{\mathrm{Pois}}
\newcommand{\Pol}{\mathrm{Pol}}
\newcommand{\pt}{\mathrm{pt}}
\newcommand{\Rep}{\mathrm{Rep}}
\newcommand{\SC}{\mathrm{SC}}
\newcommand{\sSet}{\mathrm{sSet}}
\newcommand{\St}{\mathrm{St}}
\newcommand{\Sym}{\mathrm{Sym}}
\newcommand{\Symp}{\mathrm{Symp}}
\newcommand{\TCois}{\mathrm{TCois}}
\newcommand{\alg}{\mathrm{Alg}}
\newcommand{\balg}{\mathbf{Alg}}
\newcommand{\bdg}{\mathbf{dg}}
\newcommand{\dg}{\mathrm{dg}}
\newcommand{\CAlg}{\mathrm{CAlg}}
\newcommand{\bCAlg}{\mathbf{CAlg}}
\newcommand{\bDef}{\mathbf{Def}}
\newcommand{\Lie}{\mathrm{Lie}}
\newcommand{\bfPol}{\boldsymbol{\mathrm{Pol}}}
\newcommand{\bU}{\mathbf{U}}
\newcommand{\bZ}{\mathbf{Z}}
\newcommand{\bimod}[2]{{}_{#1}\mathbf{BMod}_{#2}}
\newcommand{\blmod}{\mathbf{LMod}}
\newcommand{\brmod}{\mathbf{RMod}}
\renewcommand{\bmod}{\mathbf{Mod}}
\newcommand{\comod}{\mathrm{CoMod}}
\DeclareMathOperator{\gr}{gr}
\DeclareMathOperator{\Spec}{Spec}
\newcommand{\llpar}{(\!(}
\newcommand{\rrpar}{)\!)}
\renewcommand{\d}{\mathrm{d}}
\newcommand{\ddr}{\mathrm{d}_{\mathrm{dR}}}
\newtheorem{thm}{Theorem}[section]
\newtheorem*{theorem}{Theorem}
\newtheorem{prop}[thm]{Proposition}
\newtheorem{lm}[thm]{Lemma}
\newtheorem{cor}[thm]{Corollary}
\newtheorem{conjecture}[thm]{Conjecture}
\theoremstyle{definition}
\newtheorem{defn}[thm]{Definition}
\theoremstyle{remark}
\newtheorem{remark}[thm]{Remark}
\newtheorem{example}[thm]{Example}
\begin{document}
\title{Derived coisotropic structures II: stacks and quantization}
\address{Dipartimento di Matematica, Universit\`a di Milano, Via Cesare Saldini 50, 20133 Milan, Italy}
\email{valerio.melani@outlook.com}
\author{Valerio Melani}
\address{Department of Mathematics, University of Geneva, 2-4 rue du Lievre, 1211 Geneva, Switzerland}
\curraddr{Institut f\"{u}r Mathematik, Winterthurerstrasse 190, 8057 Z\"{u}rich, Switzerland}
\email{pavel.safronov@math.uzh.ch}
\author{Pavel Safronov}
\begin{abstract}
We extend results about $n$-shifted coisotropic structures from part I of this work to the setting of derived Artin stacks. We show that an intersection of coisotropic morphisms carries a Poisson structure of shift one less. We also compare non-degenerate shifted coisotropic structures and shifted Lagrangian structures and show that there is a natural equivalence between the two spaces in agreement with the classical result. Finally, we define quantizations of $n$-shifted coisotropic structures and show that they exist for $n>1$.
\end{abstract}
\maketitle

\tableofcontents

\addtocontents{toc}{\protect\setcounter{tocdepth}{1}}

\section*{Introduction}

This paper is a continuation of \cite{MS1} where we have defined a notion of an $n$-shifted coisotropic structure on a morphism of commutative dg algebras. In this paper we extend this definition to derived Artin stacks. Some of our results are as follows:
\begin{itemize}
\item An intersection of $n$-shifted coisotropic morphisms carries an $(n-1)$-shifted Poisson structure.

\item A non-degenerate $n$-shifted coisotropic structure is the same as an $n$-shifted Lagrangian structure.

\item Let $f\colon L \to X$ be a morphism of derived stacks equipped with an $n$-shifted coisotropic structure. If $n>1$, then $f$ admits a canonical deformation quantization. 
\end{itemize}

\subsection*{Shifted Poisson algebras}

Recall the operad $\bP_n$ which controls commutative dg algebras equipped with a Poisson bracket of degree $1-n$. An important feature of the operad $\bP_n$ is the \emph{additivity property}, which is the equivalence
\[\balg_{\bP_{n+1}}\cong \balg(\balg_{\bP_n})\]
of symmetric monoidal $\infty$-categories. Such an equivalence has been constructed by Rozenblyum and, independently, by the second author in \cite{Sa2}. In other words, one can think of a $\bP_{n+1}$-algebra as an associative algebra object in $\bP_n$-algebras. Therefore, we can define an action of a $\bP_{n+1}$-algebra $A$ on a $\bP_n$-algebra $B$ to be simply the data of a structure of a left $A$-module on $B$. Moreover, in \cite{MS1} we have constructed an explicit two-colored operad $\bP_{[n+1, n]}$ which models such actions, i.e. there is an equivalence of $\infty$-categories
\[\balg_{\bP_{[n+1, n]}}\cong \blmod(\balg_{\bP_n}),\]
where $\blmod$ is the $\infty$-category of pairs of an associative algebra and a left module.

Given a commutative algebra $A$, one defines the space $\Pois(A, n)$ of $n$-shifted Poisson structures on $A$ to be the space of lifts of $A$ to a $\bP_{n+1}$-algebra. This space has the following alternative description. There is a graded $\bP_{n+2}$-algebra $\bfPol(A, n)$ of $n$-shifted polyvectors on $A$, and it was shown in \cite{Me} that there is an equivalence of spaces
\[\Pois(A, n)\cong \Map_{\balg_{\Lie}^{gr}}(k(2)[-1], \bfPol(A, n)[n+1]),\]
where $k(2)[-1]$ is the trivial graded Lie algebra concentrated in weight $2$ and cohomological degree $1$.

A similar definition was given for $n$-shifted coisotropic structures in \cite{MS1}. Suppose $f\colon A\rightarrow B$ is a morphism of commutative dg algebras. We can consider $f$ as an object of $\blmod(\bCAlg)$, where $\bCAlg$ is the $\infty$-category of commutative dg algebras. In other words, $f$ endows $B$ with an action of $A$. Then the space $\Cois(f, n)$ of $n$-shifted coisotropic structures on $f\colon A\rightarrow B$ is the space of lifts of $f$ along the forgetful functor
\[\blmod(\balg_{\bP_n})\longrightarrow \blmod(\bCAlg).\]
One can construct a graded $\bP_{[n+2, n+1]}$-algebra $\bfPol(f, n) = (\bfPol(A, n), \bfPol(B/A, n-1))$ and it was shown in \cite[Theorem 4.15]{MS1} that there is an equivalence of spaces
\[\Cois(f, n)\cong \Map_{\balg_{\Lie}^{gr}}(k(2)[-1], \bfPol(f, n)[n+1]).\]
Note that we can identify
\begin{align*}
\bfPol(A, n)&\cong \Hom_A(\Sym_A(\bL_A[n+1]), A)\\
\bfPol(B/A, n-1)&\cong \Hom_B(\Sym_B(\bL_{B/A}[n]), B)
\end{align*}
as graded commutative algebras and the action on the level of commutative algebras is induced from the morphism $\bL_{B/A} \to \bL_A\otimes_A B [1]$.

\subsection*{Shifted Poisson structures on stacks}

Recall that a Poisson structure on a smooth scheme $X$ is defined to be the structure of a $k$-linear Poisson algebra on the structure sheaf $\cO_X$. If $X$ is a derived Artin stack, we no longer have the structure sheaf $\cO_X$ as an object of a category of sheaves of $k$-modules, so it is not clear how to extend the above definition of an $n$-shifted Poisson structure to stacks.

To a derived Artin stack $X$ we can associate its de Rham stack $X_{DR}$ together with a projection $q\colon X\rightarrow X_{DR}$. Since the cotangent complex of the de Rham stack $X_{DR}$ is trivial, it is reasonable to expect that an $n$-shifted Poisson structure on $X$ is the same as a relative $n$-shifted Poisson structure on $q\colon X\rightarrow X_{DR}$. This simplifies the problem as now the fibers of $q$ are affine formal derived stacks, in the sense of \cite[Section 2.2]{CPTVV}. Even though they are not affine schemes, it is shown in \cite{CPTVV} that they are controlled by a certain \emph{graded mixed} commutative dg algebra.

More precisely, the general theory of \emph{formal localization} developed in \cite[Section 2]{CPTVV} produces a graded mixed commutative algebra $\bD_{X_{DR}}$ enhancing the structure sheaf $\cO_{X_{DR}}$ and a graded mixed commutative algebra $\cB_X$ enhancing the pushforward $q_*\cO_X$. Then an $n$-shifted Poisson structure on a derived Artin stack $X$ is a lift of $\cB_X$ to a $\bD_{X_{DR}}$-linear $\bP_{n+1}$-algebra. Note that one has to introduce certain twists $\cB_X(\infty)$ and $\bD_{X_{DR}}(\infty)$ to fully capture all polyvectors, but we ignore this technical difference in the introduction.

The same procedure works almost verbatim in the relative setting and we can define the space $\Cois(f, n)$ of $n$-shifted coisotropic structures on a morphism $f\colon L\rightarrow X$ of derived Artin stacks (see Definition \ref{def:generalcoisotropics}). Note that for our purposes it is useful to include the data of an $n$-shifted Poisson structure on $X$ in the definition of the space $\Cois(f, n)$ and not fix it in advance.

One can similarly define the notion of relative polyvectors $\bfPol(f, n)$ which is again a graded $\bP_{[n+2, n+1]}$-algebra associated to a morphism $f\colon L\rightarrow X$ of derived Artin stacks. Extending \cite[Theorem 4.15]{MS1} to the setting of derived stacks, we obtain the following result (see Theorem \ref{thm:spaceofcoisotropics2}):
\begin{theorem}
Let $f\colon L\rightarrow X$ be a morphism of derived Artin stacks. Then we have an equivalence of spaces
\[\Cois(f, n)\cong \Map_{\balg_{\Lie}^{gr}}(k(2)[-1], \bfPol(f, n)[n+1]).\]
\end{theorem}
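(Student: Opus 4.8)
The plan is to argue by \emph{formal localization} in the sense of \cite[Section 2]{CPTVV}, reducing the statement to the affine case \cite[Theorem 4.15]{MS1}. Recall that, by Definition \ref{def:generalcoisotropics}, both $\Cois(f, n)$ and $\bfPol(f, n)$ are assembled from local data on de Rham stacks: there is a small site of affine schemes $U$ mapping to $L_{DR}$, and hence, via $f_{DR}\colon L_{DR}\rightarrow X_{DR}$, to $X_{DR}$, and over each such $U$ the morphism $f$ together with the formal-localization data produces a morphism $\hat f_U$ of graded mixed commutative algebras, which we regard as an object of $\blmod(\bCAlg(\cM_U))$, where $\cM_U$ is the symmetric monoidal $\infty$-category of graded mixed modules over the restriction $\bD_U$ of $\bD_{X_{DR}}(\infty)$ (the twist by $(\infty)$ being understood throughout). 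One then has $\Cois(f, n)\cong \lim_U \Cois(\hat f_U, n)$, where $\Cois(\hat f_U, n)$ is the space of lifts of $\hat f_U$ along the forgetful functor $\blmod(\balg_{\bP_n}(\cM_U))\rightarrow \blmod(\bCAlg(\cM_U))$, and similarly $\bfPol(f, n)\cong \lim_U \bfPol(\hat f_U, n)$.

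The first step is the local equivalence: for each $U$, applying \cite[Theorem 4.15]{MS1} internally to $\cM_U$ yields
\[\Cois(\hat f_U, n)\cong \Map_{\balg_{\Lie}^{gr}(\cM_U)}(k(2)[-1], \bfPol(\hat f_U, n)[n+1]).\]
This is legitimate because the proofs of \cite[Theorem 4.15]{MS1} and of \cite{Me}, on which it relies, only use structure available in any stable symmetric monoidal $\infty$-category: the operadic additivity $\balg_{\bP_{[n+1,n]}}\cong \blmod(\balg_{\bP_n})$, the existence and functoriality of relative cotangent complexes, and the resulting identification of $\bfPol$ with internal $\Hom$'s of symmetric powers of cotangent complexes recalled in the introduction. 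The category $\cM_U$ possesses all of these, the twist $(\infty)$ being a formal operation on the weight grading, so the cited arguments carry over.

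The second step is naturality and descent. The equivalence of \cite[Theorem 4.15]{MS1} is natural in the morphism $f$: it is produced by a zig-zag of equivalences, each natural in $\blmod(\bCAlg)$, identifying the shifted graded $\bP_{[n+2,n+1]}$-algebra $\bfPol(f, n)[n+1]$ with the object whose associated graded $L_\infty$-algebra governs the lifting problem defining $\Cois(f, n)$. Consequently the local equivalences are compatible with the restriction functors attached to morphisms $U'\rightarrow U$, hence assemble to an equivalence of the limit diagrams over the site. Passing to the limit, and using that $\Map_{\balg_{\Lie}^{gr}}(k(2)[-1], -)$ preserves limits since it is corepresentable, we obtain
\begin{align*}
\Cois(f, n) &\cong \lim_U \Cois(\hat f_U, n)\\
&\cong \lim_U \Map_{\balg_{\Lie}^{gr}}(k(2)[-1], \bfPol(\hat f_U, n)[n+1])\\
&\cong \Map_{\balg_{\Lie}^{gr}}(k(2)[-1], \bfPol(f, n)[n+1]),
\end{align*}
the last equivalence being the formal-localization definition of $\bfPol(f, n)$ (together with the fact that the shift $[n+1]$ commutes with limits).

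The main obstacle is controlling the interaction between formal localization and the Lie-theoretic presentation, and it is concentrated in two points. First, one must know that the naive relative polyvectors fail smooth descent on the de Rham stacks but that the Tate-twisted versions $\bfPol(\hat f_U, n)$ do descend, so that their limit is a well-defined graded $\bP_{[n+2,n+1]}$-algebra computing polyvectors correctly; this is precisely why the twist $(\infty)$ cannot be suppressed outside the introduction, and it requires transporting the formal-localization results of \cite{CPTVV} to the present relative, two-colored setting. Second, one must ensure the local equivalences are natural as a morphism of $\infty$-functors on the site, not merely objectwise, so that passage to the limit is meaningful; the cleanest way to secure this is to observe that the construction of \cite[Theorem 4.15]{MS1} upgrades to an equivalence of $\infty$-functors $\blmod(\bCAlg(\cM_U))\rightarrow \mathbf{Spaces}$, to which formal localization then applies directly.
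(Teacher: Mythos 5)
Your overall strategy---reduce to the affine statement \cite[Theorem 4.15]{MS1} by formal localization---is the right one, and your ``local'' step is essentially what the paper does (it applies that theorem once in the $\infty$-category $\cC_L$ of $\bD_{L_{DR}}(\infty)$-modules rather than objectwise over a site, but that difference is cosmetic). The genuine gap is in the assembly step: the identification $\Cois(f,n)\cong \lim_U \Cois(\hat f_U, n)$, with $U$ ranging over affines over $L_{DR}$, is not Definition \ref{def:generalcoisotropics} and is false in general. That limit computes $\Cois(f^*_{\cB}(\infty),n)$, the space of coisotropic structures on the morphism $f^*\cB_X(\infty)\to \cB_L(\infty)$ of prestacks on $L_{DR}$; but $\Cois(f,n)$ is \emph{defined} as the fiber product of this space with $\Pois(X,n)$ over $\Pois(f^*\cB_X(\infty),n)$, i.e.\ one must additionally require that the induced Poisson structure on $f^*\cB_X(\infty)$ comes from a Poisson structure on $\cB_X(\infty)$ over $X_{DR}$. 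Affines over $L_{DR}$ only see the formal neighbourhood of the image of $L$ in $X$; in the extreme case $L=\emptyset$ your limit is a point while $\Cois(f,n)=\Pois(X,n)$. The same defect infects $\bfPol(f,n)\cong\lim_U\bfPol(\hat f_U,n)$: by definition $\bfPol(f,n)=\bU(\bfPol(X,n),\bfPol(L/X,n-1))$, whose first component is a global section over $X_{DR}$, not over $L_{DR}$, so your limit computes $\bfPol(f^*_{\cB}(\infty),n)$ instead.

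The repair is precisely the missing corner. Both $\Cois(f,n)$ and $\bfPol(f,n)$ sit in Cartesian squares with vertices $\Pois(X,n)$ (resp.\ $\bfPol(X,n)$) and $\Cois(f^*_{\cB}(\infty),n)$ (resp.\ $\bfPol(f^*_{\cB}(\infty),n)$) over $\Pois(f^*\cB_X(\infty),n)$ (resp.\ $\bfPol(f^*\cB_X(\infty),n)$). Your local equivalence identifies the $L_{DR}$-corner; Theorem \ref{thm:spaceofpoisson}, applied over $X_{DR}$ and over $L_{DR}$, identifies the remaining two corners; and since $\Map_{\alg_{\Lie}^{gr}}(k(2)[-1],-)$ preserves pullbacks, the two Cartesian squares match and the theorem follows. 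Finally, the descent issue you flag for naive versus Tate polyvectors is not where the difficulty lies: the paper works with prestacks and notes that polyvectors are functorial, hence can be handled objectwise (or via the model structure on $\cC_L$ from \cite{TV}), so no separate descent argument is needed.
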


Here are some examples of coisotropic structures described in Section \ref{sect:coisotropicexamples}.
\begin{itemize}
\item (Classical case). Suppose $f\colon L\hookrightarrow X$ is a smooth closed subscheme of a smooth scheme $X$. Then we show that the \emph{space} of $0$-shifted Poisson structures on $X$ is equivalent to the \emph{set} of ordinary Poisson structures on $X$ and the \emph{space} of $0$-shifted coisotropic structures on $f\colon L\hookrightarrow X$ is equivalent to the \emph{subset} of the set of Poisson structures on $X$ for which $L$ is coisotropic in the classical sense.

\item (Identity). We show that the space of $n$-shifted coisotropic structures on the identity morphism $\id\colon X\rightarrow X$ is equivalent to the space of $n$-shifted Poisson structures on the target. In other words, identity has a unique coisotropic structure. This has an interesting consequence: the forgetful morphisms
\[
\Pois(X, n-1)\longleftarrow \Cois(\id, n)\stackrel{\sim}\longrightarrow \Pois(X, n)
\]
between spaces of shifted coisotropic and shifted Poisson structures assemble to give a forgetul map $\Pois(X, n)\rightarrow \Pois(X, n-1)$ from $n$-shifted Poisson structures on $X$ to $(n-1)$-shifted Poisson structures on $X$. This map is nontrivial in general even though the underlying bivector of the corresponding $(n-1)$-shifted Poisson structure can be shown to be zero.

\item (Graph). Suppose $X$ and $Y$ are derived Artin stacks equipped with $n$-shifted Poisson structures. Moreover, suppose $f\colon X\rightarrow Y$ is a morphism compatible with the Poisson structures. Then we show that the graph $X\rightarrow \overline{X}\times Y$ carries a canonical $n$-shifted coisotropic structure, where $\overline{X}$ is the same stack as $X$ but equipped with the opposite $n$-shfited Poisson structure. In fact, this gives a complete characterization of $n$-shifted coisotropic structures on the graph (see Theorem \ref{thm:graphpoisson}).
\end{itemize}

In \cite{PTVV} it was shown that given two Lagrangians $L_1, L_2\rightarrow X$ in an $n$-shifted symplectic stack $X$, their derived intersection $L_1\times_X L_2$ carries a canonical $(n-1)$-shifted symplectic structure. We extend this result to coisotropic structures in the following statement (see Theorem \ref{thm:generalintersections}).

\begin{theorem}
Suppose $X$ is an $n$-shifted Poisson stack and $L_1, L_2\rightarrow X$ are two morphisms equipped with compatible $n$-shifted coisotropic structures. Then the intersection $L_1\times_X L_2$ carries a natural $(n-1)$-shifted Poisson structure such that the natural projection
\[L_1\times_X L_2\longrightarrow \overline{L}_1\times L_2\]
is a morphism of $(n-1)$-shifted Poisson stacks.
\end{theorem}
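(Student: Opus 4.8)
The plan is to reduce the statement to an algebraic computation about the relative polyvectors $\bfPol(f,n)$ and then feed it into the characterization of $\Cois(f,n)$ and $\Pois(-,n-1)$ provided by the earlier theorem. Concretely, write $p\colon L_1\times_X L_2 \to \overline{L}_1\times L_2$ for the natural projection and $g\colon \overline{L}_1\times L_2 \to \overline{X}\times X$ for the product of the two maps. One observes that the composite $g\circ p$ factors through the diagonal, and indeed $L_1\times_X L_2$ is precisely the derived fiber product of $\overline{L}_1\times L_2 \to \overline{X}\times X$ with the diagonal $X\to \overline{X}\times X$. Since the diagonal of an $n$-shifted Poisson stack $X$ carries a canonical $n$-shifted coisotropic structure — this is the Graph example applied to $\id_X$, with $\overline{X}\times X$ Poisson via the opposite structure on the first factor — and $\overline{L}_1\times L_2 \to \overline{X}\times X$ carries the product coisotropic structure (using that $\overline{L}_1\to \overline{X}$ is coisotropic for the opposite Poisson structure, which is immediate from the definitions), we are exactly in the situation of intersecting two compatible coisotropic morphisms into a common $n$-shifted Poisson target. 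So it suffices to prove the $L_2 = X$, $g = \mathrm{diagonal}$-type special case; the general case follows by base change along the product.

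The main work, then, is to produce the $(n-1)$-shifted Poisson structure on the intersection together with the compatibility of $p$. Here I would argue as follows. By the theorem identifying $\Cois(f,n)$ with $\Map_{\balg_{\Lie}^{gr}}(k(2)[-1], \bfPol(f,n)[n+1])$, a compatible pair of coisotropic structures on $L_1\to X$ and $L_2\to X$ amounts to a pair of such maps into $\bfPol(f_1,n)[n+1]$ and $\bfPol(f_2,n)[n+1]$ agreeing after projection to $\bfPol(X,n)[n+1]$. The key structural input is a morphism of graded $\bP_{n+1}$-algebras
\[
\bfPol(L_1\times_X L_2 / \overline{L}_1\times L_2,\, n-1) \longrightarrow \bfPol(L_1/X, n-1)\otimes_{\bfPol(X,n)}\bfPol(L_2/X, n-1),
\]
or rather a compatible system assembling the two relative polyvector complexes over the shared $\bfPol(X,n)$, built from the cotangent complex sequences $\bL_{L_i/X}\to \bL_{L_i}$ and the base-change formula $\bL_{L_1\times_X L_2} \simeq \bL_{L_1}|\oplus \bL_{L_2}|$ relative to $\bL_X$. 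Transporting the Lie-algebra maps through this and then using the additivity/forgetful structure $\blmod(\balg_{\bP_{n-1}})\to \balg(\balg_{\bP_{n-1}}) = \balg_{\bP_n}$ produces an $n$-shifted Poisson structure downstairs on $\overline{L}_1\times L_2$ — wait, we want $(n-1)$-shifted on the fiber product: the point is that $L_1\times_X L_2$ is a fiber of the coisotropic morphism $\overline{L}_1\times L_2 \to \overline{X}\times X$, and a coisotropic morphism equips its target-minus-Poisson-structure, i.e. equips the \emph{source} relative to the target, with a $\bP_n$-module structure over a $\bP_{n+1}$-algebra; pulling back along the diagonal (itself coisotropic) and using the interplay of the two module structures over the common $\bP_{n+1}$-base $\bfPol(\overline{X}\times X, n)$ collapses the shift by one, exactly as in the symplectic case of \cite{PTVV}.

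More precisely, the structure I want to exploit is the $\bP_{[n+1,n]}$-algebra formalism: a coisotropic morphism $g\colon Y\to Z$ into an $n$-shifted Poisson stack gives $\bfPol(Y/Z,n-1)$ the structure of a left module over the $\bP_{n+1}$-algebra $\bfPol(Z,n)$; given two such morphisms $g_1\colon Y_1\to Z$, $g_2\colon Y_2\to Z$ into the \emph{same} $Z$, the relative tensor product $\bfPol(Y_1/Z,n-1)\otimes_{\bfPol(Z,n)}\bfPol(Y_2/Z,n-1)$ — taken in the symmetric monoidal $\infty$-category $\balg_{\bP_n}$, which makes sense because a $\bP_{n+1}$-algebra is an algebra object there — is naturally a $\bP_n$-algebra, i.e. an $(n-1)$-shifted Poisson algebra, and this is what underlies $L_1\times_X L_2$. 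The compatibility of $p\colon L_1\times_X L_2\to\overline{L}_1\times L_2$ with Poisson structures then comes for free: by construction the relative tensor product maps to $\bfPol(Y_1,n)\otimes_k \bfPol(Y_2,n)$, i.e. to the polyvectors of $\overline{L}_1\times L_2$ with its product $\bP_{n+1}$-structure, and one checks this is a map of $\bP_n$-algebras. The main obstacle is making the relative tensor product over $\bfPol(Z,n)$ genuinely functorial and compatible with the formal-localization twists $\cB(\infty)$, $\bD_{Z_{DR}}(\infty)$ — i.e. checking that the base-change isomorphism for cotangent complexes refines to an equivalence of $\bD_{(\overline{X}\times X)_{DR}}$-linear graded mixed $\bP_n$-algebras after the appropriate $(\infty)$-twist; this is where the bulk of the technical verification lies, and I expect it to parallel, but be somewhat heavier than, the corresponding check in \cite{PTVV} because of the module (as opposed to algebra) structures in play.
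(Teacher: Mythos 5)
Your underlying strategy --- interpret each coisotropic structure as a left module over an algebra object in $\bP_n$-algebras and realize the intersection as a relative tensor product, so that Poisson additivity produces the $\bP_n$- (i.e.\ $(n-1)$-shifted Poisson) structure --- is exactly the mechanism of the paper's proof (Corollary \ref{cor:coiscptvv} and Propositions \ref{prop:affineintersections}, \ref{prop:coiscorrespondences}). But you apply it to the wrong objects, and this is a genuine gap rather than a notational slip. A coisotropic structure on $g\colon Y\to Z$ makes $\cB_Y(\infty)$ a left module over the $\bP_{n+1}$-algebra $\cB_Z(\infty)$ in $\balg_{\bP_n}$; it does \emph{not} make $\bfPol(Y/Z,n-1)$ a left module over ``the $\bP_{n+1}$-algebra $\bfPol(Z,n)$'' in the sense you need: $\bfPol(Z,n)$ is a $\bP_{n+2}$-algebra and the pair $(\bfPol(Z,n),\bfPol(Y/Z,n-1))$ is a $\bP_{[n+2,n+1]}$-algebra, one shift too high, and in any case the polyvectors are what \emph{classify} the structures, not what the intersection is built from. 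Consequently $\bfPol(L_1/X,n-1)\otimes_{\bfPol(X,n)}\bfPol(L_2/X,n-1)$ is not ``what underlies $L_1\times_X L_2$'': the relevant object is $j^*\cB_{L_1}(\infty)\otimes_{i^*g^*\cB_X(\infty)} i^*\cB_{L_2}(\infty)$, and the $\bP_n$-structure on the intersection is the one this relative tensor product acquires from the two $\bP_{[n+1,n]}$-structures. (Your proposed ``key structural input'' map out of $\bfPol(L_1\times_X L_2/\overline{L}_1\times L_2,\,n-1)$ is not needed and is dubious as stated: $\bL_{Y/L_1\times L_2}$ is a pullback of $\bL_{X/X\times X}\cong\bL_X[1]$, which does not match your right-hand side.)

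Once the tensor product is taken of the correct objects, essentially all the remaining content of the theorem --- which your proposal defers as ``technical verification'' --- is the identification $j^*\cB_{L_1}(\infty)\otimes_{i^*g^*\cB_X(\infty)} i^*\cB_{L_2}(\infty)\cong \cB_{Y}(\infty)$ of $\bD_{Y_{DR}}(\infty)$-algebras. The paper proves this by reducing (twists commute with colimits; forgetting the graded mixed structure preserves colimits) to the pointwise statement $\bD(L_{1,A})\otimes_{\bD(X_A)}\bD(L_{2,A})\cong\bD(Y_A)$, which via \cite[Theorem 2.2.2]{CPTVV} becomes a statement about $\Sym$ of relative cotangent complexes and is settled by the Cartesian square of relative tangent complexes of Lemma \ref{lm:cartesianT}. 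Note this only needs to be an equivalence of graded mixed commutative algebras, not of $\bP_n$-algebras as you suggest; the Poisson structure is transported afterwards. Your opening reduction to the diagonal in $\overline{X}\times X$ is harmless but buys nothing, since one still must intersect two coisotropics, and it smuggles in the unproved claim that $\overline{L}_1\to\overline{X}$ is coisotropic for the opposite structure.
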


We remark that this statement gives a nice conceptual explanation of the main result of \cite{BG}, generalizing it to a much broader context.

\subsection*{Non-degenerate coisotropic structures}

One may ask more generally how the theory of shifted symplectic and shifted Lagrangian structures of \cite{PTVV} relates to the theory of shifted Poisson and shifted coisotropic structures. Classically, a Poisson structure whose bivector induces an isomorphism $\T^*_X\stackrel{\sim}\rightarrow \T_X$ is the same as a symplectic structure. It was shown in \cite{CPTVV} and \cite{Pri1} that the subspace $\Pois^{nd}(f, n)\subset \Pois(f, n)$ of non-degenerate $n$-shifted Poisson structures, i.e. those that induce an equivalence $\bL_X\stackrel{\sim}\rightarrow \bT_X[-n]$, is equivalent to the space $\Symp(X, n)$ of $n$-shifted symplectic structures.

Suppose that $f\colon L\rightarrow X$ is equipped with an $n$-shifted coisotropic structure. Then we obtain a natural morphism of fiber sequences
\[
\xymatrix{
\bL_{L/X}[-1] \ar[d] \ar[r] & f^*\bL_X \ar[r] \ar[d] & \bL_L \ar[d] \\
\bT_L[-n] \ar[r] & f^*\bT_X[-n] \ar[r] & \bT_{L/X}[1-n]
}
\]

It is thus natural to define the subspace $\Cois^{nd}(f, n)\subset \Cois(f, n)$ of non-degenerate $n$-shifted coisotropic structures to be those that induce equivalences $\bL_X\rightarrow \bT_X[-n]$ and $\bL_{L/X}\rightarrow \bT_L[1-n]$ (which automatically implies that $\bL_L\rightarrow \bT_{L/X}[1-n]$ is an equivalence as well). We prove the following result (see Theorem \ref{thm:coisnd=lagr}).

\begin{theorem}
Suppose $f\colon L\rightarrow X$ is a morphism of derived Artin stacks. Then we have an equivalence
\[\Cois^{nd}(f, n)\cong \Lagr(f, n)\]
of spaces of non-degenerate $n$-shifted coisotropic structures and $n$-shifted Lagrangian structures on $f$.
\end{theorem}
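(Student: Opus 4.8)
The plan is to reduce the statement about stacks to the affine case and then to invoke the explicit polyvector description. Recall that both spaces have already been identified with mapping spaces out of $k(2)[-1]$ in the $\infty$-category of graded Lie algebras: the space $\Cois(f,n)$ with $\Map_{\balg_{\Lie}^{gr}}(k(2)[-1], \bfPol(f,n)[n+1])$ by the first theorem quoted above, and the analogous result for $\Lagr(f,n)$ from \cite{PTVV} and its refinements in terms of the de Rham complex $\bDR$ together with the closedness data. First I would set up the non-degeneracy condition cohomologically. A point of $\Cois(f,n)$ determines, via its leading ($2$-linear) term, a morphism $\bL_X\to \bT_X[-n]$ and a morphism $\bL_{L/X}[-1]\to\bT_L[-n]$, fitting into the morphism of fiber sequences displayed in the excerpt; non-degeneracy is the condition that the first and third (hence all) vertical arrows are equivalences. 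The key point is that this cuts out a union of connected components on both sides, so it suffices to produce an equivalence $\Cois(f,n)\cong\Lagr(f,n)$ after restricting to the relevant components, or more precisely to construct a map compatible with the non-degeneracy loci.

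Next I would construct the comparison map. The natural candidate goes through the underlying $2$-form: an $n$-shifted coisotropic structure, when non-degenerate, has a well-defined ``inverse'' which is a closed $2$-form, and the isotropy/Lagrangian data on $L$ is obtained from the coisotropic data on $B/A$ by the same inversion. Concretely, on the affine model $f\colon A\to B$ one has the graded $\bP_{[n+2,n+1]}$-algebra $\bfPol(f,n)=(\bfPol(A,n),\bfPol(B/A,n-1))$ and, on the symplectic side, the pair $(\bDR(A),\bDR(B))$ with the relevant shift and the closedness structure, and non-degeneracy gives a quasi-isomorphism between the weight-$\geq 2$ (resp. appropriately truncated) parts of the two. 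This is exactly the mechanism by which Theorem of \cite{CPTVV}/\cite{Pri1} identifies $\Pois^{nd}(X,n)$ with $\Symp(X,n)$; I would check that it is compatible with the module structure over $A$, i.e. that the inversion intertwines the left-$\bfPol(A,n)$-module structure on $\bfPol(B/A,n-1)$ with the corresponding structure on the de Rham side, using the compatibility of the maps $\bL_{B/A}\to \bL_A\otimes_A B[1]$ and $\bT_L[-n]\to f^*\bT_X[-n]$.

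To globalize, I would invoke formal localization as in the body of the paper: both $\Cois(f,n)$ and $\Lagr(f,n)$ for a morphism of derived Artin stacks are computed as limits over the relevant site of affine (formal) pieces of the corresponding affine-local spaces, using $\cB_X(\infty)$, $\bD_{X_{DR}}(\infty)$ and their relative analogues, and the non-degeneracy condition is local. Since the affine comparison is natural in $f$, it assembles to a global equivalence. One subtlety to address is that the ``twists'' $(\infty)$ must match on both sides; I expect this to follow formally once the affine case is pinned down, since the twist is defined uniformly by adjoining the Tate object and does not interact with non-degeneracy.

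The main obstacle I anticipate is \emph{not} the formal-localization step, which is by now routine given the setup, but rather the affine comparison at the level of the $2$-colored operad: one must show that inverting a non-degenerate $n$-shifted coisotropic structure on $f\colon A\to B$ produces precisely the closedness data of an $n$-shifted Lagrangian structure, \emph{functorially} and as a map of \emph{spaces} (not just $\pi_0$), and that this is inverse to the obvious map from Lagrangian structures to coisotropic structures (which sends a Lagrangian to the coisotropic structure obtained by inverting the symplectic form). Getting the homotopy-coherence right — in particular realizing the inversion as a genuine morphism in $\balg_{\Lie}^{gr}$ after passing to the non-degenerate locus, as opposed to a zig-zag — is where the bulk of the technical work will lie, and I would likely handle it by appealing to the strictification/uniqueness results for non-degenerate Poisson structures from \cite{CPTVV} and \cite{Pri1} and bootstrapping them to the relative (module) setting via the additivity equivalence $\balg_{\bP_{[n+1,n]}}\cong\blmod(\balg_{\bP_n})$.
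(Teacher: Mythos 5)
Your reduction of the stack-level statement to the affine (graded mixed cdga) case via formal localization matches the paper's first step, and you have correctly located the hard part of the argument. But at that point the proposal stops short of a proof: the "inversion" map $\Cois^{nd}(f,n)\to\Lagr(f,n)$ is never actually constructed, and your plan to obtain it by "bootstrapping the strictification/uniqueness results for non-degenerate Poisson structures" is precisely the step that does not follow formally. The difficulty is that a non-degenerate $\ga$ gives an equivalence $\bL\to\bT[-n]$ only at the level of the weight-$2$ component; producing from the full Maurer--Cartan element $\ga=\ga_2+\ga_3+\cdots$ a genuinely \emph{closed} $2$-form (a map of graded mixed complexes $k(2)[-1]\to\DR(f)[n+1]$), coherently and as a map of spaces, is the entire content of the theorem, and no amount of appeal to the absolute case $\Pois^{nd}\cong\Symp$ supplies the relative module-compatibility for free.

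The paper (following Pridham) circumvents the direct inversion altogether. It introduces the correspondence space $\Comp(f,n)$ of \emph{compatible pairs} $(\ga,\la)$, defined as the vanishing locus of a section built from the pairing $\mu(\la,\ga)$ and the weight operator $\si$, so that compatibility means $\mu(\la,\ga)\sim\si(\ga)$ in $\Pol_\ga(f,n)$. One projection, $\Comp^{nd}(f,n)\to\Cois^{nd}(f,n)$, is an equivalence for the soft reason you gesture at: non-degeneracy of $\ga$ makes $\mu(-,\ga)\colon\DR(f)\to\Pol_\ga(f,n)$ an equivalence, so the fiber is contractible. The other projection, $\Comp^{nd}(f,n)\to\Lagr(f,n)$, is the real theorem, and it is proved by induction on the weight filtration ($\Lagr^{\leq p}$, $\Cois^{\leq p}$, $\Comp^{\leq p}$): the base case $p=2$ is the linear-algebra statement $\ga^\sharp\circ\la^\sharp\circ\ga^\sharp\cong\ga^\sharp$ forcing $\la^\sharp=(\ga^\sharp)^{-1}$, and the inductive step compares obstruction bundles, the key computation being that $x\mapsto\si(x)-\nu(\la_2,\ga_2,x)$ is homotopic to $-\id$ on the weight-$(p+1)$ obstruction space. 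None of this machinery (nor the alternative route via a Darboux lemma for shifted Lagrangians, which the paper only mentions as a belief) appears in your proposal, so as written it identifies the problem without solving it.
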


Let us mention that the proof for $n=0$ was previously given by Pridham in \cite{Pri4} using a slightly different notion of coisotropic structures. We closely follow his proof to obtain the result for all $n$ for our definition of $n$-shifted coisotropic structures. The main idea is to prove a stronger result by showing that there is an equivalence of spaces equipped with a natural (co)filtration: $\Lagr(f, n)$ is filtered by the maximal weight of the form and $\Cois^{nd}(f, n)$ is filtered by the maximal weight of the polyvector. The proof then proceeds by induction by developing obstruction theory where the inductive step is a simple problem in linear algebra.

We believe an alternative proof can be given along the lines of the proof of \cite{CPTVV} by using the Darboux lemma for shifted Lagrangians from \cite{JS}.

\subsection*{Quantization}

We conclude this paper with a description of deformation quantization of shifted coisotropic structures. Recall that a deformation quantization of a $\bP_n$-algebra $A$ can be formulated in terms of lifts of $A$ to a $\BD_n$-algebra (the Beilinson--Drinfeld operad $\BD_n$ is reviewed in Section \ref{sect:BDoperads}). Since the notion of an $n$-shifted Poisson structure on a stack is reduced to a $\bP_{n+1}$-algebra structure on $\cB_X$, one can similarly define the notion of a deformation quantization of an $n$-shifted Poisson stack. One has the following results on quantizations of $n$-shifted Poisson structures on derived Artin stacks:
\begin{itemize}
\item If $n \geq 1$, it is shown in \cite{CPTVV} that every $n$-shifted Poisson stack admits a deformation quantization by using the formality of the $\bE_n$ operad.

\item If $n=0$, it is shown in \cite{Pri3} that non-degenerate 0-shifted Poisson structures (i.e. $0$-shifted symplectic structures) admit a \emph{curved} deformation quantization. In fact, the paper gives a complete characterization of such deformation quantizations and shows that they are unobstructed beyond the first order in $\hbar$.

\item If $n=-1$, it is shown in \cite{Pri2} that non-degenerate $(-1)$-shifted Poisson structures (i.e. $(-1)$-shifted symplectic structures) admit deformation quantizations in a twisted sense if $X$ is Gorenstein with a choice of the square root $\omega_X^{1/2}$ of the dualizing sheaf.
\end{itemize}

We formulate the notion of deformation quantization for $n$-shifted coisotropic structures and show that using the formality of the $\bE_n$ operad one can prove the following result (see Theorem \ref{thm:coisotropicdefquantization}).

\begin{theorem}
Suppose $n\geq 2$. Then any $n$-shifted coisotropic structure on a morphism $f\colon L\rightarrow X$ of derived Artin stacks admits a deformation quantization.
\end{theorem}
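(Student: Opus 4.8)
The plan is to follow the proof of the analogous statement for $n$-shifted Poisson structures in \cite{CPTVV}, carried out at the level of the two-colored operad governing coisotropic structures. By definition, a deformation quantization of an $n$-shifted coisotropic structure on $f\colon L\to X$ is a lift of the underlying pair of graded mixed commutative algebras to an algebra over the Beilinson--Drinfeld two-colored operad $\BD_{[n+1, n]}$ over $k\llbracket\hbar\rrbracket$ (introduced in Section~\ref{sect:BDoperads}), compatibly with the $\bD$-linear structures produced by formal localization. Here $\BD_{[n+1, n]}$ is a filtered two-colored operad satisfying $\BD_{[n+1, n]}\otimes_{k\llbracket\hbar\rrbracket}k\cong\bP_{[n+1, n]}$; its algebras may be described, in parallel with the equivalence $\balg_{\bP_{[n+1, n]}}\cong\blmod(\balg_{\bP_n})$, as pairs consisting of an associative algebra in $\BD_n$-algebras together with a left module over it in $\BD_n$-algebras. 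The first step is thus to reduce the theorem to the purely operadic statement that the reduction functor $\balg_{\BD_{[n+1, n]}}\to\balg_{\bP_{[n+1, n]}}$ admits a section, internally to the relevant symmetric monoidal $\infty$-category.

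The key input is \emph{filtered formality}. Formality of the operad $\bE_m$ over $\mathbb{Q}$, which holds precisely for $m\geq 2$, together with the Rees-type rescaling argument of \cite{CPTVV}, yields for every $m\geq 2$ an equivalence of filtered dg operads
\[\BD_m\;\simeq\;\bP_m\otimes k\llbracket\hbar\rrbracket;\]
in particular $\balg_{\BD_m}\to\balg_{\bP_m}$ admits a section. Since we assume $n\geq 2$, this equivalence is available for both $\BD_n$ and $\BD_{n+1}$ — and this is exactly where the hypothesis $n\geq 2$, rather than $n\geq 1$ as in the Poisson case, is used, since the module side of the structure requires quantizing a $\bP_n$-structure rather than only a $\bP_{n+1}$-structure. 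The operations $A\mapsto\balg(A)$ and $(\cdot)\mapsto\blmod(\cdot)$ that build the two-colored operads out of the one-colored ones are functorial and preserve equivalences — this is the content of the additivity results of \cite{Sa2} and \cite{MS1}, which admit filtered analogues — so the one-colored equivalences assemble into an equivalence of filtered two-colored operads
\[\BD_{[n+1, n]}\;\simeq\;\bP_{[n+1, n]}\otimes k\llbracket\hbar\rrbracket.\]
Hence $\balg_{\BD_{[n+1, n]}}\to\balg_{\bP_{[n+1, n]}}$ admits a section, and every $\bP_{[n+1, n]}$-algebra lifts to a $\BD_{[n+1, n]}$-algebra.

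It remains to globalize to derived Artin stacks through the formal localization formalism, exactly as in \cite{CPTVV}. By definition, an $n$-shifted coisotropic structure on $f\colon L\to X$ is a $\bP_{[n+1, n]}$-algebra structure internal to the symmetric monoidal $\infty$-category of graded mixed modules over the formal localization data (including the twists $\cB_X(\infty)$, $\bD_{X_{DR}}(\infty)$ and their relative counterparts). The formality equivalences above are defined over $\mathbb{Q}$, hence base change to this $\mathbb{Q}$-linear category, so the section constructed above applies internally there and produces the desired $\BD_{[n+1, n]}$-algebra lift; compatibility with the $\bD$-linear structures and with the infinite twists is checked exactly as in the Poisson case. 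I expect the main obstacle to be precisely this last coherence: establishing the filtered formality of the \emph{two-colored} operad $\BD_{[n+1, n]}$ — equivalently, that the filtered formality of $\BD_n$ and $\BD_{n+1}$ is compatible with two-colored additivity — and then carrying the resulting section through the formal localization machinery compatibly with the $\bD$-linear structure and the $(\infty)$-twists.
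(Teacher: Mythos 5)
Your proposal is correct in substance and follows the same strategy as the paper: the only analytic input is the formality $\BD_n\cong\bP_n\llbracket\hbar\rrbracket$ of Theorem \ref{thm:Enformality}, valid for $n\geq 2$ precisely because the module member of the pair carries a $\bP_n$-structure rather than a $\bP_{n+1}$-structure, and the quantization is produced by transporting $(f^*\cB_X(\infty),\cB_L(\infty))$ along
\[\blmod(\balg_{\bP_n}(\cM))\longrightarrow\blmod(\balg_{\bP_n}(\cM\llbracket\hbar\rrbracket))\cong\blmod(\balg_{\BD_n}(\cM)).\]
The one place where your route diverges from the paper's --- and where you locate ``the main obstacle'' --- is the insistence on a two-colored operad $\BD_{[n+1,n]}$ together with its filtered formality. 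The paper deliberately avoids this: its definition of a deformation quantization of a $\bP_{[n+1,n]}$-algebra is stated directly as an object of $\blmod(\balg_{\BD_n})$, so all that is used is that applying $\blmod(-)$ to the symmetric monoidal equivalence $\balg_{\BD_n}(\cM)\cong\balg_{\bP_n}(\cM\llbracket\hbar\rrbracket)$ again yields an equivalence, which is automatic functoriality. By contrast, an honest operad-level formality statement for a two-colored interpolation between $\C_\bullet(\SC_{n+1})$ and $\bP_{[n+1,n]}$ would collide with the non-formality of the Swiss-cheese operad and the discrepancy of filtrations discussed in the introduction; it is a delicate open question, not a routine consequence of one-colored formality plus additivity. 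Your argument therefore goes through provided you take the $\blmod$-description as the \emph{definition} of $\BD_{[n+1,n]}$-algebras (as you in fact suggest), in which case the obstacle you flag evaporates. A further small correction: formality of $\BD_{n+1}$ is not separately needed --- the $\BD_{n+1}$-structure on the algebra member of the pair is supplied by BD-additivity (Theorem \ref{thm:BDadditivty}), and the only remaining check is that the resulting quantization of $f^*\cB_X(\infty)$ agrees with the pullback of that of $\cB_X(\infty)$, which holds because the quantization functor is $(-)\otimes k\llbracket\hbar\rrbracket$ followed by the formality equivalence and hence commutes with pullback.
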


Let us mention that \cite{Pri4} shows that non-degenerate $0$-shifted coisotropic structures (i.e. $0$-shifted Lagrangians) admit curved deformation quantizations if we again include a certain twist necessary to deal with the first-order obstruction (see also \cite{BGKP}). Moreover, we expect that the results of \cite{Pri4} can be extended to similarly provide quantizations of $1$-shifted Lagrangians thus treating the only remaining case.

One might wonder how the above theorem relates to the statement of the non-formality of the Swiss-cheese operad $\SC_n$ shown by Livernet \cite{Li}. Recall that the operad $\bE_n$ has two filtrations: the cohomological one and the one related to the Poisson operad. These coincide for $n\geq 2$ but differ for $n=1$ and $n=0$. For instance, the cohomological filtration for $\bE_1 \cong \Ass$ is trivial while the Poisson filtration is nontrivial (it is the so-called PBW filtration). Livernet shows that the cohomological filtration on $\C_\bullet(\SC_n)$ is nontrivial for $n\geq 2$. We expect that there is similarly a Poisson filtration on $\C_\bullet(\SC_n)$ whose associated graded is $\bP_{[n, n-1]}$. It is obvious that the two filtrations are different for $n\leq 2$ by looking at the associated gradeds; we expect that the two filtrations moreover differ for all $n$. That is, the Poisson filtration on $\C_\bullet(\SC_n)$ is trivial for $n\geq 3$ while the cohomological filtration is nontrivial. Let us also note a related drastic difference between the operads $\bP_n$ and $\bP_{[n+1, n]}$: while $\bP_n$ admits a minimal model with a quadratic differential, the differential on the minimal model $\tilde{\bP}_{[n+1, n]}$ for $\bP_{[n+1, n]}$ constructed in \cite[Section 3.4]{MS1} has higher terms.

\subsection*{Acknowledgements}

We would like to thank D. Calaque, G. Ginot, M. Porta, B. To\"{e}n and G. Vezzosi for many interesting and stimulating discussions. The work of P.S. was supported by the EPSRC grant EP/I033343/1.
The work of V.M. was partially supported by FIRB2012 "Moduli spaces and applications". We also thank the anonymous referee for his/her useful comments.
\addtocontents{toc}{\protect\setcounter{tocdepth}{2}}

\section{Recollections from part I}

In this section we recall some results from \cite{CPTVV} and \cite{MS1} that will be used in the paper.

\subsection{Notations}

We will use the following notations used in \cite{MS1}:
\begin{itemize}
\item $k$ denotes a field of characteristic zero. $\dg_k$ is the category of cochain complexes of $k$-vector spaces considered with its projective model structure. The standard tensor product on $\dg_k$ makes it a symmetric monoidal model category. The associated symmetric monoidal $\infty$-category will be denoted by $\bdg_k$.

\item We denote by $M$ a symmetric monoidal model dg category as in \cite[Section 1.1]{MS1} and by $\cM$ its underlying $\infty$-category.

\item $M^{gr}$ and $M^{gr,\epsilon}$ denote the model categories of (weight) graded objects in $M$ and of graded mixed objects in $M$ respectively. We refer to \cite[Section 1.2]{MS1} and \cite[Section 1.1]{CPTVV} for more details. The associated $\infty$-categories will be denoted by $\cM^{gr}$ and $\cM^{gr,\epsilon}$ respectively.

\item If $\cC$ is any $\infty$-category, then $\cC^{\sim}$ denotes its $\infty$-groupoid of equivalences. We will also refer to $\cC^{\sim}$ as the underlying space of $\cC$. The $\infty$-category of morphisms in $\cC$ will be denoted by $\Arr(\cC)=\Fun(\Delta^1, \cC)$.

\item Given a dg operad $\cP$, the category of $\cP$-algebras in $M$ is denoted by $\alg_{\cP}(M)$. The $\infty$-category of $\cP$-algebras in $\cM$ is denoted by $\balg_{\cP}(\cM)$. In the case of $\cP$ being the operad $\Ass$ of associative algebras, we will simply use the notation $\alg(M)$ and $\balg(\cM)$. Similarly, for the operad $\Comm$ of commutative algebras we will use $\CAlg(M)$ and $\bCAlg(\cM)$.

\item When considering Lie and Poisson algebras in $M^{gr}$ or $\cM^{gr}$ we will always assume that the bracket is of weight $-1$. In the case $M = \dg_k$, we will use the simpler notations $\alg_{\Lie}^{gr}$ and $\balg_{\Lie}^{gr}$ instead of the full $\alg_{\Lie}(\dg^{gr}_k)$ and $\balg_{\Lie}(\bdg_k^{gr})$.

\item We denote by $\bimod{}{}(\cM)$ the $\infty$-category of triples $(A,B,L)$, where $A,B \in \balg(\cM)$ and $L$ is an $(A,B)$-bimodule. In a similar way, we denote by $\brmod(\cM)$ (resp. $\blmod(\cM)$) the $\infty$-category of pairs $(A,L)$ where $A \in \balg(\cM)$ and $L$ is a right (resp. left) $A$-module.

\item By a derived Artin stack we mean a derived Artin stack locally of finite presentation over $k$.
\end{itemize}

\subsection{Formal localization and Poisson structures on derived stacks}

Let $X$ be a derived Artin stack. Recall that the de Rham stack $X_{DR}$ is defined to be 
\[X_{DR}(A) = X(\rH^0(A)^{red})\]
for any cdga $A$ concentrated in non-positive cohomological degrees. It comes equipped with a projection $q \colon X \to X_{DR}$, whose fibers are formal completions of $X$.

Moreover, there are two naturally defined prestacks of graded mixed commutative algebras on $X_{DR}$, denoted by $\bD_{X_{DR}}$ and $\cB_X$; they are to be thought as derived versions of the crystalline structure sheaf and of the sheaf of principal parts respectively. More precisely, we have the following equivalences of prestacks of commutative algebras on $X_{DR}$:
\[|\bD_{X_{DR}}|\cong \cO_{X_{DR}},\qquad |\cB_X|\cong q_* \cO_X.\]
Just as in the classical case, we have a morphism $\bD_{X_{DR}} \to \cB_X$ that we think of as a $\bD_{X_{DR}}$-linear structure on $\cB_X$.

As functors to the category of graded mixed modules, both $\bD_{X_{DR}}$ and $\cB_X$ admit natural twistings $\bD_{X_{DR}}(\infty)$ and $\cB_X(\infty)$, that are now prestacks of commutative algebras in Ind-objects in the category of graded mixed modules. For details on these constructions, see \cite[Sections 1.5 and 2.4.2]{CPTVV}. Notice that in particular $\cB_X(\infty)$ is a commutative algebra in the category of $\bD_{X_{DR}}(\infty)$-modules. 

With these notations, we can now give the definition of shifted Poisson structures, see \cite[Section 3.1]{CPTVV}.

\begin{defn}
\label{def:poissonstack}
Let $X$ be a derived Artin stack. The \emph{space $\Pois(X,n)$ of n-shifted Poisson structures} on $X$ is the space of lifts of the commutative algebra $\cB_X(\infty)$ in the $\infty$-category of $\bD_{X_{DR}}(\infty)$-modules to a $\bP_{n+1}$-algebra.
\end{defn}

Note that if $X=\Spec A$ is an affine derived scheme, this definition recovers \cite[Definition 4.4]{MS1}. Just as in the affine case, we can give an alternative definition of shifted Poisson structures. First, define the graded $\bP_{n+2}$-algebra of $n$-shifted polyvectors on $X$ to be
\[\bfPol(X,n) := \Gamma(X_{DR}, \bfPol^t (\cB_X / \bD_{X_{DR}},n))\]
where $\bfPol^t (\cB_X / \bD_{X_{DR}},n)$ is the Tate realization of the algebra of shifted $\bD_{X_{DR}}$-linear multiderivations of $\cB_X$. Again, we refer to \cite{CPTVV} for more details on this construction. Using the main theorem of \cite{Me} and its extended version \cite[Theorem 4.5]{MS1} one immediately obtains the following result (see also \cite[Theorem 3.1.2]{CPTVV}).

\begin{thm}
\label{thm:spaceofpoisson}
With notations as above, there is a canonical equivalence of spaces
$$\Pois(X,n) \cong \Map_{\alg^{gr}_{\Lie}}(k(2)[-1], \bfPol(X,n)[n+1]).$$
\end{thm}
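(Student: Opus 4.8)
The plan is to reduce the statement for a general derived Artin stack $X$ to the known affine statement, Theorem 4.5 of \cite{MS1} (the extended version of the main theorem of \cite{Me}), by using the formal localization machinery of \cite{CPTVV}. Recall that both sides of the desired equivalence are built, locally on $X_{DR}$, from data attached to the graded mixed commutative $\bD_{X_{DR}}(\infty)$-algebra $\cB_X(\infty)$. The fibers of $q\colon X\to X_{DR}$ are affine formal derived stacks, and the content of \cite[Section 2]{CPTVV} is that these are controlled by $\cB_X(\infty)$ as a graded mixed algebra; in particular $\Pois(X,n)$ was \emph{defined} in Definition \ref{def:poissonstack} as the space of $\bP_{n+1}$-lifts of $\cB_X(\infty)$ in $\bD_{X_{DR}}(\infty)$-modules, which is already the affine-type description applied fiberwise and then globalized over $X_{DR}$.

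First I would recall the affine input in the precise form needed: for a graded mixed cdga $B$ over a graded mixed cdga $D$ (playing the roles of $\cB_X(\infty)$ and $\bD_{X_{DR}}(\infty)$), the space of $D$-linear $\bP_{n+1}$-structures on $B$ is equivalent to $\Map_{\balg_{\Lie}^{gr}}(k(2)[-1], \bfPol^t(B/D, n)[n+1])$, where $\bfPol^t$ denotes the Tate realization of $D$-linear multiderivations. This is exactly \cite[Theorem 4.5]{MS1} combined with \cite{Me}, and it is the statement one should quote essentially verbatim. The only subtlety at this stage is bookkeeping of the twists $(\infty)$ and of the passage to Ind-objects in graded mixed modules, but since both $\Pois$ and $\bfPol^t$ are defined using the same twisted objects, the affine equivalence applies directly at the level of the relevant prestacks on $X_{DR}$.

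Second, I would globalize. Both $X\mapsto \{\text{$\bP_{n+1}$-lifts of }\cB_X(\infty)\}$ and $X\mapsto \Map_{\balg_{\Lie}^{gr}}(k(2)[-1], \bfPol^t(\cB_X/\bD_{X_{DR}},n)[n+1])$ are obtained by taking $\Gamma(X_{DR}, -)$ of prestacks on $X_{DR}$ — on the one hand the prestack of $\bD_{X_{DR}}(\infty)$-linear $\bP_{n+1}$-structures, on the other hand the mapping-space prestack into $\bfPol^t(\cB_X/\bD_{X_{DR}},n)[n+1]$, whose global sections define $\bfPol(X,n)[n+1]$. The affine theorem furnishes an equivalence of these two prestacks of spaces on (the appropriate site of) $X_{DR}$, functorially in the local affine data; applying the limit $\Gamma(X_{DR},-)$ and commuting it past the mapping space (mapping spaces out of a fixed compact object commute with limits, so $\Gamma(X_{DR},\Map(k(2)[-1],-))\cong \Map(k(2)[-1],\Gamma(X_{DR},-))$) yields the claimed global equivalence. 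Here one uses that $k(2)[-1]$ is a ``small'' graded Lie algebra so that $\Map_{\balg_{\Lie}^{gr}}(k(2)[-1],-)$ preserves the relevant limits, and that the formation of $\bfPol^t$ is compatible with restriction along the étale-type covers used in formal localization.

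The main obstacle, and the step deserving genuine care, is precisely this compatibility of the affine equivalence with the gluing over $X_{DR}$: one must check that the equivalence of \cite[Theorem 4.5]{MS1} is natural enough to descend, i.e. that it is compatible with base change along the maps in the formal localization diagram and with the Tate/Ind-completion functors, so that it assembles into an equivalence of prestacks rather than merely a fiberwise equivalence. This is a ``naturality of a previously-constructed equivalence'' issue rather than a new computation; in practice it follows because the equivalence in \cite{Me,MS1} is constructed by functorial operadic manipulations (identifying $\bP_{n+1}$-structures with Maurer–Cartan elements in a shifted graded dg Lie algebra, then with maps out of $k(2)[-1]$), all of which are stable under the symmetric monoidal functors involved in formal localization. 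Once this is granted, taking global sections over $X_{DR}$ and invoking Theorem \ref{thm:spaceofpoisson}'s affine counterpart concludes the proof.
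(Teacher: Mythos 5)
Your proposal is correct, and the overall strategy --- reduce to the affine statement of \cite[Theorem 4.5]{MS1} via formal localization and then globalize over $X_{DR}$ --- is exactly what the paper intends (the paper offers no proof beyond citing \cite{Me} and \cite[Theorem 4.5]{MS1}). The one organizational difference worth noting concerns the globalization step, which you correctly single out as the delicate point. You glue objectwise: you form a prestack of spaces of Poisson structures and a prestack of mapping spaces on $(\dAff/X_{DR})^{op}$, invoke the affine equivalence fiberwise, and then must verify that this equivalence is natural in the affine input so that it assembles into an equivalence of prestacks before taking $\Gamma(X_{DR},-)$. The paper's route (visible in how the analogous relative statement, Theorem \ref{thm:spaceofcoisotropics2}, is handled) is to apply the affine theorem \emph{once}, in the symmetric monoidal $\infty$-category $\cC_X$ of $\bD_{X_{DR}}(\infty)$-modules in functors to $\mathrm{Ind}(\bdg_k^{gr,\epsilon})$, where $\cB_X(\infty)$ is a single commutative algebra object; since \cite[Theorem 4.5]{MS1} is stated for an arbitrary suitable symmetric monoidal category $\cM$ (and $\cC_X$ qualifies, by \cite[Section 2.3.2]{TV}), the descent/naturality issue you flag simply does not arise, and all that remains is the identification $\bfPol(X,n)\cong\Gamma(X_{DR},\bfPol^{int}(\cB_X(\infty),n))$ built into the definitions, together with the (formal) commutation of $\Map_{\balg_{\Lie}^{gr}}(k(2)[-1],-)$ with the limit $\Gamma(X_{DR},-)$. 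One small correction: that commutation holds because mapping spaces out of \emph{any} fixed object preserve limits in the second variable; compactness of $k(2)[-1]$ is irrelevant here (it would matter for colimits, not limits). Both routes are valid; the paper's buys economy by pushing the functoriality burden onto the generality of the affine theorem, while yours makes the fiberwise content more explicit at the cost of an extra naturality check.
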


We remark that the above theorem is somewhat reassuring, since it gives an expected alternative description of Poisson structures on derived stacks in terms of bivectors.

\begin{example}
Suppose $X$ is a smooth scheme and $n=0$. Then \[\bfPol(X, 0)\cong \Gamma(X, \Sym(\T_X[-1]))\] where $\T_X$ is the tangent bundle of $X$, and the $\bP_2$-structure on the right is given by the Schouten bracket. The completion $\bfPol(X, 0)^{\geq 2}$ in weights at least 2 is concentrated in degree at least 2. Using \cite[Proposition 1.19]{MS1}, we have an equivalence of spaces
\[\Pois(X, 0)\cong \underline{\MC}(\bfPol(X, 0)^{\geq 2}[1]).\]

Since the Lie algebra $\bfPol(X, 0)^{\geq 2}[1]$ is concentrated in degree at least 1, the space of Maurer--Cartan elements is discrete. Its elements correspond to bivectors $\pi_X\in \mathrm{H}^0(X, \wedge^2 \T_X)$ satisfying $[\pi_X, \pi_X]=0$, i.e. we recover the usual notion of a Poisson structure.

The same argument shows that there are no nontrivial $n$-shifted Poisson structures on smooth schemes for $n>0$.
\end{example}

\subsection{Coisotropic structures on algebras}

Recall from \cite[Section 3.4]{MS1} the colored operad $\bP_{[n+1,n]}$ whose algebras in $\cM$ are pairs of objects $(A,B)$ in $\cM$ together with the following additional structure:
\begin{itemize}
\item a $\bP_{n+1}$-structure on $A$;
\item a $\bP_{n}$-structure on $B$;
\item a morphism of $\bP_{n+1}$-algebras $A\to \bZ(B)$, where $\bZ(B)$ is the Poisson center of $B$, considered with its natural structure of a $\bP_{n+1}$-algebra in $\cM$.
\end{itemize}

Recall also that there is a canonical morphism of commutative algebras $\bZ(B)\to B$. The fiber of this map is denoted by $\bDef(B)[-n]$, and we have a fiber sequence
\[B[-1] \lra \bDef(B)[-n] \lra \bZ(B)\]
of non-unital $\bP_{n+1}$-algebras (see \cite[Section 3.5]{MS1}). In particular, it follows that if $(A,B)$ is a $\bP_{[n+1,n]}$-algebra, then the fiber $\bU(A,B)$ of the underlying morphism of commutative algebras $A \to B$ fits in the Cartesian square
\[
\xymatrix{
\bU(A,B) \ar[r] \ar[d] & A \ar[d] \\
\bDef(B)[-n] \ar[r] & \bZ(B) \\
}
\]
of non-unital $\bP_{n+1}$-algebras in $\cM$. In particular, we obtain a forgetful functor
\[\balg_{\bP_{[n+1, n]}}\longrightarrow \balg_{\bP^{\mathrm{nu}}_{n+1}}.\]

Using the morphism of commutative algebras $\bZ(B)\to B$, we also get a natural forgetful map $$\phi \colon \  \balg_{\bP_{[n+1,n]}}(\cM)^{\sim} \to \mathrm{Arr}(\bCAlg(\cM))^{\sim}$$
to the $\infty$-groupoid of morphisms of commutative algebras, sending a $\bP_{[n+1,n]}$-algebra $(A,B)$ to the underlying map $A\to B$. This forgetful functor was used in \cite[Section 4.3]{MS1} to define coisotropic structures on a morphism $f\colon A \to B$ in $\bCAlg(\cM)$. The following is \cite[Definition 4.12]{MS1}.

\begin{defn}\label{defn:cois}
Let $f\colon A\to B$ be a map of commutative algebra objects in the $\infty$-category $\cM$. The \emph{space $\Cois(f, n)$ of $n$-shifted coisotropic structures on $f$} is the fiber of the forgetful functor $\phi$, taken at $f$.
\end{defn}

We also have an alternative operadic description of the space $\Cois(f,n)$. The following is \cite[Theorem 2.22]{Sa2}.
\begin{thm}\label{thm:additivity}
Let $\cM$ be a symmetric monoidal dg category satisfying our starting assumption. Then there is an equivalence of $\infty$-categories
$$\balg_{\bP_{n+1}}(\cM) \simeq \balg(\balg_{\bP_n}(\cM)).$$
\end{thm}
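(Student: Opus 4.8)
The plan is to prove a Poisson analogue of Deligne's conjecture and use it to construct explicit functors in both directions, rather than trying to deduce the statement from Dunn--Lurie additivity $\bE_{n+1}\simeq \bE_1\otimes_{\mathrm{BV}}\bE_n$: the latter does not descend to the Poisson (PBW) associated graded that produces $\bP_n$ from $\bE_n$, since the Boardman--Vogt tensor product is not compatible with that filtration. Instead, the input is a ``$\bP_{n+1}$-brace structure'' on the Poisson deformation complex, and the equivalence is then obtained by twisting and comparing associated graded pieces.

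\emph{A $\bP_{n+1}$-brace structure on the Poisson center.} For a $\bP_n$-algebra $B$ in $\cM$, recall the Poisson deformation complex $\bDef(B)$, a shift of the Poisson center $\bZ(B)$, which controls deformations of the $\bP_n$-structure on $B$. The central step is to equip the pair $(B,\bDef(B))$ with a natural structure of an algebra over a cofibrant model of $\bP_{n+1}$ (refining the two-colored datum $\bZ(B)\to B$ used in defining $\bP_{[n+1,n]}$): the commutative product on $\bZ(B)$ is the product of multiderivations, the degree $-n$ bracket is the Poisson analogue of the Gerstenhaber bracket, and the higher brace operations --- insertions of multiderivations into the bracket slots --- encode the homotopies making this into a coherent $\bP_{n+1}$-algebra. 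This must be carried out functorially in $B$, i.e. as an honest map from a cofibrant resolution of the relevant (colored) operad into the endomorphism operad of $(B,\bDef(B))$, generalizing the Tamarkin and McClure--Smith treatments of the $\bE_n$ Deligne conjecture.

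\emph{The two functors.} The category $\balg_{\bP_n}(\cM)$ is symmetric monoidal under the pointwise tensor product, so an associative algebra object therein is a $\bP_n$-algebra $B$ with a second, associative multiplication $m\colon B\otimes B\to B$ which is a map of $\bP_n$-algebras; by the universal property of the Poisson center, such an $m$ is the same as a Maurer--Cartan-type element of a suitable shift of $\bDef(B)$. Twisting the brace structure of the previous step by this element and evaluating the brace operations on $m$ endows the object $B$ with a $\bP_{n+1}$-algebra structure; functoriality and rectification of the resulting $\infty$-algebra yield a functor $\balg(\balg_{\bP_n}(\cM))\to \balg_{\bP_{n+1}}(\cM)$. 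Conversely, a $\bP_{n+1}$-algebra $A$ has an underlying $\bP_n$-algebra (the commutative product, with trivial bracket), and the rest of its structure is recorded by a canonical morphism to the Poisson center which, by the brace identities, is exactly a Maurer--Cartan element as above, hence an associative-algebra-object structure; this is the functor in the other direction.

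\emph{The comparison.} It remains to check that the unit and counit of the resulting adjunction are equivalences. Both sides carry a natural filtration by ``polynomial degree'' --- the PBW filtration on $\balg_{\bP_{n+1}}(\cM)$ and the tensor-length (bar) filtration on the other side --- both functors are filtered, and on associated graded the comparison reduces to an explicit and essentially formal identification, which one verifies on free algebras where both sides compute the same object assembled from $\Sym$ and shifts. The main obstacle is the brace construction: producing the $\bP_{n+1}$-structure on the Poisson deformation complex in a homotopy-coherent, functorial way --- an actual operad map out of a cofibrant resolution --- and checking that it respects the filtrations used in the comparison. Granting that, the twisting arguments and the associated-graded identification are bookkeeping.
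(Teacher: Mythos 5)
First, a point of order: the paper does not actually prove this statement --- Theorem \ref{thm:additivity} is imported verbatim from \cite[Theorem 2.22]{Sa2} --- so the only meaningful comparison is with the proof given there. Your strategic instincts are good and match that source: you are right that Dunn--Lurie additivity cannot simply be transported along the Poisson/PBW associated graded (formality is unavailable for $n\le 1$ and non-canonical for $n\ge 2$), and the argument of \cite{Sa2} does run through a Poisson analogue of the Deligne conjecture realized by a brace construction. The problem is that both load-bearing steps of your outline are either deferred or misstated, and they are exactly where the theorem lives, so what you have is a road map rather than a proof.

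Concretely: (1) the coherent, functorial $\bP_{n+1}$-structure on the pair $(\bZ(B),B)$ is the hard part, as you acknowledge; but in \cite{Sa2} it is not produced by mapping a cofibrant resolution of a colored operad into an endomorphism operad in the style of Tamarkin or McClure--Smith. Instead one passes to the Koszul dual side: $\bP_n$-algebras are traded for ind-conilpotent coalgebras over the dual Hopf cooperad, the brace construction is applied there, and a separate operadic computation identifies the resulting brace operad with $\bP_{n+1}$; the coherence you worry about is obtained structurally from bar--cobar duality rather than built by hand. (2) The step ``an associative multiplication $m$ on $B$ compatible with the $\bP_n$-structure is, by the universal property of the Poisson center, a Maurer--Cartan element of a shift of $\bDef(B)$'' is incorrect as stated, and it is the crux. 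The universal property of $\bZ(B)$ classifies \emph{left actions} of $\bP_{n+1}$-algebras on $B$, i.e.\ maps $A\to\bZ(B)$; an associative algebra object in $\balg_{\bP_n}(\cM)$ is a homotopy-coherent ($A_\infty$-type) structure on $B$ itself, and $B$ is not a module over itself until the $\bP_{n+1}$-structure you are trying to construct already exists --- as written the step is circular, and $\bDef(B)$ (which controls deformations of the $\bP_n$-structure) is being conflated with the convolution-type brace algebra that actually classifies the associative direction. Identifying the two is the content of the theorem, not a reformulation of it. Finally, the ``bookkeeping'' comparison on associated gradeds is not free either: the coproduct in $\balg_{\bP_n}(\cM)$ is not the underlying tensor product, so matching free associative algebra objects on free $\bP_n$-algebras against free $\bP_{n+1}$-algebras is itself a nontrivial operadic identity, essentially equivalent to the brace-operad computation above.
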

Thanks to the above result one can think of a $\bP_{n+1}$-algebra as an associative algebra in the $\infty$-category of $\bP_n$-algebras. In particular, this allows us to obtain the following important theorem, which is \cite[Corollary 3.8]{Sa2} (see also \cite[Theorem 4.14]{MS1}), which gives an alternative characterization of coisotropic structures.

\begin{thm}
There is a commutative diagram of $\infty$-categories
\[
\xymatrix{
\balg_{\bP_{[n+1, n]}}(\cM) \ar^{\sim}[rr] \ar[dr] && \blmod(\balg_{\bP_n}(\cM)) \ar[dl] \\
& \Arr(\bCAlg(\cM))
}
\]
\end{thm}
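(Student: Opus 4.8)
The plan is to combine the additivity equivalence of Theorem \ref{thm:additivity} with the operadic description of $\bP_{[n+1,n]}$-algebras. By Theorem \ref{thm:additivity} we may view $\balg_{\bP_{n+1}}(\cM)$ as $\balg(\balg_{\bP_n}(\cM))$, the $\infty$-category of associative algebra objects in $\bP_n$-algebras. The three pieces of data defining a $\bP_{[n+1,n]}$-algebra $(A,B)$ are: a $\bP_{n+1}$-structure on $A$, a $\bP_n$-structure on $B$, and a morphism of $\bP_{n+1}$-algebras $A\to \bZ(B)$. One should recognize this last datum as precisely an action of $A$ on $B$ in the sense of modules: indeed the Poisson center $\bZ(B)$ plays the role of the endomorphism object of $B$ in the monoidal $\infty$-category $\balg_{\bP_n}(\cM)$, so a $\bP_{n+1}$-algebra map $A\to \bZ(B)$ is the same as a left $A$-module structure on $B$ internal to $\balg_{\bP_n}(\cM)$, i.e. an object of $\blmod(\balg_{\bP_n}(\cM))$. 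This identification is exactly the content of \cite[Corollary 3.8]{Sa2} / \cite[Theorem 4.14]{MS1}, which I would simply cite for the top horizontal equivalence.

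The remaining work is to check that the resulting square commutes, i.e. that both ways of passing from $\balg_{\bP_{[n+1,n]}}(\cM)$ to $\Arr(\bCAlg(\cM))$ agree. First I would spell out the left diagonal: given $(A,B)$, forget the Poisson structures down to commutative algebras and take the underlying morphism $A\to B$; but the underlying map $A\to B$ is the composite $A\to\bZ(B)\to B$ of the structure map with the canonical augmentation $\bZ(B)\to B$ of the Poisson center. This is precisely the map $\phi$ recalled above. Then I would spell out the right diagonal $\blmod(\balg_{\bP_n}(\cM))\to\Arr(\bCAlg(\cM))$: given a $\bP_n$-algebra $B$ with an action of an associative algebra object $A$, pass to underlying commutative algebras and send the pair to the action map $A\otimes B\to B$ precomposed with the unit, which yields $A\to B$; more precisely the $\infty$-category $\blmod$ always admits a functor to $\Arr$ of the ambient category by sending $(A,B)$ to $A\cdot 1_B\to B$, and here we postcompose with the forgetful functor $\balg_{\bP_n}(\cM)\to\bCAlg(\cM)$. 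The commutativity then reduces to the compatibility of the equivalence $\balg_{\bP_{n+1}}\simeq\balg(\balg_{\bP_n})$ with the respective forgetful functors to $\bCAlg(\cM)$, together with the identification of the center-to-algebra map with the module action map; both are already established in \cite{Sa2}.

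The main obstacle is purely bookkeeping: making precise what the functor $\blmod(\balg_{\bP_n}(\cM))\to\Arr(\bCAlg(\cM))$ is and verifying it is the one making the triangle commute, since the left $A$-module structure on $B$ records more than the bare morphism $A\to B$ and one must check nothing is lost or added when projecting. Concretely one needs that, under additivity, the forgetful functor $\balg_{\bP_n}(\cM)\to\cM$ refines to a lax monoidal functor, so that it carries associative algebras to associative algebras and left modules to left modules, and that the induced map on underlying objects of $\bZ(B)\to B$ matches the unit-composed action map. All of these are consequences of the constructions in \cite{Sa2} and \cite[Sections 3.4--3.5]{MS1}, so in the write-up I would reduce the statement to invoking those and checking the triangle on the level of the underlying morphism, which is the content of the displayed Cartesian square for $\bU(A,B)$ together with the factorization $A\to\bZ(B)\to B$ already recorded above.
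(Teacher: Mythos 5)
Your proposal is correct and matches the paper's treatment: this theorem is stated in the paper as a recollection, with the equivalence and the commutativity of the triangle both attributed to \cite[Corollary 3.8]{Sa2} (see also \cite[Theorem 4.14]{MS1}), which is exactly what your argument reduces to. The additional material you supply --- the Poisson center $\bZ(B)$ as the endomorphism object of $B$ in $\balg_{\bP_n}(\cM)$ under additivity, and the identification of the composite $A\to\bZ(B)\to B$ with the unit-composed action map --- is a faithful outline of how that cited result is established, so nothing further is needed.
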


We immediately get the following corollary about coisotropic structures.

\begin{cor}\label{cor:coiscptvv}
Let $f\colon A \to B$ be a morphism in $\bCAlg(\cM)$. Then the space $\Cois(f,n)$ is equivalent to the fiber of
$$\blmod(\balg_{\bP_n}(\cM))^{\sim} \lra \Arr(\bCAlg(\cM))^{\sim}$$
taken at $f$.
\end{cor}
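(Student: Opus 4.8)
This corollary is a formal consequence of Definition \ref{defn:cois} and the commutative triangle in the preceding theorem (equivalently \cite[Corollary 3.8]{Sa2}, see also \cite[Theorem 4.14]{MS1}). The plan is as follows. By Definition \ref{defn:cois}, the space $\Cois(f,n)$ is the fiber at $f$ of the map of spaces $\phi\colon \balg_{\bP_{[n+1, n]}}(\cM)^{\sim}\to \Arr(\bCAlg(\cM))^{\sim}$, which is obtained by passing to underlying $\infty$-groupoids in the forgetful functor $\balg_{\bP_{[n+1, n]}}(\cM)\to \Arr(\bCAlg(\cM))$.

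First I would apply the functor $(-)^{\sim}\colon \mathrm{Cat}_\infty\to \mathcal{S}$ of underlying $\infty$-groupoids to the commutative triangle of the preceding theorem. Since this functor sends equivalences of $\infty$-categories to equivalences of spaces and preserves commutativity of diagrams, one obtains a commutative triangle of spaces
\[
\xymatrix{
\balg_{\bP_{[n+1, n]}}(\cM)^{\sim} \ar^-{\sim}[rr] \ar[dr] && \blmod(\balg_{\bP_n}(\cM))^{\sim} \ar[dl] \\
& \Arr(\bCAlg(\cM))^{\sim}
}
\]
in which the top horizontal map is an equivalence and the left diagonal map is $\phi$. I would then invoke the standard fact that an equivalence of spaces over a fixed base space induces an equivalence on homotopy fibers over every point. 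Applying this over the point $f\in \Arr(\bCAlg(\cM))^{\sim}$ identifies the fiber of $\phi$ at $f$, which is $\Cois(f,n)$ by definition, with the fiber at $f$ of $\blmod(\balg_{\bP_n}(\cM))^{\sim}\to \Arr(\bCAlg(\cM))^{\sim}$, which is exactly the claimed statement.

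There is essentially no obstacle: all the substance is contained in the preceding theorem, and the corollary is purely a matter of passing to underlying groupoids and taking fibers. The only point deserving (routine) care is that the two forgetful maps to $\Arr(\bCAlg(\cM))$ genuinely agree after transport along the equivalence $\balg_{\bP_{[n+1, n]}}(\cM)\simeq \blmod(\balg_{\bP_n}(\cM))$ — but this is precisely the commutativity of the triangle in the preceding theorem, so nothing further is needed.
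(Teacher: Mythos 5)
Your proposal is correct and is exactly the argument the paper intends: the corollary is stated as an immediate consequence of the preceding theorem, obtained by passing to underlying $\infty$-groupoids in the commutative triangle and comparing fibers over $f$, with the left diagonal map being the functor $\phi$ from Definition \ref{defn:cois}. Nothing further is needed.
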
 
In other words, consider $f\colon A \to B$ in $\bCAlg(\cM)$. Then a $n$-shifted coisotropic structure on $f$ amounts to a $\bP_{n+1}$-structure on $A$, a $\bP_n$-structure on $B$ and an action of $A$ on $B$ as $\bP_n$-algebras, all compatible with the given commutative morphism $f$.

We end this section by recalling a third important characterization of the space of coisotropic structures on algebras.
Let again $f\colon A \to B$ be a morphism in $\bCAlg(\cM)$. It induces a natural morphism of graded commutative algebras
\[\bfPol(A,n) \lra \bfPol(B/A,n-1)\]
between algebras of shifted polyvectors. In \cite[Section 4.2]{MS1} we showed that the pair $(\bfPol(A, n), \bfPol(B/A, n-1))$ becomes a $\bP_{[n+2, n+1]}$-algebra and we denote the underlying non-unital $\bP_{n+2}$-algebra by $\bfPol(f, n)$. We also have its internal version, i.e. there is an $\bP_{[n+2, n+1]}$-algebra $(\bfPol^{int}(A, n), \bfPol^{int}(B/A, n-1))$ in $\cM$ and we denote its underlying non-unital $\bP_{n+2}$-algebra in $\cM$ by $\bfPol^{int}(f, n)$. The following is \cite[Theorem 4.15]{MS1}.

\begin{thm}\label{thm:coispolyvectors}
With notations as above there is an equivalence of spaces
\[\Cois(f,n) \simeq \Map_{\alg_{\Lie}^{gr}}(k(2)[-1], \bfPol(f,n)[n+1]).\]
\end{thm}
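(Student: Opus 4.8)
The plan is to reduce the statement to its affine counterpart, which is Theorem~\ref{thm:coispolyvectors} (equivalently \cite[Theorem 4.15]{MS1}), using the formal localization formalism of \cite{CPTVV}. The key observation is that both sides of the claimed equivalence are defined by gluing affine-local data over the de Rham stack $X_{DR}$, so the equivalence should follow by patching the affine equivalence, provided one checks that all constructions involved are suitably functorial and compatible with the graded mixed structures.

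First I would set up the relative version of the formal localization picture. For a morphism $f\colon L\to X$ of derived Artin stacks we obtain a compatible system of de Rham stacks and projections, and the pair $(\cB_X(\infty), \cB_{L/X}(\infty))$ — or more precisely the appropriate prestacks of graded mixed commutative algebras on $X_{DR}$ encoding $q_*\cO_X$ together with the relative structure sheaf — plays the role of the pair $(A,B)$ in the affine theory. The space $\Cois(f,n)$ (Definition~\ref{def:generalcoisotropics}) is by construction the space of lifts of this pair along the forgetful functor from $\blmod(\balg_{\bP_n})$-type data relative to $\bD_{X_{DR}}(\infty)$ down to commutative algebras, exactly mirroring Corollary~\ref{cor:coiscptvv}. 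Second, I would define $\bfPol(f,n)$ on the stacky level as the global sections over $X_{DR}$ of the Tate realization of the $\bD_{X_{DR}}$-linear relative multiderivation $\bP_{[n+2,n+1]}$-algebra $(\bfPol^t(\cB_X/\bD_{X_{DR}},n), \bfPol^t(\cB_{L/X}/\bD_{X_{DR}}, n-1))$, extending the definition of $\bfPol(X,n)$ recalled before Theorem~\ref{thm:spaceofpoisson}. One then invokes the affine Theorem~\ref{thm:coispolyvectors} fiberwise, together with the main theorem of \cite{Me} and \cite[Theorem 4.5]{MS1} as used in the proof of Theorem~\ref{thm:spaceofpoisson}, to identify the space of coisotropic structures with the mapping space $\Map_{\balg_{\Lie}^{gr}}(k(2)[-1], -[n+1])$ applied to the relative polyvectors, and then take global sections over $X_{DR}$.

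The main obstacle I anticipate is the commutation of the functor $\Map_{\balg_{\Lie}^{gr}}(k(2)[-1], -)$ with the global sections (limit) over $X_{DR}$, together with the compatibility of Tate realization with this limit. Concretely, one must show that forming the space of Maurer--Cartan-type lifts commutes with the homotopy limit defining $\Gamma(X_{DR}, -)$, which requires some control over convergence of the relevant (co)filtrations — this is precisely the role of the Tate twists $\cB_X(\infty)$, $\bD_{X_{DR}}(\infty)$ and is handled in \cite{CPTVV} for the absolute Poisson case; I would adapt that argument to the relative two-colored setting. A secondary technical point is checking that the $\bP_{[n+2,n+1]}$-algebra structure on relative polyvectors, constructed affine-locally in \cite[Section 4.2]{MS1} via the morphism $\bL_{B/A}\to \bL_A\otimes_A B[1]$, glues: this amounts to the functoriality of that construction in the pair $(A,B)$, which is essentially formal but must be stated carefully in the $\infty$-categorical language. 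Once these two points are in place, the equivalence follows by assembling the affine equivalences, and the naturality statements needed elsewhere in the paper come for free from the construction.
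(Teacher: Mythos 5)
There is a genuine problem: the statement you were asked to prove is the \emph{affine} one. Theorem \ref{thm:coispolyvectors} concerns a morphism $f\colon A\to B$ of commutative algebras in a symmetric monoidal category $\cM$ --- the ``notations as above'' refer to the algebraic relative polyvectors $\bfPol(f,n)$ of \cite[Section 4.2]{MS1} --- and in this paper it is simply recalled from \cite[Theorem 4.15]{MS1}, not reproved. Your proposal opens by reducing the statement to ``its affine counterpart, which is Theorem \ref{thm:coispolyvectors}'', i.e.\ to the very statement under consideration, so as written the argument is circular. What the affine statement actually requires has nothing to do with formal localization: one must compare the space of lifts of $f$ along $\blmod(\balg_{\bP_n}(\cM))\to\blmod(\bCAlg(\cM))$ (equivalently, of $\bP_{[n+1,n]}$-structures on the pair $(A,B)$, via Corollary \ref{cor:coiscptvv}) with $\Map_{\alg_{\Lie}^{gr}}(k(2)[-1],\bfPol(f,n)[n+1])$. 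In \cite{MS1} this is done by working with the cofibrant model $\tilde\bP_{[n+1,n]}$ of the two-colored operad, identifying the resulting deformation complex of the pair $(A,B)$ with the relative polyvectors (this is where the $\bP_{[n+2,n+1]}$-structure on $(\bfPol(A,n),\bfPol(B/A,n-1))$ and the fiber $\bU(-,-)$ enter), and running the Maurer--Cartan comparison of \cite{Me} in the two-colored setting. None of these ingredients appears in your outline.

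If what you intended to prove is the stacky generalization, Theorem \ref{thm:spaceofcoisotropics2}, your outline is in the right spirit but diverges from the paper precisely at the point you flag as the main obstacle. The paper never glues the equivalence affine-locally over the de Rham stack and never needs to commute $\Map_{\alg_{\Lie}^{gr}}(k(2)[-1],-)$ past the limit $\Gamma(X_{DR},-)$: both $\Cois(f,n)$ (Definition \ref{def:generalcoisotropics}) and $\bfPol(f,n)$ are \emph{defined} as pullbacks --- of spaces, respectively of graded non-unital $\bP_{n+2}$-algebras --- over the corners $\Pois(f^*\cB_X(\infty),n)$ and $\bfPol(f^*\cB_X(\infty),n)$. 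One then applies the affine theorem a single time, in the $\infty$-category $\cC_L$ of $\bD_{L_{DR}}(\infty)$-modules, to the one morphism $f^*_{\cB}(\infty)$, applies Theorem \ref{thm:spaceofpoisson} to the remaining corners, and concludes because the mapping space functor preserves pullbacks. The convergence and gluing issues you anticipate are thereby absorbed into results already established for $\cC_L$, rather than handled by a fiberwise argument over $X_{DR}$.
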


\section{Coisotropic structures on derived stacks}

In this section we generalize definitions of coisotropic structures from the affine case to the case of general stacks.

\subsection{Definitions}

Let $f\colon L \to X$ be a map of derived Artin stacks. The map $f$ descends to a map between the de Rham stacks $f_{DR} \colon L_{DR} \to X_{DR}$, which in turn induces a pullback functor (simply denoted by $f^*$, with a slight abuse of notation) from prestacks on $X_{DR}$ to prestacks on $L_{DR}$. The functor $f^*$ is just the precomposition of prestacks with $f$.

By definition of $\bD_{X_{DR}}$, one immediately gets an equivalence $\bD_{L_{DR}} \cong f^* \bD_{X_{DR}}$. As for the sheaves of principal parts, $f$ induces a natural algebra map
\[f^*_{\cB} \colon f^*\cB_X \to \cB_L\]
preserving the $\bD_{L_{DR}}$-linear structures. It follows that there exists an induced morphism
\[ f^*_{\cB}(\infty) \colon \ f^*\cB_X(\infty) \to \cB_L(\infty)\]
of $\bD_{L_{DR}}(\infty)$-algebras. 

Let us denote by $\cC_X$ the $\infty$-category of $\bD_{X_{DR}}(\infty)$-modules in the $\infty$-category of functors 
$$(\mathrm{dAff}/X_{DR})^{op} \longrightarrow \mathrm{Ind}(\bdg_k^{gr,\epsilon}).$$
Similarly, we let $\cC_L$ be the $\infty$-category of $\bD_{L_{DR}}(\infty)$-modules in the category of functors 
$$(\mathrm{dAff}/L_{DR})^{op} \longrightarrow \mathrm{Ind}(\bdg_k^{gr, \epsilon}).$$
By the above discussion, the map $f^*_{\cB}(\infty)$ is naturally a morphism of commutative algebras in $\cC_L$.

Moreover, $f$ induces a symmetric monoidal pullback $\infty$-functor $\cC_X \to \cC_L$, so that in particular there is a well defined functor
$$\balg_{\bP_{n+1}}(\cC_X) \longrightarrow \balg_{\bP_{n+1}}(\cC_L).$$
For example, suppose that $X$ is endowed with a $n$-shifted Poisson structure. This corresponds to a $\bP_{n+1}$-structure on $\cB_X(\infty)$ in the $\infty$-category $\cC_{X}$, so that $f^*\cB_X(\infty)$ becomes a $\bP_{n+1}$-algebra in $\cC_L$. This means that there is an induced map of spaces
$$\Pois(X,n) \longrightarrow \Pois(f^*\cB_X(\infty), n),$$
where $\Pois(f^*\cB_X(\infty),n)$ is the space of compatible $\bP_{n+1}$-structures on $f^*\cB_X(\infty)$, viewed as an element in $\bCAlg(\cC_L)$.

\begin{defn}\label{def:generalcoisotropics}
Let $f\colon L\to X$ be a map of derived Artin stacks. The \emph{space $\Cois(f,n)$ of $n$-shifted coisotropic structures on $f$} is the fiber product
$$\xymatrix{
\Cois(f,n) \ar[r]\ar[d] & \Pois(X,n) \ar[d] \\
\Cois(f^*_{\cB}(\infty),n) \ar[r] & \Pois(f^*\cB_X(\infty),n)
}$$
where $\Cois(f^*_{\cB}(\infty),n)$ is the space of coisotropic structures on $f^*_{\cB}(\infty)$ in the sense of Definition \ref{defn:cois}.
\end{defn}

\begin{remark}
Our definition is equivalent to the one given in \cite[Section 3.4]{CPTVV} if one identifies $\bP_{[n+1, n]}$-algebras with $\bP_{(n+1, n)}$-algebras following \cite[Theorem 3.7]{Sa2}.
\end{remark}

Recall from \cite[Section 4.3]{MS1} that if $\cM$ is a nice enough symmetric monoidal $\infty$-category, then for every $f\colon A\to B$ in $\bCAlg(\cM)$ the space $\Cois(f,n)$ comes equipped with two natural forgetful maps
\[
\xymatrix{
& \Cois(f, n) \ar[dl] \ar[dr] & \\
\Pois(B, n-1) && \Pois(A, n).
}
\]

In particular, it follows that for every map $f\colon L \to X$ of derived Artin stacks we have a similar correspondence of spaces
\[
\xymatrix{
& \Cois(f, n) \ar[dl] \ar[dr] & \\
\Pois(L, n-1) && \Pois(X, n).
}
\]

\subsection{Relative polyvectors for derived stacks}

In this subsection we give an alternative definition of coisotropic structures on a morphism of derived Artin stacks using the notion of relative polyvectors. The goal is to prove an analogue of Theorem \ref{thm:coispolyvectors} in the more general case of derived stacks. 

For every map of derived stacks $f\colon L \to X$ the morphism
\[f^*_{\cB}\colon f^*\cB_X \to \cB_L\]
we introduced in the previous subsection is a map of commutative algebras in the $\infty$-category of $\bD_{L_{DR}}$-modules. Similarly, after twisting we get a morphism of commutative algebras
\[f^*_{\cB}(\infty) \colon f^*\cB_X(\infty) \to \cB_L(\infty)\]
in the $\infty$-category $\cC_L$ of $\bD_{L_{DR}}(\infty)$-modules. In particular, we can consider its algebra of $n$-shifted relative polyvectors. That is, we have a graded $\bP_{[n+2, n+1]}$-algebra
\[(\bfPol^{int}(f^*\cB_X(\infty), n), \bfPol^{int}(\cB_L(\infty)/f^*\cB_X(\infty), n-1))\]
in $\cC_L$.

\begin{remark}
	The fact that the $\infty$-category $\cC_L$ can be presented as a model category (and thus that we can use the constructions and the results of \cite{MS1}) is a consequence of \cite[Section 2.3.2]{TV}. Alternatively, one can observe that in this particular case polyvectors are in fact functorial, and hence can be defined objectwise (see \cite[Remark 2.4.8]{CPTVV} for more details). 
\end{remark}

We have a graded commutative algebra
\[\bfPol(L/X, n) = \Gamma(L, \Sym_{\cO_L}(\bT_{L/X}[-n-1]))\]
as in \cite[Definition 2.3.7]{CPTVV}. The following lemma is a straightforward consequence of formal localization, as studied in \cite[Section 2]{CPTVV}.

\begin{lm}
There is an equivalence of graded commutative cdgas
\[\bfPol(L/X, n)\cong \Gamma(L_{DR}, \bfPol^{int}(\cB_L(\infty)/f^*\cB_X(\infty), n-1)).\]
\end{lm}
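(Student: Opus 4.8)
The plan is to reduce the statement to the corresponding computation on affine pieces, where the relevant polyvectors have already been described in \cite{MS1} and \cite{CPTVV}, and then use the formal localization formalism to glue. More precisely, I would first recall that by \cite[Section 2]{CPTVV} the prestack $\cB_L$ on $L_{DR}$ is, Zariski-locally on $L_{DR}$, of the form $\bD_{\Spec A}$-linear principal parts of an affine derived scheme $\Spec A$, and similarly $f^*\cB_X$ is computed from an affine map $\Spec A \to \Spec A'$ covering $f$; on such a chart the relative polyvectors $\bfPol^{int}(\cB_L(\infty)/f^*\cB_X(\infty), n-1)$ are, after Tate realization, identified with $\Hom_B(\Sym_B(\bL_{B/A}[n]), B)$ as recalled in the introduction of the present paper, where $B$ and $A$ are the relevant cdgas. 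This matches, objectwise, the graded commutative algebra $\Sym_{\cO_L}(\bT_{L/X}[-n-1])$ appearing in $\bfPol(L/X, n)$, once one unwinds $\bT_{L/X} \cong \bL_{B/A}^\vee$ and the degree shift $[-n-1]$ versus $[n]$ (note $\Hom_B(\Sym_B(\bL_{B/A}[n]), B) \cong \Sym_B(\bT_{B/A}[-n-1])$ when $\bL_{B/A}$ is perfect, which holds since we are working with derived Artin stacks locally of finite presentation).

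Next I would explain that the equivalence is functorial in the affine chart: the identification above is compatible with restriction maps because it is induced by the canonical comparison between the de Rham / principal parts construction and the cotangent complex, which is itself functorial by \cite[Section 2]{CPTVV}. Hence the objectwise equivalences assemble into an equivalence of prestacks of graded commutative cdgas on $L_{DR}$, and taking global sections $\Gamma(L_{DR}, -)$ gives
\[\Gamma(L_{DR}, \bfPol^{int}(\cB_L(\infty)/f^*\cB_X(\infty), n-1)) \cong \Gamma(L_{DR}, q_{L,*}\Sym_{\cO_L}(\bT_{L/X}[-n-1])),\]
where $q_L \colon L \to L_{DR}$ is the projection. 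Since $\Gamma(L_{DR}, q_{L,*}(-)) = \Gamma(L, -)$ and $\bT_{L/X}$ is perfect so that $\Sym_{\cO_L}(\bT_{L/X}[-n-1])$ is perfect and its global sections commute with the relevant realizations, the right-hand side is exactly $\bfPol(L/X, n)$ as in \cite[Definition 2.3.7]{CPTVV}. This is the content of the claim. I would also point to the fact, recorded in the remark preceding the lemma, that in this situation polyvectors are functorial and can be computed objectwise, which is precisely what makes the gluing step harmless.

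The step I expect to require the most care is the matching of the two twisting conventions and realizations: on the $\cB_L$ side one works inside $\mathrm{Ind}(\bdg_k^{gr,\epsilon})$ with the twist $(\infty)$ and then applies the Tate realization, while on the other side one has the naive graded algebra $\Sym_{\cO_L}(\bT_{L/X}[-n-1])$. One must check that, after formal localization, the graded mixed structure on $\bfPol^{int}(\cB_L(\infty)/f^*\cB_X(\infty), n-1)$ together with the Tate twist reproduces exactly the un-mixed symmetric algebra on the shifted relative tangent complex, with no extra correction. This is essentially the relative version of \cite[Proposition 2.4.9 / Remark 2.4.8]{CPTVV}, so I would either cite that directly or reproduce its short argument in the relative setting; everything else is bookkeeping with perfectness and the compatibility of $\Gamma(L_{DR}, -)$ with the constructions involved.
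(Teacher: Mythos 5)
Your argument is essentially the paper's own: the lemma is stated there without proof, as "a straightforward consequence of formal localization," and your expansion — identify the relative polyvectors objectwise on $A$-points of $L_{DR}$ with the symmetric algebra on the shifted relative tangent complex, check functoriality, and pass to global sections, with the only delicate point being the matching of Tate twists/realizations à la \cite[Remark 2.4.8]{CPTVV} — is exactly the intended route. (Minor quibble: the reduction is to arbitrary $A$-points $\Spec A \to L_{DR}$ in the sense of formal localization rather than to a Zariski cover, as you in fact use later when you say "objectwise.")
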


The morphism of $\bP_{n+2}$-algebras
\[\bfPol(X,n) \cong \Gamma(X_{DR}, \bfPol^{int}(\cB_X(\infty), n)) \lra \Gamma(L_{DR}, \bfPol^{int}(f^*\cB_X(\infty),n))\]
moreover induces a graded $\bP_{[n+2, n+1]}$-algebra structure on the pair
\[(\bfPol(X, n), \bfPol(L/X, n-1)).\]

\begin{defn}
Let $f\colon L\rightarrow X$ be a morphism of derived Artin stacks. The \emph{algebra of relative $n$-shifted polyvectors} is the graded non-unital $\bP_{n+2}$-algebra
\[\bfPol(f, n) = \bU(\bfPol(X, n), \bfPol(L/X, n-1)).\]
\end{defn}

\begin{prop}
For a morphism $f\colon L\rightarrow X$ of derived Artin stacks there is a fiber sequence of graded non-unital $\bP_{n+2}$-algebras
\[\bfPol(L/X, n-1)[-1]\longrightarrow \bfPol(f, n)\longrightarrow \bfPol(X, n).\]

The connecting homomorphism $\bfPol(X, n)\rightarrow \bfPol(L/X, n-1)$ is induced from the morphism $\bL_{L/X}\rightarrow f^*\bL_X[1]$.
\label{prop:relpolyvectorsstacks}
\end{prop}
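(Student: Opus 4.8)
\textbf{Proof strategy for Proposition \ref{prop:relpolyvectorsstacks}.}

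The plan is to deduce the statement from the corresponding affine (or rather "graded mixed") fact recalled in Section~1.3, namely the fiber sequence of non-unital $\bP_{n+2}$-algebras
\[\bfPol^{int}(\cB_L(\infty)/f^*\cB_X(\infty), n-1)[-1]\lra \bfPol^{int}(f^*_{\cB}(\infty), n)\lra \bfPol^{int}(f^*\cB_X(\infty), n)\]
living in $\cC_L$, which is precisely the internal version of the Cartesian square
\[
\xymatrix{
\bU(A,B) \ar[r] \ar[d] & A \ar[d] \\
\bDef(B)[-n] \ar[r] & \bZ(B)
}
\]
from \cite[Section 3.5]{MS1}, applied to the $\bP_{[n+2,n+1]}$-algebra $(A,B)=(\bfPol^{int}(f^*\cB_X(\infty),n),\bfPol^{int}(\cB_L(\infty)/f^*\cB_X(\infty),n-1))$ in $\cC_L$. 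So the first step is simply to record this internal fiber sequence and identify each of its three terms with the corresponding $\bfPol^{int}$ using the definition of $\bU$.

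The second and main step is to apply the global sections functor $\Gamma(L_{DR},-)$ and check that it carries the internal fiber sequence in $\cC_L$ to the asserted fiber sequence of graded $k$-linear non-unital $\bP_{n+2}$-algebras. The middle and right-hand terms become $\bfPol(f,n)=\bU(\bfPol(X,n),\bfPol(L/X,n-1))$ and $\bfPol(X,n)\cong \Gamma(X_{DR},\bfPol^{int}(\cB_X(\infty),n))$ by the definitions given just above the statement, and the left-hand term becomes $\bfPol(L/X,n-1)[-1]$ by the Lemma preceding the statement. The only thing to verify here is that $\Gamma(L_{DR},-)$ is exact on this diagram: this is where the shift from $\cB_X$ to $f^*\cB_X$ matters, and one uses that $\Gamma(L_{DR},-)$ commutes with $f^*$-pushforward appropriately, together with the fact (invoked in the Remark after the Lemma, via \cite[Remark 2.4.8]{CPTVV}) that in this situation the polyvectors are functorial and may be computed objectwise, so that $\Gamma$ is a limit of exact functors and hence exact. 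A small point to be careful about is the Tate realization: the realization functor commutes with the relevant (co)limits, so it does not interfere with exactness.

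For the last sentence, identifying the connecting homomorphism $\bfPol(X,n)\to\bfPol(L/X,n-1)$, the plan is to trace through the construction of the $\bP_{[n+2,n+1]}$-algebra structure on $(\bfPol(X,n),\bfPol(L/X,n-1))$ recalled just before the Definition: the commutative-algebra part of this structure is, by \cite[Section 4.2]{MS1} and the discussion in the introduction, induced from the map $\bL_{L/X}\to f^*\bL_X[1]$ on cotangent complexes. Concretely, $\bfPol(X,n)=\Gamma(X,\Sym_{\cO_X}(\bT_X[-n-1]))$ and $\bfPol(L/X,n-1)=\Gamma(L,\Sym_{\cO_L}(\bT_{L/X}[-n]))$, and pulling back polyvectors along $f$ and contracting with $\bL_{L/X}\to f^*\bL_X[1]$ produces the map on symmetric algebras; on the weight-$1$ component this is exactly $f^*\bT_X[-n-1]\to \bT_{L/X}[-n-1]$, i.e. the dual of $\bL_{L/X}\to f^*\bL_X[1]$, and the connecting map of the fiber sequence is precisely the projection of $\bU$ onto this. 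So this step reduces to unwinding the definition of $\bU$ together with the already-recalled description of the commutative structure.

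\textbf{Main obstacle.} The only genuinely non-formal point is the exactness of $\Gamma(L_{DR},-)$ on the relevant diagram, i.e. the compatibility of formal localization and global sections with the $\bU$ construction after pulling back $\cB_X$ along $f$; everything else is bookkeeping transported from \cite{MS1} and \cite{CPTVV}. I expect this to be handled exactly as the analogous compatibility statements in \cite[Section 2]{CPTVV}, using functoriality of polyvectors in this case.
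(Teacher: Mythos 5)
Your overall strategy --- reduce to the $\bU$-construction for the $\bP_{[n+2,n+1]}$-algebra of internal relative polyvectors and then globalize --- is close in spirit to what the paper does, and your treatment of the connecting homomorphism (unwinding the commutative-algebra part of the structure, which formal localization identifies with the map induced by $\bL_{L/X}\to f^*\bL_X[1]$) matches the paper's argument. But there is a genuine misstep in the globalization. Applying $\Gamma(L_{DR},-)$ to the internal fiber sequence in $\cC_L$ produces
\[\bfPol(L/X,n-1)[-1]\lra \bfPol(f^*_{\cB}(\infty),n)\lra \bfPol(f^*\cB_X(\infty),n),\]
whose right-hand term is $\Gamma(L_{DR},\bfPol^{int}(f^*\cB_X(\infty),n))$. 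This is \emph{not} $\bfPol(X,n)=\Gamma(X_{DR},\bfPol^{int}(\cB_X(\infty),n))$: the paper only records a morphism $\bfPol(X,n)\lra\Gamma(L_{DR},\bfPol^{int}(f^*\cB_X(\infty),n))$ (sections over $X_{DR}$ versus sections over $L_{DR}$ of the pullback), and correspondingly the middle term of your sequence is not $\bfPol(f,n)$. This distinction is precisely the point of the definition $\bfPol(f,n)=\bU(\bfPol(X,n),\bfPol(L/X,n-1))$ and of the Cartesian square appearing in the proof of Theorem \ref{thm:spaceofcoisotropics2}. Your argument is repairable: base-change the sequence you obtained along $\bfPol(X,n)\to\bfPol(f^*\cB_X(\infty),n)$ and use that pullbacks preserve fibers --- or, more directly (as the paper does), observe that for any $\bP_{[n+2,n+1]}$-algebra $(A,B)$ there is by definition of $\bU$ a fiber sequence $B[-1]\to\bU(A,B)\to A$ of non-unital $\bP_{n+2}$-algebras, and apply this to the pair $(\bfPol(X,n),\bfPol(L/X,n-1))$ whose $\bP_{[n+2,n+1]}$-structure was constructed just before the statement.

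Relatedly, what you flag as the ``main obstacle'' --- exactness of $\Gamma(L_{DR},-)$ --- is not an obstacle at all: $\Gamma(L_{DR},-)$ is a limit and a fiber sequence is a finite limit diagram, so it is preserved automatically; no objectwise functoriality of polyvectors is needed for that step. The only genuinely non-formal input in the proposition is the one in its last sentence, namely the formal-localization identification of $\bL^{int}_{\cB_L(\infty)/f^*\cB_X(\infty)}\to\bL^{int}_{f^*\cB_X(\infty)}[1]\otimes_{f^*\cB_X(\infty)}\cB_L(\infty)$ with $\bL_{L/X}\to f^*\bL_X[1]$, which you do invoke correctly.
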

\begin{proof}
The first claim follows from definitions since we have a fiber sequence of graded non-unital $\bP_{n+2}$-algebras
\[\bfPol(L/X, n-1)[-1]\longrightarrow \bU(\bfPol(X, n), \bfPol(L/X, n-1))\longrightarrow \bfPol(X, n).\]

Moreover, we have a fiber sequence of graded non-unital commutative algebras
\[\bU(\bfPol(X, n), \bfPol(L/X, n-1))\longrightarrow \bfPol(X, n)\longrightarrow \bfPol(L/X, n-1)\]
and the second claim follows from the fact that by the general machinery of formal localization of \cite[Section 2]{CPTVV}, the map
$$ \bL_{\cB_L(\infty)/f^*\cB_X(\infty)}^{int} \to  \bL_{f^*\cB_X(\infty)}^{int}[1] \otimes_{f^*\cB_X(\infty)}\cB_L(\infty). $$
corresponds exactly to the morphism $\bL_{L/X} \to f^*\bL_X[1]$.
\end{proof}

We are now ready to prove our first main result relating the space of $n$-shifted coisotropic structures of Definition \ref{defn:cois} to the algebra of relative polyvectors introduced above. The following theorem is an extension of Theorem \ref{thm:coispolyvectors} to derived Artin stacks.

\begin{thm}\label{thm:spaceofcoisotropics2}
Let $f\colon L \to X$ be a map of derived Artin stacks. Then we have an equivalence of spaces
\[\Cois(f,n) \simeq \Map_{\alg_{\Lie}^{gr}}(k(2)[-1], \bfPol(f,n)[n+1]).\]
\end{thm}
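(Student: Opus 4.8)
The plan is to reduce the stacky statement to the affine statement of Theorem~\ref{thm:coispolyvectors} via formal localization, in the same spirit as the reduction of Theorem~\ref{thm:spaceofpoisson} from \cite[Theorem 4.5]{MS1}. First I would unravel Definition~\ref{def:generalcoisotropics}: the space $\Cois(f,n)$ is the fiber product of $\Pois(X,n)$ and $\Cois(f^*_{\cB}(\infty),n)$ over $\Pois(f^*\cB_X(\infty),n)$. The right-hand column is computed by Theorem~\ref{thm:spaceofpoisson} as $\Map_{\alg^{gr}_{\Lie}}(k(2)[-1],\bfPol(X,n)[n+1])$ and $\Map_{\alg^{gr}_{\Lie}}(k(2)[-1],\Gamma(L_{DR},\bfPol^{int}(f^*\cB_X(\infty),n))[n+1])$ respectively. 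The bottom-left corner, $\Cois(f^*_{\cB}(\infty),n)$, is a space of coisotropic structures on a morphism of commutative algebras in the presentable symmetric monoidal $\infty$-category $\cC_L$, so Theorem~\ref{thm:coispolyvectors} (applied internally in $\cC_L$, then globalized by taking $\Gamma(L_{DR},-)$) identifies it with $\Map_{\alg^{gr}_{\Lie}}(k(2)[-1],\Gamma(L_{DR},\bfPol^{int}(f^*_{\cB}(\infty),n))[n+1])$, where $\bfPol^{int}(f^*_{\cB}(\infty),n)=\bU(\bfPol^{int}(f^*\cB_X(\infty),n),\bfPol^{int}(\cB_L(\infty)/f^*\cB_X(\infty),n-1))$.

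Next I would assemble these identifications. Since $\Map_{\alg^{gr}_{\Lie}}(k(2)[-1],-)$ preserves limits in the target (it is corepresentable), and since the maps in the fiber-product diagram of Definition~\ref{def:generalcoisotropics} are, under the above equivalences, induced by the two structure maps out of $\bfPol^{int}(f^*_{\cB}(\infty),n)$ and $\bfPol^{int}(f^*\cB_X(\infty),n)$, the fiber product is computed by applying $\Map_{\alg^{gr}_{\Lie}}(k(2)[-1],(-)[n+1])$ to the corresponding fiber product of graded $\bP_{n+2}$-algebras. By the fiber sequence of Proposition~\ref{prop:relpolyvectorsstacks} together with the analogous (and tautological, from the definition of $\bU$) fiber sequences in the affine/internal setting, that fiber product of graded $\bP_{n+2}$-algebras is precisely $\bfPol(f,n)=\bU(\bfPol(X,n),\bfPol(L/X,n-1))$; here one uses the lemma identifying $\bfPol(L/X,n)$ with $\Gamma(L_{DR},\bfPol^{int}(\cB_L(\infty)/f^*\cB_X(\infty),n))$. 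This yields $\Cois(f,n)\simeq\Map_{\alg^{gr}_{\Lie}}(k(2)[-1],\bfPol(f,n)[n+1])$.

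The two points requiring care, and the main obstacle, are: (i) checking that $\cC_L$ is a symmetric monoidal model (or presentable $\infty$-) category to which the affine machinery of \cite{MS1} applies, and that $f^*_{\cB}(\infty)$ is genuinely a morphism of commutative algebras there — this is handled by the cited presentation of \cite[Section 2.3.2]{TV} and the discussion preceding Definition~\ref{def:generalcoisotropics}, and alternatively by the objectwise functoriality of polyvectors noted in \cite[Remark 2.4.8]{CPTVV}; and (ii) commuting the global-sections functor $\Gamma(L_{DR},-)$ past the formation of $\bU$ and past the $\Map_{\alg^{gr}_{\Lie}}$, i.e. checking that the internal polyvector constructions sheafify correctly over $L_{DR}$ and that $\Gamma(L_{DR},-)$ is compatible with the relevant limits. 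The latter is exactly the content of the formal localization results of \cite[Section 2]{CPTVV} that were already invoked in Theorem~\ref{thm:spaceofpoisson} and in Proposition~\ref{prop:relpolyvectorsstacks}; the argument is parallel to the one proving Theorem~\ref{thm:spaceofpoisson} from \cite[Theorem 4.5]{MS1}, so the real work is bookkeeping rather than new ideas.
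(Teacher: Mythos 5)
Your proposal is correct and follows essentially the same route as the paper: unravel Definition~\ref{def:generalcoisotropics} as a fiber product, identify the three outer corners via Theorem~\ref{thm:coispolyvectors} (applied internally in $\cC_L$) and Theorem~\ref{thm:spaceofpoisson}, and conclude by observing that $\Map_{\alg^{gr}_{\Lie}}(k(2)[-1],-)$ carries the defining Cartesian square of $\bfPol(f,n)$ to the defining Cartesian square of $\Cois(f,n)$. The technical caveats you flag (the model-categorical presentation of $\cC_L$ and the compatibility of $\Gamma(L_{DR},-)$ with the constructions) are exactly the ones the paper disposes of via \cite[Section 2.3.2]{TV} and formal localization.
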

\begin{proof}
Let again $\cC_L$ be the $\infty$-category of $\bD_{L_{DR}}(\infty)$-modules. As above, there is a morphism of commutative algebras in $\cC_L$
$$f^*_{\cB}(\infty) \colon \   f^*\cB_X(\infty) \to \cB_L(\infty)\,$$
whose algebra of relative $n$-shifted polyvectors fits into a fiber sequence 
$$ \bfPol(\cB_L(\infty) / f^* \cB_X(\infty),n-1)[n]\longrightarrow  \bfPol(f^*_{\cB}(\infty),n)[n+1] \longrightarrow   \bfPol(f^*\cB_X(\infty),n)[n+1]$$
of graded Lie algebras. 

By definition, the graded $\bP_{n+2}$-algebra $\bfPol(f,n)$ fits into a Cartesian square
\[ \xymatrix{
\bfPol(f,n) \ar[r] \ar[d] & \bfPol(X,n) \ar[d] \\
\bfPol(f^*_{\cB}(\infty),n) \ar[r] & \bfPol(f^*\cB_X(\infty), n)
}\]
in the category of graded non-unital $\bP_{n+2}$-algebras

Moreover, it follows from Theorem \ref{thm:coispolyvectors} applied in $\cC_L$ that the space of $n$-shifted coisotropic structures on $f^*_{\cB}(\infty)$ has an explicit description in terms of $\bfPol(f^*_{\cB}(\infty),n)$; namely, one has an equivalence
\[ \Cois(f^*_{\cB}(\infty),n) \cong \Map_{\alg_{\Lie}^{gr}}(k(2)[-1] , \bfPol(f^*_{\cB}(\infty),n)[n+1]).\]

On the other hand, Theorem \ref{thm:spaceofpoisson} tells us that
\[ \Pois(X,n) \simeq \Map_{\alg_{\Lie}^{gr}}(k(2)[-1], \bfPol(X,n)[n+1]), \]
and similarly
\[ \Pois(f^*\cB_X(\infty),n) \simeq \Map_{\alg_{\Lie}^{gr}}(k(2)[-1], \bfPol(f^*\cB_X(\infty),n)[n+1]), \]
so that we immediately get the desired equivalence.
\end{proof}

The alternative characterization of coisotropic structures given by Theorem \ref{thm:spaceofcoisotropics2} is of a more geometric nature than Definition \ref{def:generalcoisotropics}. This demonstrates why this definition is a sensible generalization of the classical notion, as explained in the following examples.

\subsection{Examples}
\label{sect:coisotropicexamples}

\begin{enumerate}
\item \textbf{Smooth schemes.} Let $L$ be a smooth subscheme of a smooth scheme $X$, and let $f\colon L \to X$ be the corresponding immersion. Suppose $X$ is endowed with a classical Poisson structure $\pi_X$, i.e. $\pi_X\in\Pois(X, 0)$. The graded $\bP_2$-algebra of $0$-shifted polyvectors on $X$ is
\[\bfPol(X, 0)\cong \Gamma(X, \Sym_{\cO_X}(\T_X[-1])),\]
where the weight grading is given by putting the tangent bundle $\T_X$ in weight 1 and the bracket is the usual Schouten bracket. Denote by $\rN_{L/X}\cong \bT_{L/X}[1]$ the normal bundle to the subscheme $L$.

By Proposition \ref{prop:relpolyvectorsstacks} we get a graded $L_\infty$ structure on the graded complex
\[\bfPol(f, 0)[1]\cong \Gamma(X, \Sym(\T_X[-1]))[1]\oplus \Gamma(L, \Sym(\rN_{L/X}[-1]))\]
with the differential twisted by the morphism $f^*$.

A Maurer--Cartan element in $\bfPol(f, 0)^{\geq 2}[1]$ is an element $\pi_X\in \rH^0(X, \wedge^2 \T_X)$, so let us analyze the possible brackets of such an element. The bracket $[\pi_X, ..., \pi_X]_n$ has degree $2$ and weight $n+1$. Therefore, $[\pi_X, ..., \pi_X]_n = 0$ for $n > 2$. The projection $\bfPol(f, 0)[1]\rightarrow \bfPol(X, 0)[1]$ has a structure of a graded $L_\infty$ morphism, hence $[\pi_X, \pi_X]$ in $\bfPol(f, 0)^{\geq 2}[1]$ is the standard Schouten bracket. Let us denote by $f^*\pi_X$ the image of $\pi_X$ in $\Gamma(L, \wedge^2(\rN_{L/X}))$. Then the Maurer--Cartan equation for $\pi_X$ in $\bfPol(f, 0)^{\geq 2}[1]$ splits into two:
\[[\pi_X, \pi_X] = 0\quad f^* \pi_X = 0.\]

The first equation is the integrability equation for the Poisson structure on $X$ and the second equation is equivalent to $L\rightarrow X$ being coisotropic with respect to the Poisson structure $\pi_X$.

By degree reasons the space of Maurer--Cartan elements in $\bfPol(f, 0)^{\geq 2}[1]$ is discrete and hence $\Cois(f, 0)$ is a subset of $\Pois(X, 0)$ of Poisson structures for which the subscheme $L$ is coisotropic in the usual sense. In other words, in the classical context the morphism $L\rightarrow X$ has a coisotropic structure iff $L\rightarrow X$ is a coisotropic submanifold in the usual sense.

Here is an alternative way to obtain the same conclusion. Assume first $L$ and $X$ are affine. In this case the morphism
\[f^*\colon \Gamma(X, \Sym(\T_X[-1]))\longrightarrow \Gamma(L, \Sym(\rN_{L/X}[-1]))\]
is surjective and hence by \cite[Proposition 4.11]{MS1} $\bfPol(f, 0)[1]$ is equivalent to the algebra of polyvectors $\Pol(f, 0)[1]$ on $X$ which vanish when pulled back to $\rN_{L/X}$. This gives the result in the affine case and the general case follows by descent.

\item \textbf{Identity.} \label{ex:identity} Let $X$ be a derived Artin stack and consider the identity morphism $\id\colon X\rightarrow X$. The homotopy fiber sequence of graded dg Lie algebras
\[\bfPol(X/X, n-1)[n] \rightarrow \bfPol(\id, n)[n+1] \rightarrow \bfPol(X, n)[n+1]\]
implies that the projection $\bfPol(\id, n)\rightarrow \bfPol(X, n)$ is a quasi-isomorphism in weights $\geq 1$ since $\bT_{X/X}=0$. Therefore, the natural projection
\[\Cois(\id, n)\longrightarrow \Pois(X, n)\]
is a weak equivalence, i.e. the identity morphism has a unique coisotropic structure for any $n$-shifted Poisson structure on $X$.

An interesting consequence of this statement is that we obtain a forgetful map $\Pois(X, n)\rightarrow \Pois(X, n-1)$ given as the composite
\[\Pois(X, n)\cong \Cois(\id, n)\longrightarrow \Pois(X, n-1).\]

\item \textbf{Point.} Let $X$ be a derived Artin stack and consider the projection $p\colon X\rightarrow \pt$. The homotopy fiber sequence of graded dg Lie algebras
\[\bfPol(X/\pt, n-1)[n] \rightarrow \bfPol(p, n)[n+1]\rightarrow \bfPol(\pt, n)[n+1]\]
implies that the morphism $\bfPol(X, n-1)[n]\rightarrow \bfPol(p, n)[n+1]$ is a quasi-isomorphism in weights $\geq 1$. Therefore, the natural morphism
\[\Cois(p, n)\rightarrow \Pois(X, n-1)\]
is a weak equivalence.

Note that this is a shifted Poisson analogue of a well-known statement for shifted symplectic structures: a Lagrangian structure on $X\rightarrow \pt$ where the point is equipped with its unique $n$-shifted symplectic structure is the same as an $(n-1)$-shifted symplectic structure on $X$.
\end{enumerate}

Let us now give a more general procedure to construct coisotropic structures. Let $X,Y$ be derived Artin stacks together with a morphism $f\colon X\rightarrow Y$. In analogy with \cite[Section 4.5]{MS1}, we give the following definition.
\begin{defn}\label{defn:poissonmaps} Let $f\colon X \to Y$ be a morphism of derived Artin stacks, and let
$$f^*_{\cB}(\infty) \colon f^*\cB_{Y}(\infty) \to \cB_X(\infty)$$
be the induced map of $\bD_{X_{DR}}(\infty)$-algebras. We define \emph{the space $\Pois(f,n)$ of $n$-Poisson structures on $f$} to the the fiber product
$$\xymatrix{ \Pois(f,n) \ar[r] \ar[d] & \Pois(X,n) \times \Pois(Y,n)\ar[d] \\
\Pois(f^*_{\cB}(\infty),n) \ar[r] & \Pois(\cB_X(\infty),n) \times \Pois(f^*\cB_Y(\infty),n)
}$$
where $\Pois(f^*_{\cB}(\infty),n)$ is defined as in \cite[Definition 4.18]{MS1}.
\end{defn}

In the special case where $X= \Spec (B)$ and $Y=\Spec(A)$ are derived affine schemes, one sees that $\Pois(f,n)$ is indeed equivalent to the fiber of
$$ \Arr(\balg_{\bP_{n+1}}(\bdg_k))^{\sim} \lra \Arr(\bCAlg(\bdg_k))^{\sim} $$
taken at the given map $f\colon A \to B$. This means Definition \ref{defn:poissonmaps} is a generalization of \cite[Definition 4.18]{MS1}.

Denote by $g\colon X\rightarrow X\times Y$ the graph of $f$, that is to say the morphism given by $\id\times f$. The following result is an extension of \cite[Theorem 4.20]{MS1} to the case of general derived stacks.

\begin{thm}\label{thm:graphpoisson}
With notations as above, there is a cartesian square of spaces
$$\xymatrix{
\Pois(f,n) \ar[r] \ar[d] & \Pois(X,n) \times \Pois(Y,n) \ar[d] \\
\Cois(g,n) \ar[r] & \Pois(X \times Y,n) }$$
where the map on the right sends two Poisson structures $\pi_X$ and $\pi_Y$ to the Poisson structure given by $(\pi_X;-\pi_Y)$.
\end{thm}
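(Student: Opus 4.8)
The plan is to reduce the statement to the affine case (where it is \cite[Theorem 4.20]{MS1}) by working inside the $\infty$-category $\cC_X$ of $\bD_{X_{DR}}(\infty)$-modules on $X_{DR}$ and using formal localization. First I would unwind all four spaces appearing in the square via their $\bfPol$-presentations: by Theorem~\ref{thm:spaceofpoisson} and Theorem~\ref{thm:spaceofcoisotropics2}, each of $\Pois(X,n)$, $\Pois(Y,n)$, $\Pois(X\times Y,n)$, $\Cois(g,n)$ and $\Pois(f,n)$ is a mapping space out of $k(2)[-1]$ in $\balg^{gr}_{\Lie}$ into a shift of an appropriate graded polyvector Lie algebra. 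Since $\Map_{\balg^{gr}_{\Lie}}(k(2)[-1],-)$ preserves limits, it suffices to establish a Cartesian square of graded dg Lie algebras
\[
\xymatrix{
\bfPol(f,n) \ar[r]\ar[d] & \bfPol(X,n)\oplus\bfPol(Y,n) \ar[d] \\
\bfPol(g,n) \ar[r] & \bfPol(X\times Y,n)
}
\]
after the appropriate shift, where the right vertical map is induced by $(\pi_X,\pi_Y)\mapsto(\pi_X;-\pi_Y)$, i.e.\ by the sign automorphism on the $Y$-polyvectors, and then apply the mapping-space functor.

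The heart of the matter is therefore to produce this Cartesian square of polyvectors. Here I would pass to $\cC_X$ and use formal localization: the morphism $f\colon X\to X\times Y$'s graph $g$ corresponds under formal localization to the morphism $g^*_\cB(\infty)\colon g^*\cB_{X\times Y}(\infty)\to\cB_X(\infty)$ of commutative algebras in $\cC_X$, and by the Künneth-type behaviour of $\cB$ we have $\cB_{X\times Y}(\infty)\cong\cB_X(\infty)\otimes_{\bD_{X_{DR}}(\infty)}(\text{pullback of }\cB_Y(\infty))$, so that $g^*_\cB(\infty)$ is the relative version of the diagonal-type map studied in \cite[Section 4.5]{MS1}. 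Then I would invoke \cite[Theorem 4.20]{MS1} \emph{internally} in $\cC_X$ — which is legitimate since, as noted in the Remark following Proposition~\ref{prop:relpolyvectorsstacks}, $\cC_X$ is presentable by a model category via \cite[Section 2.3.2]{TV}, so all constructions of \cite{MS1} apply — to get the Cartesian square of internal polyvectors $\bfPol^{int}$ in $\cC_X$ relating $\Pois(g^*_\cB(\infty),n)$-type data to $\Pois(f^*_\cB(\infty),n)$-type data. Finally, applying the global sections functor $\Gamma(X_{DR},-)$, which is a right adjoint and hence preserves the Cartesian square, and combining with the defining Cartesian squares of $\Cois(g,n)$ (Definition~\ref{def:generalcoisotropics}) and $\Pois(f,n)$ (Definition~\ref{defn:poissonmaps}) together with the fact that $\cB_{X\times Y}(\infty)$'s polyvectors split off the pullback of $\cB_Y(\infty)$'s, yields the claimed square of spaces after a diagram chase pasting several Cartesian squares.

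I expect the main obstacle to be bookkeeping the sign/opposite-Poisson-structure twist correctly and, relatedly, identifying precisely how $\bfPol(X\times Y,n)$ decomposes relative to the factors $\bfPol(X,n)$ and $\bfPol(Y,n)$ at the level of the $\bP_{[n+2,n+1]}$-structure on $\bfPol(g,n)$ — that is, checking that the connecting map $\bfPol(X\times Y,n)\to\bfPol(X/(X\times Y),n-1)$ induced by $\bL_{X/(X\times Y)}\to g^*\bL_{X\times Y}[1]$ is exactly the difference of the canonical maps to $\bfPol(X,n)$ and (the twisted) $\bfPol(Y,n)$. This is genuinely the relative analogue of the computation in the proof of \cite[Theorem 4.20]{MS1}, so I would handle it by reducing objectwise over $\dAff/X_{DR}$ — using that, as remarked, polyvectors here are functorial and can be computed objectwise — to the affine computation already carried out in part I, and then assemble. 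The remaining verifications (compatibility of $f^*_\cB$ with all the structures, commutativity of the pasting diagram, preservation of limits by $\Map_{\balg^{gr}_{\Lie}}(k(2)[-1],-)$ and by $\Gamma(X_{DR},-)$) are routine.
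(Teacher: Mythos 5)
Your core argument is the paper's: both proofs reduce to \cite[Theorem 4.20]{MS1} applied internally in the $\infty$-category of $\bD_{X_{DR}}(\infty)$-modules, and both hinge on the K\"unneth identification $g^*\cB_{X\times Y}(\infty)\cong f^*\cB_Y(\infty)\otimes_{\bD_{X_{DR}}(\infty)}\cB_X(\infty)$, after which the defining fiber squares of $\Pois(f,n)$ (Definition \ref{defn:poissonmaps}) and $\Cois(g,n)$ (Definition \ref{def:generalcoisotropics}) are pasted against the internal affine square. Two remarks on where you diverge. First, the polyvector reformulation in your opening paragraph is neither needed nor available as stated: the paper gives graded Lie algebra models for $\Pois(X,n)$ and $\Cois(g,n)$, but not for $\Pois(f,n)$ --- the symbol $\bfPol(f,n)$ here means the relative polyvectors $\bU(\bfPol(X,n),\bfPol(L/X,n-1))$ computing \emph{coisotropic} structures on $f\colon L\to X$, not a Lie algebra computing the space of Poisson structures for which $f$ is a Poisson morphism; moreover, even with such a model, a Cartesian square of Lie algebras is strictly stronger than what you need, since $\Map_{\balg_{\Lie}^{gr}}(k(2)[-1],-)$ preserves limits but does not reflect them. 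Your second paragraph bypasses this anyway and is the proof the paper gives, so the detour is harmless but should be cut. Second, the K\"unneth identification is the only step of the proof carrying real content; the paper verifies it objectwise on $A$-points of $X_{DR}$ using that $(-)_{DR}$ is a right adjoint (so $Y_{DR}\cong$ products of de Rham stacks behave well), that $(X\times Y)_A\cong X_A\times_A Y_A$, and hence $\bD((X\times Y)_A)(\infty)\cong \bD(X_A)(\infty)\otimes \bD(Y_A)(\infty)$ --- you assert it without this check, which you should supply. Finally, your worry about recomputing the sign on the connecting map $\bfPol(X\times Y,n)\to \bfPol(X/(X\times Y),n-1)$ objectwise is unnecessary labour: the opposite-structure twist is already packaged into the affine statement of \cite[Theorem 4.20]{MS1} being applied internally, so no further bookkeeping is required.
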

\begin{proof}
Consider the map of $\bD_{X_{DR}}(\infty)$-algebras
$$f^*_{\cB}(\infty) \colon f^*\cB_Y(\infty) \to \cB_X(\infty).$$
By \cite[Theorem 4.20]{MS1}, we know that there is a fiber square of spaces
$$\xymatrix{
\Pois(f^*_{\cB}(\infty),n) \ar[r]\ar[d] & \Pois(\cB_X(\infty),n) \times \Pois(f^*\cB_Y(\infty),n) \ar[d] \\
\Cois(g^*_{\cB}(\infty),n) \ar[r] & \Pois(f^*\cB_Y(\infty) \otimes_{\bD_{X_{DR}}(\infty)} \cB_X(\infty), n)
}$$
where $g^*_{\cB}(\infty)$ is the induced map
$$g^*_{\cB}(\infty) \colon f^*\cB_Y(\infty) \otimes_{\bD_{X_{DR}}(\infty)} \cB_X(\infty) \to \cB_X(\infty).$$
It follows that in order to prove the proposition, it will suffice to show that there is an equivalence
$$ f^*\cB_Y(\infty) \otimes_{\bD_{X_{DR}}(\infty)} \cB_X(\infty) \cong g^*\cB_{X \times Y}(\infty)$$
of $\bD_{X_{DR}}(\infty)$-modules. This can be checked directly: for every affine $A$, given an $A$-point of $X_{DR}$, the value of $g^*\cB_{X \times Y}(\infty)$ on $A$ is by definition $\bD((X \times Y)_A)(\infty)$, where $(X\times Y)_A$ is the fiber product
$$
\xymatrix{
(X\times Y)_A \ar[r] \ar[d]  & X  \ar[r] & X\times Y \ar[d] \\
\Spec A \ar[r]  & X_{DR} \ar[r] & X_{DR} \times Y_{DR}  \\ 
}
$$

But $(X \times Y)_A$ is naturally equivalent to $X_A \times_A Y_A$, so that 
$$\bD((X \times Y)_A)(\infty) \cong \bD(X_A)(\infty) \otimes_{\bD_{X_{DR}}(\infty)} \bD(Y_A)(\infty)$$
which concludes the proof.
\end{proof}

Notice that Theorem \ref{thm:graphpoisson} gives further examples of coisotropic structures: for every $n$-shifted Poisson derived Artin stack $X$, the map to $\pt=\Spec k$ is naturally a Poisson map, where $\Spec k$ is considered with its trivial $n$-Poisson structure. The graph of this map is the identity map on $X$, which therefore admits a canonical coisotropic structure, already constructed in Example \ref{ex:identity} above. Notice also that the space of Poisson maps $X\rightarrow \pt$ is equivalent to the space $\Pois(X, n)$ of $n$-shifted Poisson structures on $X$. We therefore get an equivalence $\Pois(X, n)\cong \Cois(\id, n)$ exactly as in Example \ref{ex:identity}.

The identity morphism $X\rightarrow X$ is also a Poisson morphism. Its graph is the diagonal $X\to X \times X$, which then admits a canonical coisotropic structure.

\section{Coisotropic intersections}
In this section we state and prove our second main result which extends the Lagrangian intersection theorem (see \cite[Theorem 2.9]{PTVV}) in the context of shifted Poisson structures.

\subsection{Affine case}

We begin with an affine version of the coisotropic intersection theorem. We consider the symmetric monoidal dg category $\cM$ as in \cite[Section 1.1]{MS1}.

\begin{prop}\label{prop:affineintersections}
Consider a diagram
$$\xymatrix @R=.5pc {
& A \ar[dr]^{g} \ar[dl]_{f}& \\
B_1&& B_2\\
}$$
in the $\infty$-category $\bCAlg(\cM)$ of commutative algebras in $\cM$. Suppose we are given coisotropic structures in $\Cois(f,n)$ and $\Cois(g,n)$, such that the $\bP_{n+1}$-structures on $A$ coincide. Then the tensor product $B_1 \otimes_A B_2$ carries a natural $\bP_n$-structure such that the map
$$B_1^{op} \otimes B_2 \longrightarrow B_1 \otimes_A B_2$$
is a Poisson morphism, where $B_1^{op}$ is the algebra $B_1$ taken with the opposite Poisson structure. 
\end{prop}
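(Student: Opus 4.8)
The plan is to exploit the additivity equivalence $\balg_{\bP_{n+1}}(\cM) \simeq \balg(\balg_{\bP_n}(\cM))$ of Theorem \ref{thm:additivity} together with the module-theoretic reformulation of coisotropic structures from Corollary \ref{cor:coiscptvv}. By that corollary, a coisotropic structure in $\Cois(f,n)$ promotes $A$ to a $\bP_{n+1}$-algebra, $B_1$ to a $\bP_n$-algebra, and $f$ to a left $A$-module structure on $B_1$ in $\balg_{\bP_n}(\cM)$; similarly $\Cois(g,n)$ gives $B_2$ the structure of a left $A$-module in $\balg_{\bP_n}(\cM)$, and the hypothesis that the two $\bP_{n+1}$-structures on $A$ agree means we are working over a single associative algebra object $A \in \balg(\balg_{\bP_n}(\cM))$. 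The idea is then: a left module and a right module over an associative algebra can be tensored, so I should realize $B_1 \otimes_A B_2$ as a relative tensor product $B_1^{op} \otimes_A B_2$ taken inside the symmetric monoidal $\infty$-category $\balg_{\bP_n}(\cM)$.

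First I would observe that $B_1^{op}$, the $\bP_n$-algebra $B_1$ with the opposite Poisson bracket, is naturally a \emph{right} $A$-module in $\balg_{\bP_n}(\cM)$: passing to the opposite Poisson structure converts the left $A$-action (which came from a map of $\bP_{n+1}$-algebras $A \to \bZ(B_1)$) into a right action, using that $\bZ(B_1^{op}) \cong \bZ(B_1)^{op}$ as $\bP_{n+1}$-algebras. Then, since $\balg_{\bP_n}(\cM)$ is a presentable symmetric monoidal $\infty$-category (it has relative tensor products, i.e.\ geometric realizations of bar constructions, computed in $\cM$ on underlying objects because the forgetful functor preserves sifted colimits), I can form $B_1^{op} \otimes_A B_2 \in \balg_{\bP_n}(\cM)$. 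Next I would check that the underlying commutative algebra of this $\bP_n$-algebra is exactly $B_1 \otimes_A B_2$: this follows because the forgetful functor $\balg_{\bP_n}(\cM) \to \bCAlg(\cM)$ is symmetric monoidal and preserves the relevant sifted colimits, so it sends relative tensor products to relative tensor products, and the underlying commutative algebra of $B_1^{op}$ is $B_1$. Finally, the canonical map $B_1^{op} \otimes B_2 \to B_1^{op} \otimes_A B_2$ from the absolute to the relative tensor product is a map in $\balg_{\bP_n}(\cM)$, hence a Poisson morphism, and on underlying commutative algebras it is the map in the statement.

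The main obstacle I anticipate is the bookkeeping around the opposite Poisson structure and left-versus-right modules: one must verify carefully that $B_1 \mapsto B_1^{op}$ really does implement the equivalence between left and right $A$-modules in $\balg_{\bP_n}(\cM)$ at the level of the additivity equivalence, i.e.\ that it is compatible with the identification of $\bP_{n+1}$-algebras as associative algebra objects in $\bP_n$-algebras and with the center construction $\bZ(-)$. Concretely, I would want a clean statement that the anti-involution of $\balg_{\bP_n}(\cM)$ given by negating the bracket, combined with the anti-equivalence $\blmod \leftrightarrow \brmod$, matches under additivity; this is essentially formal but should be spelled out, perhaps by reducing to the analogous well-known statement for $\Ass = \bP_1$ after a degree shift, or by citing the relevant functoriality of $\bZ$ from \cite{Sa2}. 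A secondary, more routine point is confirming that all the colimits in question are computed in $\cM$ (so that "underlying object" commutes with everything), which is where the presentability/finiteness hypotheses on $\cM$ from \cite[Section 1.1]{MS1} enter; I expect this to be standard. Once these compatibilities are in hand, the proof is essentially a one-line application of the relative tensor product in $\balg_{\bP_n}(\cM)$.
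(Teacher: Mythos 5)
Your proposal is correct and follows essentially the same route as the paper: the paper deduces the statement from a slightly more general result on composing coisotropic correspondences (Proposition \ref{prop:coiscorrespondences}), whose proof is exactly your argument --- reinterpret the coisotropic structures via Corollary \ref{cor:coiscptvv} as (bi)module structures in $\balg_{\bP_n}(\cM)$ and form the relative tensor product there, with the $(-)^{op}$ twist converting the left $A$-action on $B_1$ into a right one. The compatibility checks you flag (opposite structures versus left/right modules under additivity, and the forgetful functor to $\bCAlg(\cM)$ preserving the bar-construction colimits) are indeed the only points of substance, and the paper treats them at the same level of detail as you propose.
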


The proposition above is an easy consequence of Proposition \ref{prop:coiscorrespondences} below, which is a slightly more general result.

\begin{remark}
Note that \cite[Theorem 1.9]{Sa1} gives an alternative way to construct coisotropic intersections. Unfortunately, we do not know if the induced $\bP_n$ structures on $B_1\otimes_A B_2$ are equivalent.
\end{remark}

Consider two objects $A, B \in \balg_{\bP_{n+1}}(\cM)$. We say that a diagram in $\bCAlg(\cM)$ of the form
$$\xymatrix @R=.5pc {
A \ar[dr]&  & B \ar[dl] \\
&L& \\
}$$
is an \emph{affine $n$-shifted coisotropic correspondence from $A$ to $B$} if the induced map $A \otimes B^{op} \to L$ is endowed with an $n$-shifted coisotropic structure, relative to the given $n$-shifted Poisson structure on $A \otimes B^{op}$. Given such an affine $n$-shifted coisotropic correspondence we obtain a $\bP_n$-algebra structure on $L$.

\begin{prop}\label{prop:coiscorrespondences}
Let $A,B$ and $C$ be objects of $\balg_{\bP_{n+1}}(\cM)$, and suppose we are given affine $n$-shifted coisotropic correspondences $A \to L_1 \leftarrow B$ from $A$ and $B$ and $B \to L_2 \leftarrow C$ from $B$ to $C$. Let $L_{12} = L_1 \otimes_B L_2$. Then $A\to L_{12} \leftarrow C$ is an affine $n$-shifted coisotropic correspondence from $A$ to $C$ such that the projection $L_1\otimes L_2\rightarrow L_{12}$ is a morphism of $\bP_n$-algebras.
\end{prop}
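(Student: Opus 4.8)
The natural strategy is to reduce everything to the additivity statement (Theorem~\ref{thm:additivity}) together with the identification of coisotropic structures with left modules in $\bP_n$-algebras (Corollary~\ref{cor:coiscptvv} / the commutative diagram preceding it). Indeed, via these results an affine $n$-shifted coisotropic correspondence $A\to L\leftarrow B$ is precisely the data making $L$ a left $(A\otimes B^{op})$-module in $\balg_{\bP_n}(\cM)$; and since $A$ is a $\bP_{n+1}$-algebra, i.e.\ an associative algebra object in $\balg_{\bP_n}(\cM)$ by additivity, such a correspondence is the same as an $(A,B)$-bimodule object $L$ in the symmetric monoidal $\infty$-category $\balg_{\bP_n}(\cM)$. (Here one uses the standard identification of left $(A\otimes B^{op})$-modules with $(A,B)$-bimodules in any symmetric monoidal $\infty$-category, applied to $\balg_{\bP_n}(\cM)$, which is symmetric monoidal under $\otimes$ because $\bP_n$ is a Hopf operad.)

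With that translation in place, the proposition becomes the statement that bimodules compose: given an $(A,B)$-bimodule $L_1$ and a $(B,C)$-bimodule $L_2$ in $\balg_{\bP_n}(\cM)$, the relative tensor product $L_{12}=L_1\otimes_B L_2$ is an $(A,C)$-bimodule. First I would recall that the relative tensor product $\otimes_B$ over an associative algebra object $B$ exists in the $\infty$-category $\balg_{\bP_n}(\cM)$: this follows because $\balg_{\bP_n}(\cM)$ admits geometric realizations and the monoidal product $\otimes$ preserves them (equivalently, one may model it by the two-sided bar construction $\mathrm{Bar}(L_1, B, L_2)$, which makes sense since $\balg_{\bP_n}(\cM)$ is presentable symmetric monoidal), and that moreover under the forgetful functor $\balg_{\bP_n}(\cM)\to\cM$ this relative tensor product is computed by the underlying relative tensor product in $\cM$, so that the underlying object of $L_{12}$ is indeed the expected $L_1\otimes_B L_2$. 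The residual $A$-action on the left and $C$-action on the right are inherited from $L_1$ and $L_2$ respectively (they commute with the $B$-action being quotiented out), giving the $(A,C)$-bimodule structure; unwinding additivity once more turns this back into an affine $n$-shifted coisotropic correspondence $A\to L_{12}\leftarrow C$.

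For the last assertion, that the projection $L_1\otimes L_2\to L_{12}$ is a morphism of $\bP_n$-algebras: this map is simply the canonical map from the tensor product to the relative tensor product (the zeroth term of the bar construction mapping to its colimit), which is by construction a map in $\balg_{\bP_n}(\cM)$, hence a morphism of $\bP_n$-algebras. I expect the only genuinely delicate point to be the verification that $\otimes_B$ in $\balg_{\bP_n}(\cM)$ is compatible with the one in $\cM$ — i.e.\ that forming the coisotropic intersection at the level of $\bP_n$-algebras does not change the underlying complex — which is where one must invoke that the forgetful functor $\balg_{\bP_n}(\cM)\to\cM$ is conservative, preserves sifted colimits, and is lax (indeed, strong) symmetric monoidal enough to commute with the bar construction; once this is granted, everything else is formal bimodule bookkeeping. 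Proposition~\ref{prop:affineintersections} then follows by specializing to the correspondences $A\to B_i\leftarrow k$ (with $k$ the trivial $\bP_{n+1}$-algebra), so that the intersection is $B_1\otimes_A B_2$ as a $\bP_n$-algebra and $B_1^{op}\otimes B_2\to B_1\otimes_A B_2$ is the structure map, Poisson by the above.
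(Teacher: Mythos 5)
Your proposal is correct and takes essentially the same route as the paper: the paper likewise uses Corollary~\ref{cor:coiscptvv} and Poisson additivity to view $L_1$ and $L_2$ as an $(A,B)$- and a $(B,C)$-bimodule in $\balg_{\bP_n}(\cM)$, concludes by composition of bimodules that $L_{12}\in\bimod{A}{C}(\balg_{\bP_n}(\cM))$, and obtains the last assertion from the natural projection $L_1\otimes L_2\to L_1\otimes_B L_2$ in $\balg_{\bP_n}(\cM)$. Your additional care about the existence of $\otimes_B$ via the bar construction and its compatibility with the forgetful functor to $\cM$ is a point the paper leaves implicit, but it is the right justification.
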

\begin{proof}
Interpreting the given coisotropic structures as in Corollary \ref{cor:coiscptvv}, we can interpret
\[A,B,C \in \balg(\balg_{\bP_n}(\cM))\]
and
\[L_1 \in \bimod{A}{B}(\balg_{\bP_{n}}(\cM)),\qquad L_2 \in \bimod{B}{C}(\balg_{\bP_{n}}(\cM)).\]

Therefore, using composition of bimodules we see that $L_{12}\in\bimod{A}{C}(\balg_{\bP_n}(\cM))$, i.e. $L_{12}$ is an affine $n$-shifted coisotropic correspondence from $A$ to $C$. The last statement follows from the existence of a natural projection $L_1\otimes L_2\rightarrow L_1\otimes_B L_2$ in the $\infty$-category $\balg_{\bP_n}(\cM)$.
\end{proof}

\begin{remark}\label{rmk:coiscorrespondences}
The above Proposition \ref{prop:coiscorrespondences} gives a way to \emph{compose} affine coisotropic correspondences. Using the Poisson additivity proven in  \cite{Sa2} and the construction of the Morita $(\infty, m)$-category of $\bE_m$-algebras given in \cite{Ha} and \cite[Section 3]{Sch} it is indeed possible to construct an $(\infty,m)$-category whose objects are $\bP_n$-algebras in $\cM$, and whose $i$-morphisms are $i$-fold coisotropic correspondences. We will come back to these questions in a future work.
\end{remark}

\subsection{Intersections of derived stacks}

Notice that Proposition \ref{prop:affineintersections} recovers in particular the constructions in \cite{BG} for affine schemes. More generally, derived algebraic geometry provides a suitable general context to interpret the results of Baranovsky and Ginzburg: we now extend Proposition \ref{prop:affineintersections} to general derived stacks, giving a general conceptual explanation for the Gerstenhaber algebra structure constructed in \cite{BG}.

First, we need a lemma.
\begin{lm}
Consider a diagram of derived Artin stacks
$$
\xymatrix{
K \ar[r]^{\phi}  & X \ar[r]^i \ar[d]^j & Y \ar[d] \\
& Z \ar[r] & W 
}
$$
where the square on the right is Cartesian. Then the diagram of quasi-coherent sheaves on $K$
$$
\xymatrix{
\bT_{K/X} \ar[r] \ar[d] & \bT_{K/Y} \ar[d] \\
\bT_{K/Z} \ar[r] & \bT_{K/W} 
}
$$
is Cartesian.
\label{lm:cartesianT}
\end{lm}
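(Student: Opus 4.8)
The plan is to reduce the statement about relative tangent complexes to the defining cofiber sequences and the base change property of the cotangent complex. First I would recall that for any morphism of derived Artin stacks $a\colon U\to V$ one has a cofiber sequence $\phi^*\bL_{X}$-style triangle; concretely, for the composites $K\to X\to Y$, $K\to X\to Z$, $K\to Z\to W$ and $K\to Y\to W$ we get the standard fiber sequences of tangent complexes on $K$, e.g. $\bT_{K/X}\to \bT_{K/Y}\to \phi^*\bT_{X/Y}$ and $\bT_{K/Z}\to \bT_{K/W}\to (j\phi)^*\bT_{Z/W}$. Dualizing, the square in the statement fits into a map of fiber sequences whose cofibers are $\phi^*\bT_{X/Y}$ and $(j\phi)^*\bT_{Z/W}$. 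Hence the square is Cartesian precisely when the induced map $\phi^*\bT_{X/Y}\to (j\phi)^*\bT_{Z/W}$ on the cofibers is an equivalence.

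The key point is then base change for the (co)tangent complex along the Cartesian square on the right: since $X\simeq Z\times_W Y$, we have $\bL_{X/Y}\simeq j^*\bL_{Z/W}$ and dually $\bT_{X/Y}\simeq j^*\bT_{Z/W}$ as quasi-coherent sheaves on $X$. Pulling back along $\phi\colon K\to X$ gives $\phi^*\bT_{X/Y}\simeq \phi^*j^*\bT_{Z/W}\simeq (j\phi)^*\bT_{Z/W}$, and one checks this equivalence is compatible with the two maps from the fiber sequences above (both are induced by functoriality of the tangent complex applied to the commuting square). Therefore the comparison map on cofibers is an equivalence, and the original square of tangent complexes on $K$ is Cartesian.

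More precisely, I would organize the argument as a diagram chase in the stable $\infty$-category of quasi-coherent sheaves on $K$: form the $3\times 3$ diagram whose rows are the fiber sequences $\bT_{K/X}\to\bT_{K/Y}\to\phi^*\bT_{X/Y}$, $\bT_{K/Z}\to\bT_{K/W}\to(j\phi)^*\bT_{Z/W}$, and a third row of cofibers; two of the three columns are fiber sequences by construction, and the third column identifies via base change with the first after a shift, forcing the middle square (which is the one in the statement) to be Cartesian. The only mildly delicate step is checking that the base-change equivalence $\bT_{X/Y}\simeq j^*\bT_{Z/W}$ is the same map that appears when comparing the two relative tangent complexes of $K$ functorially; this is a standard compatibility of the cotangent complex with composition and base change, so I do not expect it to be a genuine obstacle, merely bookkeeping.
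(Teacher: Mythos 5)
Your argument is correct, and it reaches the conclusion by a slightly different decomposition than the paper's. You use the two fiber sequences attached to the rows of the square, namely $\bT_{K/X}\to\bT_{K/Y}\to\phi^*\bT_{X/Y}$ and $\bT_{K/Z}\to\bT_{K/W}\to(j\phi)^*\bT_{Z/W}$, invoke the criterion that a square in a stable $\infty$-category is Cartesian iff the induced map on horizontal cofibers is an equivalence, and then feed in the base-change equivalence $\bT_{X/Y}\simeq j^*\bT_{Z/W}$ coming from $X\simeq Z\times_W Y$. The paper instead works with the two fiber sequences sharing the common target $\bT_{K/W}$ (for the composites $K\to Y\to W$ and $K\to Z\to W$), identifies the homotopy pullback of the cospan with the fiber of $\bT_{K/W}\to\phi^*i^*\bT_{Y/W}\oplus\phi^*j^*\bT_{Z/W}$, and uses the splitting $\bT_{X/W}\simeq i^*\bT_{Y/W}\oplus j^*\bT_{Z/W}$ to recognize this fiber as $\bT_{K/X}$. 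The two inputs from Cartesianness of the stack square are equivalent (each follows from the other via the fiber sequences for $X\to Y\to W$ and $X\to Z\to W$), so the difference is organizational: your route isolates the comparison in a single cofiber and arguably makes the ``delicate'' compatibility you flag more visible, while the paper's route avoids having to discuss that compatibility explicitly at the cost of invoking the direct-sum decomposition of $\bT_{X/W}$. Either way the compatibility of base change with the functoriality maps is standard, so there is no gap.
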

\begin{proof}
From the diagram of stacks, one immediately gets two fiber sequences of quasi-coherent sheaves on $K$
$$
\xymatrix{
\bT_{K/Y} \ar[r]  & \bT_{K/W} \ar[r] & \phi^*i^*\bT_{Y/W}
}
$$
$$
\xymatrix{
\bT_{K/Z} \ar[r]  & \bT_{K/W} \ar[r] & \phi^*j^*\bT_{Z/W}
}
$$
and therefore the pullback of
$$
\xymatrix{
 & \bT_{K/Y} \ar[d] \\
\bT_{K/Z} \ar[r] & \bT_{K/W} 
}
$$
is equivalent to the fiber of the morphism $\bT_{K/W} \to \phi^*i^*\bT_{Y/W} \oplus \phi^*j^*\bT_{Z/W}$. But by general properties of Cartesian squares, $\bT_{X/W} \cong i^*\bT_{Y/W} \oplus j^*\bT_{Z/W}$, and hence we get that $\phi^*\bT_{X/W} \cong \phi^*i^*\bT_{Y/W} \oplus \phi^*j^*\bT_{Z/W}$. We conclude by observing that the fiber of the map
$$\bT_{K/W} \to \phi^*\bT_{X/W}$$
is equivalent to $\bT_{K/X}$.
\end{proof}

We have the following analogue of Proposition \ref{prop:affineintersections} for general derived stacks.
\begin{thm}\label{thm:generalintersections}
Consider a diagram
$$\xymatrix @R=.5pc {
L_1 \ar[dr]^{f}& & L_2\ar[dl]_{g} \\
&X&\\
}$$
of derived Artin stacks. Suppose we are given $n$-shifted coisotropic structures $\gamma_1 \in \Cois(f,n)$ and $\gamma_2 \in \Cois(g,n)$ on the morphisms $f$ and $g$, such that the $n$-shifted Poisson structures on $X$ coincide. Then the derived intersection $Y := L_1 \times_X L_2$ carries a natural $(n-1)$-shifted Poisson structure, such that the map
$$ Y \longrightarrow L_1 \times L_2$$
is a morphism of $(n-1)$-shifted Poisson derived stacks, where $L_1$ is equipped with the opposite Poisson structure. 
\end{thm}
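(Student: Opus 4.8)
The plan is to reduce the global statement to the affine case handled in Proposition~\ref{prop:affineintersections} by means of formal localization, exactly as in the reduction carried out in the proof of Theorem~\ref{thm:spaceofcoisotropics2} and Theorem~\ref{thm:graphpoisson}. First I would pass to the de Rham stacks: the intersection $Y = L_1\times_X L_2$ projects to $Y_{DR}$, and over a point $\Spec A \to X_{DR}$ the fiber of $Y$ is the derived intersection of the fibers of $L_1$ and $L_2$. Concretely, working with the graded mixed algebras produced by formal localization, I claim there is an equivalence of commutative algebras in $\cC_Y$
\[
\cB_Y(\infty) \cong g_1^*\cB_{L_1}(\infty) \otimes_{h^*\cB_X(\infty)} g_2^*\cB_{L_2}(\infty),
\]
where $g_i\colon Y\to L_i$ and $h\colon Y\to X$ are the structure maps; this is the algebra-level incarnation of the base change $(L_1\times_X L_2)_A \cong (L_1)_A \times_{X_A} (L_2)_A$, checked pointwise just as in the proof of Theorem~\ref{thm:graphpoisson}. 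The two coisotropic structures $\gamma_1,\gamma_2$, pulled back to $\cC_Y$, give coisotropic structures on $h^*\cB_X(\infty)\to g_i^*\cB_{L_i}(\infty)$ with matching $\bP_{n+1}$-structure on $h^*\cB_X(\infty)$ (because the Poisson structures on $X$ agree).

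Next I would apply the affine coisotropic intersection result, Proposition~\ref{prop:affineintersections}, inside the symmetric monoidal $\infty$-category $\cC_Y$ (which is presentable and, by the remark after Lemma in Section~2.2, can be modelled by a symmetric monoidal model category of the kind required in \cite{MS1}, so the affine machinery applies verbatim). This produces a $\bP_n$-structure on $g_1^*\cB_{L_1}(\infty)\otimes_{h^*\cB_X(\infty)} g_2^*\cB_{L_2}(\infty) \cong \cB_Y(\infty)$ as a $\bD_{Y_{DR}}(\infty)$-linear algebra, i.e. by Definition~\ref{def:poissonstack} an $(n-1)$-shifted Poisson structure on $Y$. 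Moreover Proposition~\ref{prop:affineintersections} also yields that the map $(g_1^*\cB_{L_1}(\infty))^{op}\otimes g_2^*\cB_{L_2}(\infty)\to \cB_Y(\infty)$ is a morphism of $\bP_n$-algebras, which upon taking global sections over the de Rham stack is exactly the statement that $Y\to \overline{L}_1\times L_2$ is a morphism of $(n-1)$-shifted Poisson stacks (here $\overline{L}_1$ carries the opposite structure). A small compatibility point to address is that the tensor product $g_1^*\cB_{L_1}(\infty)\otimes g_2^*\cB_{L_2}(\infty)$ computed in $\cC_Y$ models $\cB_{L_1\times L_2}(\infty)$ pulled back to $Y_{DR}$; this is again a pointwise base-change check, and the Künneth-type identification of sheaves of principal parts for a product was already used in Theorem~\ref{thm:graphpoisson}.

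The main obstacle I anticipate is the first step: establishing the base-change equivalence $\cB_Y(\infty)\cong g_1^*\cB_{L_1}(\infty)\otimes_{h^*\cB_X(\infty)}g_2^*\cB_{L_2}(\infty)$ at the level of graded mixed algebras \emph{with the twist}, and checking that it is compatible with the $\bD_{Y_{DR}}(\infty)$-linear structures. The untwisted statement $|\cB_Y|\cong |g_1^*\cB_{L_1}|\otimes_{|h^*\cB_X|}|g_2^*\cB_{L_2}|$ is immediate from $|\cB_{(-)}| = q_*\cO_{(-)}$ and the fact that derived fiber products of stacks are compatible with formal completions, but one genuinely needs the enhanced, twisted statement for Definition~\ref{def:poissonstack} to apply, and one should invoke the flatness/base-change properties of formal localization from \cite[Section 2]{CPTVV}, and Lemma~\ref{lm:cartesianT} to see that the relative tangent complexes fit into the expected Cartesian square so that the polyvector descriptions also match. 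Once that identification is in hand, the rest is a formal transfer of the affine argument through formal localization; no new homotopical input beyond what is already used for Theorems~\ref{thm:spaceofcoisotropics2} and~\ref{thm:graphpoisson} is required.
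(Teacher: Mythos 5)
Your proposal follows essentially the same route as the paper's proof: reduce to the affine statement (Proposition \ref{prop:affineintersections}) applied to the pulled-back principal-parts algebras over $Y_{DR}$, with the whole weight of the argument resting on the base-change equivalence $\cB_Y(\infty)\cong j^*\cB_{L_1}(\infty)\otimes_{i^*g^*\cB_X(\infty)}i^*\cB_{L_2}(\infty)$, which the paper also verifies pointwise on $A$-points of $Y_{DR}$ using the fact that the twist by $k(\infty)$ and the forgetful functor to commutative algebras both preserve colimits, together with algebraisability, \cite[Theorem 2.2.2]{CPTVV}, and Lemma \ref{lm:cartesianT}. You correctly isolate this base-change identification as the main technical point and name the right tools for it, so the proposal is sound and matches the paper's argument.
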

\begin{proof}
The Cartesian diagram of stacks
$$
\xymatrix{
Y \ar^{j}[r] \ar^{i}[d] & L_1\ar^-{f}[d] \\
L_2 \ar^-{g}[r] & X
}
$$
induces a commutative square of $\bD_{Y_{DR}}(\infty)$-algebras
$$
\xymatrix{
j^*f^*\cB_X(\infty) \cong i^*g^*\cB_X(\infty) \ar[r] \ar[d] & j^*\cB_{L_1}(\infty) \ar[d] \\
i^*\cB_{L_2}(\infty) \ar[r] & \cB_Y(\infty)
}
$$

By definition the two coisotropic structures $\gamma_1$ and $\gamma_2$ produce two $\bP_{[n+1,n]}$-structures on the maps
$$j^*f^*\cB_X(\infty) \to j^*\cB_{L_1}(\infty) \text{ \ and \ } i^*g^*\cB_X(\infty) \to i^*\cB_{L_2}(\infty)$$
so that by Proposition \ref{prop:affineintersections} we obtain a natural $\bP_n$-structure on the coproduct
$$j^*\cB_{L_1}(\infty) \otimes_{i^*g^*\cB_X(\infty)} i^*\cB_{L_2}(\infty) \ .$$
Our goal is now to show that this coproduct is actually equivalent to $\cB_Y(\infty)$, which would immediately conclude the proof.
Notice that the twist by $k(\infty)$ commutes with colimits, so that is enough to show that
$$j^*\cB_{L_1} \otimes_{i^*g^*\cB_X} i^*\cB_{L_2} \cong \cB_Y$$
as $\bD_{Y_{DR}}$-algebras.

Let $\Spec A \to Y_{DR}$ be an $A$-point of $Y_{DR}$. We want to prove that $j^*\cB_{L_1} \otimes_{i^*g^*\cB_X} i^*\cB_{L_2}$ and $\cB_Y$ coincide on the point $\Spec A \to Y_{DR}$. By definition, the value of $\cB_Y$ on this point is $\bD(Y_A)$, where $Y_A$ is the perfect formal derived stack over $\Spec A$ constructed as the fiber product
$$
\xymatrix{
Y_A \ar[r] \ar[d] & Y \ar[d] \\
\Spec A \ar[r] &Y_{DR}
}
$$
Since the $(-)_{DR}$ construction is defined as a right adjoint, it automatically commutes with limits, so that $Y_{DR} \cong L_{1, DR} \times_{X_{DR}} L_{2, DR}$. In particular, any $A$-point of $Y_{DR}$ has corresponding $A$-points of $L_{1, DR}, L_{2, DR}$ and $X_{DR}$, for which one can define fibers $L_{1, A}, L_{2, A}$ and $X_A$. Therefore, we need to show that
$$\bD(Y_A) \cong \bD(L_{1, A}) \otimes_{\bD(X_A)} \bD(L_{2, A})$$
as graded mixed dg algebras.

We start by remarking that the fiber square
$$
\xymatrix{
Y_A \ar[r] \ar[d] & L_{1, A} \ar[d] \\
L_{2, A} \ar[r] & X_A
}
$$
induces a map of graded mixed cdgas
$$\bD(L_{1, A}) \otimes_{\bD(X_A)} \bD(L_{2, A}) \to \bD(Y_A)$$
by the universal property of the coproduct. In order to prove that this map is an equivalence, it is enough to check it at the level of commutative algebras, forgetting the graded mixed structures. But the forgetful functor
$$\bCAlg(\bdg_k^{gr,\epsilon}) \longrightarrow \bCAlg(\bdg_k)$$
comes by definition from the forgetful functor
$$\comod_B(\bdg_k) \longrightarrow \bdg_k$$
where $B$ is the bialgebra $B= k[t,t^{-1} ]\otimes_k k[x]$ and $\comod_B(\bdg_k)$ is the category of $B$-comodules in $\bdg_k$, as explained in \cite[Section 1.2]{MS1}. In particular, this means that forgetting the graded mixed structure preserves colimits, so that the underlying commutative algebra of the pushout of
$$
\xymatrix{
\bD(X_A) \ar[r] \ar[d] & \bD(L_{1, A}) \\
\bD(L_{2, A})& 
}
$$
is exactly the tensor product of commutative algebras $\bD(L_{1, A}) \otimes_{\bD(X_A)} \bD(L_{2, A})$. Since the stacks $X_A, L_{1, A}, L_{2, A}$ are all \emph{algebraisable} (in the sense of Section 2.2 of \cite{CPTVV}), by \cite[Theorem 2.2.2]{CPTVV} we have equivalences of commutative algebras
$$\bD(L_{1, A}) \otimes_{\bD(X_A)} \bD(L_{2, A}) \cong \Sym_{A^{red}}(\bL_{A^{red}/L_{1, A}}[-1])\otimes_{  \Sym_{A^{red}}(\bL_{A^{red}/X_A}[-1])} \Sym_{A^{red}}(\bL_{A^{red}/L_{2, A}}[-1])$$
$$\bD(Y_A) \cong  \Sym_{A^{red}}(\bL_{A^{red}/Y_A}[-1])$$

We can now just apply Lemma \ref{lm:cartesianT} to the diagram of algebraisable stacks
$$
\xymatrix{
\Spec(A^{red}) \ar[r]  & Y_A \ar[r] \ar[d] & L_{1, A} \ar[d] \\
& L_{2, A} \ar[r] & X_A 
}
$$
and get a Cartesian square of $A^{red}$-modules
$$
\xymatrix{
\bT_{A^{red}/Y_A} \ar[r] \ar[d] & \bT_{A^{red}/L_{1, A}} \ar[d] \\
\bT_{A^{red}/L_{2, A}} \ar[r] & \bT_{A^{red}/X_A} 
}
$$
From this we deduce a pushout diagram of $A^{red}$-algebras
$$
\xymatrix{
\Sym_{A^{red}}(\bL_{A^{red}/X_A}[-1]) \ar[r] \ar[d] & \Sym_{A^{red}}(\bL_{A^{red}/L_{1, A}}[-1]) \ar[d] \\
\Sym_{A^{red}}(\bL_{A^{red}/L_{2, A}}[-1]) \ar[r] & \Sym_{A^{red}}(\bL_{A^{red}/Y_A} [-1])
}
$$
which is exactly what we wanted.
\end{proof}

\begin{example}
Let $G$ be an affine algebraic group. Assume that $\B G$ carries a 1-shifted Poisson structure and the basepoint $\pt\rightarrow \B G$ carries a coisotropic structure. By Theorem \ref{thm:generalintersections} we obtain an ordinary Poisson structure on $G\cong \pt\times_{\B G} \pt$ which is easily seen to be multiplicative, i.e. $G$ carries a Poisson-Lie structure. It is shown in \cite[Corollary 2.11]{Sa3} that in fact the space of 1-shifted coisotropic structures on $\pt\rightarrow \B G$ is equivalent to the set of Poisson-Lie structures on $G$.
\end{example}

\begin{remark}
Following Remark \ref{rmk:coiscorrespondences}, one could generalize Proposition \ref{prop:coiscorrespondences} to possibly non-affine coisotropic correspondences: this will likely lead to a construction of the full $(\infty,m)$-category of coisotropic correspondences, even in the non-affine case. Namely, there exists an $(\infty, m)$-category whose objects are $n$-shifted Poisson stacks, and whose $i$-morphisms are $i$-fold coisotropic correspondences. This has to be considered as a derived incarnation of the so-called \emph{Poisson category} studied by Weinstein \cite{We}. We plan to give a detailed construction of this category in a future paper.
\end{remark}

\section{Non-degenerate coisotropic structures}
\label{sect:nondegenerate}

The purpose of this section is to introduce the notion of non-degeneracy of a coisotropic structure. This is a relative version of non-degenerate Poisson structures, as treated in \cite{CPTVV} or \cite{Pri1}. Our main result is a proof of \cite[Conjecture 3.4.5]{CPTVV}, stating that the space of non-degenerate coisotropic structures is equivalent to the space of Lagrangian structures in the sense of \cite{PTVV}.% We will follow the same strategy used in \cite{CPTVV} to prove that non-degenerate Poisson structures are equivalent to symplectic structures.

\subsection{Definition of non-degeneracy}

We begin by first looking at the affine case. Recall the following notion from \cite[Corollary 1.4.24]{CPTVV}.
\begin{defn}\label{def:non-degpoisson}
Let $A$ be a commutative algebra in $\cM$ such that $\bL_A^{int}$ is a dualizable $A$-module. Suppose moreover that $A$ is equipped with an $n$-shifted Poisson structure. We say that it is \emph{non-degenerate} if the induced morphism
\[\pi^\sharp_A\colon \bL_A\longrightarrow \bT_A[-n]\]
is an equivalence.
\end{defn}
Suppose we have a cofibrant algebra $A \in \CAlg_M$. Then in this case we adopt the convention that $\pi^\sharp_A(f \ddr g) = \pm f [\pi_2, g]$, where $\pi_2$ is the underlying bivector of $\pi$, and $\pm$ is the Koszul sign.

Equivalently, as in \cite[Definition 1.4.18]{CPTVV}, a $\bP_{n+1}$-algebra $A$ is non-degenerate if the morphism
\[\bDR^{int}(A) \lra \bfPol^{int}(A,n)\]
induced by the Poisson bracket is an equivalence in $\cM^{gr}$.

We now deal with the case of relative Poisson algebras. Let $(A,B)$ be a $\bP_{[n+1,n]}$-algebra in $\cM$, and let $f:A \to B$ be the underlying morphism in $\bCAlg_{\cM}$. Using the description of $\bP_{[n+1, n]}$-structures in terms of relative polyvectors we see that the induced map
\[\bL_{B/A}[-1]\longrightarrow f^*\bL_A\xrightarrow{f^*\pi^\sharp_A} f^*\bT_A[-n]\longrightarrow \bT_{B/A}[-n+1]\]
is null-homotopic. Therefore, we get a morphism of fiber sequences of $B$-modules:
\begin{equation}
\xymatrix{
\bL_{B/A} [-1] \ar[r] \ar@{.>}[d] & f^*\bL_A \ar[r] \ar^{f^*\pi_A^\sharp}[d] & \bL_B \ar@{.>}[d] \\
\bT_B[-n] \ar[r] & f^*\bT_A[-n] \ar[r] & \bT_{B/A}[-n+1]. 
}
\label{eq:nondegeneratediagramaffine}
\end{equation}

Note that if $\pi_A^\sharp$ and one of the dotted maps are equivalences, so is the other dotted map.

\begin{defn}\label{def:non-degcoisotropics}
Let $f\colon A\rightarrow B$ be a morphism of commutative algebras in $\cM$ equipped with an $n$-shifted coisotropic structure.

\begin{itemize}
\item We say that the coisotropic structure is \emph{non-degenerate} if the $n$-shifted Poisson structure on $A$ is non-degenerate and one of the dotted maps in diagram \eqref{eq:nondegeneratediagramaffine} is an equivalence.

\item The space $\Cois^{nd}(f, n)$ of \emph{non-degenerate $n$-shifted coisotropic structures} on $f$ is the subspace of $\Cois(f, n)$ consisting of non-degenerate points.
\end{itemize}
\end{defn}

We will now generalize the notion of non-degeneracy to stacks. Let $f\colon L \to X$ be a morphism of derived Artin stacks. Suppose we are given an $n$-shifted coisotropic structure on $f$ in the sense of Definition \ref{def:generalcoisotropics}. This means that, in particular, we have a map of graded dg Lie algebras
$$k(2)[-1] \longrightarrow \bfPol(X,n)[n+1]$$
such that the induced map
$$k(2)[-1] \longrightarrow \bfPol(L/X,n-1)[n+1]$$
is homotopic to zero. Looking at weight 2 components, the shifted Poisson structure on $X$ induces by adjunction a morphism of perfect complexes on $X$
$$ \pi^{\sharp}_X\colon\ \bL_X \to \bT_X[-n]\ ,$$
and the coisotropic condition implies that the induced map $\bL_{L/X} \to \bT_{L/X}[-n+2]$ is homotopic to zero. This in turn implies the existence of dotted arrows in the diagram
\begin{equation}
\xymatrix{
\bL_{L/X} [-1] \ar[r] \ar@{.>}[d] & f^*\bL_X \ar[r] \ar[d]^{\pi^{\sharp}_X} & \bL_L \ar@{.>}[d] \\
\bT_L[-n] \ar[r] & f^*\bT_X[-n] \ar[r] & \bT_{L/X}[-n+1] 
}
\label{eq:nondegeneratediagramstacks}
\end{equation}
where both horizontal rows are fiber sequences of perfect complexes on $L$.

\begin{defn}\label{def:non-degcoisotropics2}
Let $f\colon L\rightarrow X$ be a morphism of derived Artin stacks equipped with an $n$-shifted coisotropic structure.

\begin{itemize}
\item We say that the coisotropic structure is \emph{non-degenerate} if the $n$-shifted Poisson structure on $X$ is non-degenerate and one of the dotted maps in diagram \eqref{eq:nondegeneratediagramstacks} is an equivalence.

\item We denote the subspace of non-degenerate points by $\Cois^{nd}(f, n)\subset \Cois(f, n)$.
\end{itemize}
\end{defn}

\begin{example}\label{ex:ndcois=lagr}
Suppose $i\colon L\hookrightarrow X$ is a smooth closed coisotropic subscheme of a smooth scheme $X$ carrying a ($0$-shifted) Poisson structure. Then the diagram \eqref{eq:nondegeneratediagramstacks} becomes
\[
\xymatrix{
\rN^*_{L/X} \ar[r] \ar@{.>}[d] & i^*\T^*_X \ar[r] \ar[d]^{\pi^\sharp_X} & \T^*_L \ar@{.>}[d] \\
\T^*_L \ar[r] & i^*\T_X \ar[r] & \rN_{L/X}
}
\]

The bivector $\pi_X$ is non-degenerate iff it underlines a symplectic structure. The coisotropic structure on $i$ is non-degenerate iff $\T^*_L\rightarrow \rN_{L/X}$ is an isomorphism which is the case precisely when $L\hookrightarrow X$ is Lagrangian.
\end{example}

By Theorem \ref{thm:spaceofcoisotropics2}, the datum of a coisotropic structure on $f\colon L\to X$ is equivalent to the datum of a $\bP_{n+1}$-structure on $\cB_X(\infty)$ and a compatible $n$-shifted coisotropic structure on the map $f^*\cB_X(\infty) \to \cB_L(\infty)$ in the category of $\bD_{L_{DR}}(\infty)$-modules.

\begin{cor}
Let $f\colon L \to X$ be a morphism of derived Artin stacks. An $n$-shifted coisotropic structure on $f$ is non-degenerate in the sense of Definition \ref{def:non-degcoisotropics2} if and only if the corresponding $n$-shifted Poisson structure on $\cB_X(\infty)$ and the $n$-shifted coisotropic structure on $f^*\cB_X(\infty) \to \cB_L(\infty)$ are both non-degenerate in the sense on Definitions \ref{def:non-degpoisson} and \ref{def:non-degcoisotropics}.
\end{cor}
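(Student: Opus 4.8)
The plan is to reduce the statement about derived Artin stacks to the already-established affine statements by using the formal localization machinery of \cite{CPTVV}. Recall that by Theorem \ref{thm:spaceofcoisotropics2} a coisotropic structure on $f\colon L\to X$ consists of a $\bP_{n+1}$-structure on $\cB_X(\infty)$ in $\cC_X$ together with a compatible $n$-shifted coisotropic structure on the map $f^*\cB_X(\infty)\to\cB_L(\infty)$ in $\cC_L$. So there are really two non-degeneracy conditions to match up: one for the $\bP_{n+1}$-algebra $\cB_X(\infty)$ (which should match non-degeneracy of the Poisson structure on $X$) and one for the relative structure on $f^*\cB_X(\infty)\to\cB_L(\infty)$ (which should match non-degeneracy of the coisotropic structure on $f$).

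First I would handle the absolute case, i.e. recall (from \cite{CPTVV}, or the discussion preceding Definition \ref{def:non-degpoisson}) that the $n$-shifted Poisson structure on $X$ is non-degenerate in the sense of Definition \ref{def:non-degcoisotropics2} precisely when $\cB_X(\infty)$ is non-degenerate as a $\bP_{n+1}$-algebra in the sense of Definition \ref{def:non-degpoisson}; this is essentially \cite[Conjecture/Theorem 3.4.5]{CPTVV} in the absolute case and follows because formal localization identifies the map $\bL_X\to\bT_X[-n]$ with the globalization over $X_{DR}$ of the internal map $\bL^{int}_{\cB_X(\infty)}\to\bT^{int}_{\cB_X(\infty)}[-n]$, and an arrow of perfect complexes on $X$ is an equivalence iff its image under the (conservative, by fully faithfulness of the relevant functor) formal-localization functor is. The same argument, applied to the relative cotangent complexes $\bL_{L/X}$ and $\bL_L$ and their internal counterparts over $L_{DR}$, shows that the two dotted arrows in diagram \eqref{eq:nondegeneratediagramstacks} are globalizations of the corresponding dotted arrows in diagram \eqref{eq:nondegeneratediagramaffine} for $f^*\cB_X(\infty)\to\cB_L(\infty)$ in $\cC_L$. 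Hence one dotted arrow in \eqref{eq:nondegeneratediagramstacks} is an equivalence iff the corresponding one in \eqref{eq:nondegeneratediagramaffine} is.

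Combining these two observations gives the corollary: the coisotropic structure on $f$ is non-degenerate (i.e. the Poisson structure on $X$ is non-degenerate and a dotted arrow in \eqref{eq:nondegeneratediagramstacks} is an equivalence) iff $\cB_X(\infty)$ is a non-degenerate $\bP_{n+1}$-algebra and a dotted arrow in \eqref{eq:nondegeneratediagramaffine} for $f^*_{\cB}(\infty)$ is an equivalence, which is exactly the condition that the $n$-shifted Poisson structure on $\cB_X(\infty)$ and the $n$-shifted coisotropic structure on $f^*\cB_X(\infty)\to\cB_L(\infty)$ are both non-degenerate in the sense of Definitions \ref{def:non-degpoisson} and \ref{def:non-degcoisotropics}. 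The main point to be careful about — the part I expect to be the real content rather than bookkeeping — is the compatibility of the formal-localization functors with dualization of the cotangent complex, i.e. that the natural map $\bT_X\to$ (globalization of $\bT^{int}_{\cB_X(\infty)}$) is an equivalence, so that $\pi^\sharp_X$ genuinely corresponds to $\pi^\sharp_{\cB_X(\infty)}$ after globalization; once this is granted (it is part of the formal localization package of \cite[Section 2]{CPTVV}, and is exactly what underlies the identification already used in Proposition \ref{prop:relpolyvectorsstacks}), conservativity of the globalization functor on the relevant perfect complexes does the rest. I would therefore present the proof as a short deduction citing \cite{CPTVV} for the absolute non-degeneracy comparison and for the dualizability/conservativity inputs, and then observe that the relative statement follows by applying the same reasoning in $\cC_L$ together with Theorem \ref{thm:spaceofcoisotropics2}.
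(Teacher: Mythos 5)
Your proposal is correct and is essentially the argument the paper has in mind: the paper's own justification is a one-line appeal to the correspondence between geometric differential calculus on derived stacks and algebraic differential calculus on the prestacks of principal parts from \cite{CPTVV}, which is exactly the formal-localization identification of $\bL_X$, $\bT_X$, $\bL_{L/X}$ (and the maps $\pi^\sharp$) with their internal counterparts that you spell out. You have simply made explicit the conservativity and dualization-compatibility points that the paper leaves implicit.
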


This is an immediate consequence of the general correspondence between geometric differential calculus on derived stacks and algebraic differential calculus on the associated prestacks of Tate principal parts as described in \cite{CPTVV}.

Alternatively, one can avoid using twists by $k(\infty)$ as follows. Consider the prestack of graded mixed commutative algebras $f^*\cB_X$ and define the space of \emph{Tate $n$-shifted Poisson structures} on $f^*\cB_X$ to be 
$$\Pois^t(f^*\cB_X,n) := \Map_{\alg_{\Lie}^{gr}}(k(2)[-1], \bfPol^t(f^*\cB_X,n)[n+1]),$$
where $\bfPol^t(f^*\cB_X, n)$ is the Tate realization of the prestack of bigraded mixed $\bP_{n+2}$-algebras $\bfPol^{int}(f^*\cB_X, n)$. We have an equivalence of graded $\bP_{n+2}$-algebras
\[\bfPol^t(f^*\cB_X, n)\cong \bfPol(f^*\cB_X(\infty), n)\]
and hence an equivalence of spaces
\[\Pois^t(f^*\cB_X, n)\cong \Pois(f^*\cB_X(\infty), n).\]

Similarly, consider the map of prestacks of graded mixed algebras $f^*_{\cB}\colon f^*\cB_X \to \cB_L$,
and define the space of \emph{Tate $n$-shifted coisotropic structures} on $f^*_{\cB}$ to be
\[\Cois^t(f^*_\cB,n) := \Map_{\alg_{\Lie}^{gr}}(k(2)[-1], \bfPol^t(f^*_{\cB},n)[n+1]).\]
Again, we have an equivalence
\[\Cois^t(f^*_{\cB},n) \simeq \Cois(f^*_{\cB}(\infty),n).\]

We also have obviously defined subspaces of non-degenerate structures $\Pois^{t,nd}(f^*\cB_X,n)$ and $\Cois^{t,nd}(f^*_{\cB},n)$.
Notice that by definition we have a Cartesian square
\[
\xymatrix{
\Cois^{nd}(f,n) \ar[r] \ar[d] & \Cois^{t,nd}(f^*_{\cB},n) \ar[d] \\
\Pois^{t,nd}(\cB_X,n) \ar[r] & \Pois^{t,nd}(f^*\cB_X,n).
}
\]

\subsection{Symplectic and Lagrangian structures}

We recall the notions of shifted symplectic and shifted Lagrangian structures, defined and studied in \cite{PTVV} and \cite{CPTVV}.

\begin{defn}
Let $A \in \bCAlg_{\cM}$. The \emph{space of closed $2$-forms of degree $n$ on $A$} is
$$\mathcal A^{2,cl}(A,n):= \Map_{\dgmix}(k(2)[-1], \bDR(A)[n+1]).$$
\end{defn}
In particular, every closed 2-form $\omega$ of degree $n$ has an underlying 2-form $\omega_2 \in \Sym^2_A(\bL_A[-1])$. If the $A$-module $\bL_A$ is dualizable, this in turn gives rise to a morphism
$$\omega^{\sharp}\colon \bT_A \to \bL_A [n].$$

Explicitly, suppose $A$ is a cofibrant commutative algebra in $M$, and suppose the underlying 2-form is written as $\omega_2 = \sum_{i,j} \omega_{ij} \ddr a_i \ddr a_j$. Then our convention is that
\[\omega^\sharp(v) = \pm\sum_{i, j} 2\omega_{ij} [v, a_i] \ddr a_j,\]
where $v\in\bT_A$ and $\pm$ is the Koszul sign.

\begin{defn}
Let again $A \in \bCAlg_{\cM}$, and suppose moreover that the cotangent complex $\bL_A$ is dualizable.
\begin{itemize}
\item We say that a point $\omega \in \mathcal A^{2,cl}(A,n)$ is \emph{non-degenerate} if the above map $\omega^{\sharp}$ is an equivalence.
\item The space $\Symp(A,n)$ of \emph{$n$-shifted symplectic structures} on $A$ is the subspace of $\mathcal A^{2,cl}(A,n)$ of non-degenerate forms.
\end{itemize}
\end{defn}

Suppose now $f\colon A \to B$ is a morphism in $\bCAlg_{\cM}$. There is an induced map $\bDR(A)\to \bDR(B)$ of graded mixed cdgas and denote by $\bDR(f)$ the fiber of this map.
\begin{defn}
With notations as above, the space $\Isot(f,n)$ of \emph{$n$-shifted isotropic structures} is
$$\Isot(f,n) := \Map_{\dgmix}(k(2)[-1], \bDR(f)[n+1]).$$
\end{defn}
Informally, elements of $\Isot(f,n)$ are closed 2-forms of degree $n$ on $A$, whose restriction to $B$ is homotopic to zero. In other words, there is a fiber sequence of spaces
$$\Isot(f,n) \to \mathcal A^{2,cl}(A,n) \to \mathcal A^{2,cl}(B,n).$$

Let $\la \in \Isot(f,n)$, and suppose that $\bL_A$ and $\bL_B$ are both dualizable. The point $\la$ produces a map $\bT_A \to \bL_A[n]$ of $A$-modules, such that the composite
\[\bT_B\longrightarrow f^*\bT_A\xrightarrow{f^*\omega_A^\sharp} f^*\bL_A[n]\longrightarrow \bL_B[n]\]
is null-homotopic. This yields a diagram of $B$-modules
\begin{equation}
\xymatrix{
\bT_B \ar[r] \ar@{.>}[d] & f^*\bT_A \ar[r] \ar^{f^*\omega_A^\sharp}[d] & \bT_{B/A}[1] \ar@{.>}[d] \\
\bL_{B/A}[n-1] \ar[r] & f^*\bL_A[n] \ar[r] & \bL_B[n],
}
\label{eq:lagrangiandiagramaffine}
\end{equation}
where both rows are fiber sequences. As before, if $\omega_A^\sharp$ and one of the dotted maps are equivalences, so is the other dotted map.

\begin{defn}
Let $f\colon A\to B$ be a morphism in $\bCAlg_{\cM}$ and suppose both $\bL_A$ and $\bL_B$ are dualizable. \begin{itemize}
\item We say that a point $\la \in \Isot(f,n)$ is \emph{non-degenerate} if $\omega_A^\sharp$ and of the dotted maps in diagram \eqref{eq:lagrangiandiagramaffine} is an equivalence.

\item The space $\Lagr(f,n)$ of \emph{$n$-shifted Lagrangian structures} is the subspace of $\Isot(f,n)$ consisting of non-degenerate points.
\end{itemize}
\end{defn}

These algebraic notions can be used to introduce the concepts of symplectic and Lagrangian structures for general derived stacks.

\begin{defn}
Let $X$ be a derived Artin stack. The space $\Symp(X,n)$ of $n$-shifted symplectic structures on $X$ is
$$\Symp(X,n) := \Symp(\cB_X,n),$$
where we regard $\cB_X$ as a commutative algebra in the category of $\bD_{X_{DR}}$-modules.
\end{defn}

\begin{remark}
By \cite[Proposition 2.4.15]{CPTVV} this notion recovers the original global definition of an $n$-shifted symplectic structure given by \cite[Definition 1.18]{PTVV}.
\end{remark}

The following is \cite[Theorem 3.2.4]{CPTVV} and \cite[Theorem 3.33]{Pri1}.
\begin{thm}\label{thm:poisnd=symp}
Let $X$ be a derived Artin stack. There is an equivalence of spaces
\[ \Pois^{nd}(X) \simeq \Symp(X,n). \]
\end{thm}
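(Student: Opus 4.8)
The theorem is \cite[Theorem 3.2.4]{CPTVV} (see also \cite[Theorem 3.33]{Pri1}), so the plan is to recall the structure of its proof. The first step is to reduce the global statement to a computation with graded mixed cdgas via formal localization. By definition $\Symp(X, n) = \Symp(\cB_X, n)$, and by Definition \ref{def:poissonstack} together with Theorem \ref{thm:spaceofpoisson} the space $\Pois(X, n)$ is computed by the Tate polyvectors $\bfPol(X, n) = \Gamma(X_{DR}, \bfPol^t(\cB_X/\bD_{X_{DR}}, n))$, with the non-degenerate locus $\Pois^{nd}(X, n)$ cut out by the condition that the induced map $\bL_X \to \bT_X[-n]$ be an equivalence, which is checked on perfect complexes and hence prestack-wise over $X_{DR}$. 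Using that taking global sections over $X_{DR}$ commutes with the relevant limits, and that (exactly as for the Tate $n$-shifted coisotropic structures discussed above) one may replace the $k(\infty)$-twisted objects by the corresponding Tate realizations, the statement reduces to the following local assertion: for a graded mixed cdga $A$ with dualizable cotangent complex, the non-degenerate points of $\Map_{\dgmix}(k(2)[-1], \bDR(A)[n+1])$ are equivalent to the non-degenerate points of $\Map_{\alg_{\Lie}^{gr}}(k(2)[-1], \bfPol(A, n)[n+1])$.

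To prove this local comparison one introduces, following \cite{CPTVV}, the space of \emph{compatible pairs} $(\omega, \pi)$ consisting of a closed $2$-form $\omega$ of degree $n$ and an $n$-shifted Poisson structure $\pi$ on $A$ for which the induced morphisms $\omega^\sharp\colon \bT_A \to \bL_A[n]$ and $\pi^\sharp\colon \bL_A \to \bT_A[-n]$ are mutually inverse equivalences. There are forgetful maps from the space of compatible pairs to $\Symp(A, n)$ and to $\Pois^{nd}(A, n)$, and the assertion is that both are equivalences; a homotopy inverse of one composed with the other then yields the desired equivalence, natural in $A$. To see that both forgetful maps are equivalences one uses the reformulation of non-degeneracy recalled after Definition \ref{def:non-degpoisson}: a non-degenerate element induces, on cotangent complexes, an equivalence $\bL_A \xrightarrow{\sim} \bT_A[-n]$, and hence an equivalence of graded objects $\bDR^{int}(A)\xrightarrow{\sim}\bfPol^{int}(A, n)$; the content is to promote this to an equivalence of \emph{structured} objects, under which the mixed (de Rham) differential on $\bDR^{int}(A)$ is carried to the internal differential of $\bfPol^{int}(A, n)$ twisted by the bracket with the Poisson bivector. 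Granting this, a closed $2$-form with non-degenerate weight-$2$ part is exactly a Maurer--Cartan element of $\bfPol^{int}(A, n)^{\geq 2}[n+1]$ with non-degenerate weight-$2$ part, and one concludes by an induction along the weight filtration in which the step at weight $p\geq 3$ is a contractible linear lifting problem controlled by the two differentials.

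The main obstacle is this last compatibility: showing that the non-degeneracy equivalence $\bDR^{int}(A)\simeq\bfPol^{int}(A, n)$ intertwines the de Rham mixed structure with the Poisson-twisted differential --- equivalently, that the closedness condition on the symplectic side matches the Jacobi identity on the Poisson side. This is the heart of \cite[Section 1.4]{CPTVV}, and is also the content of the deformation-theoretic argument of \cite{Pri1}. An alternative, parallel to the treatment of non-degenerate coisotropic structures in the rest of this section, would replace the obstruction-theoretic argument by a Darboux-type normal form using the shifted Darboux lemma of \cite{JS}, reducing the claim to the elementary fact that a non-degenerate constant bivector and a constant degree-$n$ symplectic form determine one another.
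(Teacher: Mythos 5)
The paper does not actually prove this theorem: it is quoted from \cite[Theorem 3.2.4]{CPTVV} and \cite[Theorem 3.33]{Pri1}, so there is no internal proof to compare against. Your outline is a fair summary of the strategy of those references, and it also parallels the proof that this paper \emph{does} give for the relative analogue (Theorem \ref{thm:coisnd=lagr} in Section \ref{sect:nondegenerate}): reduction to graded mixed cdgas by formal localization, an intermediate space of compatible pairs mapping to both sides, and a weight-by-weight obstruction argument. One imprecision worth flagging: a compatible pair is not defined by asking that $\omega^\sharp$ and $\pi^\sharp$ be mutually inverse equivalences --- that is only the weight-$2$ shadow of the condition. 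The actual definition (compare Definition \ref{def:comp} and Lemma \ref{lm:compatibilityformula} in the coisotropic setting) requires a coherent homotopy between $\mu(\omega,\pi)$ and $\sigma(\pi)$ in the full polyvector algebra, in all weights. It is this stronger condition, combined with the fact that $\mu(-,\pi)$ is an equivalence from $\DR(A)$ onto $\Pol(A,n)$ with the $[\pi,-]$-twisted mixed structure when $\pi$ is non-degenerate, that makes the projection from compatible pairs to $\Pois^{nd}$ have contractible fibers (the analogue of Proposition \ref{prop:comp=cois}); with only the weight-$2$ condition the fiber would not be contractible. With that correction, your sketch is the standard argument.
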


In the relative case the definitions are analogous. Recall that given a map $f\colon L \to X$ of derived stacks we have an induced map
$$f^*_{\cB}\colon f^*\cB_X \to \cB_L$$
of commutative algebras in the category of $\bD_{L_{DR}}$-modules. Notice that any shifted symplectic structure on $X$ gives in particular a shifted symplectic structure on $f^*\cB_X$; in other words, there is a natural map of spaces
$$\Symp(X,n) \simeq \Symp(\cB_X,n) \longrightarrow \Symp(f^*\cB_X,n).$$

\begin{defn}
Let $f\colon L \to X$ be a map of derived Artin stacks. The space $\Lagr(f,n)$ of \emph{$n$-shifted Lagrangian structures} on $f$ is given by the pullback
\[
\xymatrix{
\Lagr(f, n) \ar[r] \ar[d] & \Lagr(f^*_{\cB},n) \ar[d] \\
\Symp(X,n) \ar[r] & \Symp(f^*\cB_X,n).
}
\]
\end{defn}

\subsection{Compatible pairs}\label{sect:compatiblepairs}

The remainder of the section is devoted to proving a derived analogue of Example \ref{ex:ndcois=lagr}.
In particular, we would like to compare the spaces $\Cois^{nd}(f,n)$ and $\Lagr(f,n)$. With this goal in mind, we begin by developing a general formalism of compatibility for pairs $(\ga,\la)$ formed by a coisotropic and a Lagrangian structure in a general symmetric monoidal $\infty$-category, following the approach of \cite{Pri4}. The notion of compatibility of an $n$-shifted Poisson and an $n$-shifted symplectic structure has previously appeared in \cite[Definition 1.4.20]{CPTVV} and \cite[Definition 1.20]{Pri1}.

Consider a map $f\colon A \to B$ in $\CAlg_{M}$, such that both $A$ and $B$ are cofibrant, and consider the graded homotopy $\bP_{[n+2,n+1]}$-algebra
$$(\bfPol(A,n), \bfPol(B/A,n-1))$$
constructed in \cite{MS1}. We pick a \emph{strict} graded $\bP_{[n+2,n+1]}$ algebra, which is equivalent to the latter, and we denote it by
\[ (\Pol(A,n), \Pol(B/A,n-1)). \]
We denote by $\Pol(f,n)$ the homotopy fiber of the underlying morphism of graded commutative algebras. Consider an element $\gamma \in \Pol(f,n)^{\geq 2}[n+2]$. By the results of \cite[Section 3.6]{MS1}, we know that $\gamma$ induces a pair of compatible $k$-linear derivations $\phi_{\gamma,A}$ and $\phi_{\gamma,B}$ on $\Pol(A,n)$ and $\Pol(B/A,n-1)$ respectively. Restricted to weight zero they fit in the diagram
\[
\xymatrix{
\Omega^1_A[-1] \ar[r] \ar^{\phi_{\gamma, A}}[d] & \Omega^1_B[-1] \ar^{\phi_{\gamma, B}}[d] \\
\Pol(A,n) \ar[r] & \Pol(B/A,n-1).
}
\]

Using the universal property of the symmetric algebra we obtain a diagram of commutative algebras
\[
\xymatrix{
\DR(A) \ar[r] \ar[d]^{\mu(-,\ga)_{A}} & \DR(B) \ar[d]^{\mu(-,\ga)_{B}} \\
\Pol(A,n) \ar[r] & \Pol(B/A,n-1).
}
\]

\begin{remark}
The construction above also applies to the general case of coisotropic structure on a morphism of derived Artin stacks. For example, let $L\hookrightarrow X$ be a smooth coisotropic submanifold of a Poisson manifold. Then Oh--Park \cite{OP} and Cattaneo--Felder \cite{CF} construct a certain homotopy $\bP_1$-algebra which as a graded commutative algebra coincides with $\Gamma(L, \Sym(\rN_{L/X}[-1]))$. We expect that it coincides with $\bfPol(L/X, -1)$ with the differential twisted by $\phi_{\gamma, L}$.
\end{remark}

The vertical arrows are the identity on weight 0 while their value on weight 1 generators is given by
$$\mu(a\ddr x, \ga)_A = a\phi_{\gamma,A}(x), \ \ \mu(b\ddr y, \ga)_B= b\phi_{\gamma,B}(y),$$
where $a, x \in A$ and $b,y \in B$. Moreover, if $\gamma$ satisfies the Maurer--Cartan equation, i.e. it corresponds to an $\infty$-morphism $k(2)[-1]\rightarrow \Pol(f, n)[n+1]$ of graded Lie algebras, the above diagram is a diagram of weak graded mixed algebras.

In particular, observe that every $\ga \in \Pol(f,n)[n+2]$ induces a map 
\[ \mu(-,\ga)\colon \DR(f) \to \Pol(f,n) \]
of commutative algebras, where $\DR(f)$ is the homotopy fiber of $\DR(A) \to \DR(B)$. Moreover, if $\ga$ defines a coisotropic structure then $\mu(-,\ga)$ is a map of graded mixed commutative algebras. 

Consider now the commutative dg algebra $k[\e]$, where $\e$ is of degree $0$ and satisfies $\e^2=0$. If $\g$ is a graded dg Lie algebra, then $\g \otimes k[\e]$ is still a graded dg Lie algebra, and there is a natural projection of graded Lie algebras $\g \otimes k[\e] \to \g$ sending $\e$ to zero. Let $\sigma\colon \g\rightarrow \g$ be the operator given by $p-1$ in degree $p$. Then $\id + \epsilon\sigma$ gives a section $\g\rightarrow \g \otimes k[\epsilon]$.

The projection $\g\otimes k[\e] \to \g$ induces a morphism of spaces
\[\Map_{\alg_{\Lie}^{gr}}(k(2)[-1], \g\otimes k[\e]) \to \Map_{\alg_{\Lie}^{gr}}(k(2)[-1], \g),\]
whose fibers have the following nice characterization.

\begin{lm}\label{lm:MCfiberk[e]}
The fiber of the map 
\[ \Map_{\alg_{\Lie}^{gr}}(k(2)[-1], \g\otimes k[\e]) \to \Map_{\alg_{\Lie}^{gr}}(k(2)[-1], \g),  \]
at a point corresponding to a Maurer--Cartan element $x\in \g^{\geq 2}$ is given by $\Map_{\dgmix}(k(2)[-1],\g_x)$, where $\g_x$ is the graded module $L$ equipped with the mixed structure $[x, -]$.
\end{lm}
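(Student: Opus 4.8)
The plan is to unwind both mapping spaces in terms of explicit Maurer--Cartan / twisted complexes and to match the fiber with the cohomology of a mixed complex. First I would recall that, since $k(2)[-1]$ is the trivial graded Lie algebra in weight $2$ and degree $1$, a point of $\Map_{\alg_{\Lie}^{gr}}(k(2)[-1], \h)$ for a graded dg Lie algebra $\h$ is (up to the usual homotopical bookkeeping, e.g. by \cite[Proposition 1.19]{MS1}) a Maurer--Cartan element of $\h^{\geq 2}[1]$, i.e. an element $y$ of $\h$ of degree $2$ and weight $\geq 2$ with $\d y + \tfrac12[y,y] = 0$. Applying this with $\h = \g\otimes k[\e]$ and writing $y = x' + \e z$ with $x'\in\g$ and $z\in\g$ of the appropriate degree/weight, the Maurer--Cartan equation splits, because $\e^2 = 0$, into the equation $\d x' + \tfrac12[x',x'] = 0$ for $x'$ together with the linear equation $\d z + [x', z] = 0$ for $z$. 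Thus an element of the fiber over the fixed Maurer--Cartan element $x\in\g^{\geq 2}$ is precisely a closed element $z$ of degree one less in the complex $\g$ equipped with the differential $\d + [x,-]$, and this differential squared is zero exactly because $x$ satisfies Maurer--Cartan.

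Next I would reorganize this: the operator $[x,-]$ raises weight by $1$ (our convention that Lie brackets in the graded setting have weight $-1$ means $[x,-]$ with $x$ of weight $\geq 2$ is weight-raising once we remember $x$ sits in weights $\geq 2$; more precisely the relevant piece is the weight-$1$ part of $x$ acting, which is the mixed structure), so $(\g, \d, [x,-])$ is exactly the graded module underlying $\g$ equipped with a mixed structure, which is what the statement calls $\g_x$. The identification of ``closed elements of $\g_x$ in the relevant degree and weight'' with the space $\Map_{\dgmix}(k(2)[-1], \g_x)$ is then again an instance of the same dictionary: a map out of $k(2)[-1]$ in graded mixed complexes is a weight-$2$, degree-$1$ element annihilated by the total differential $\d + \e$, which after the shift is exactly a closed element of $\g_x$ in weight $\geq 2$ and the appropriate degree. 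So the proof is essentially a comparison of two Maurer--Cartan descriptions, one for the trivial deformation problem in $\g\otimes k[\e]$ and one for the mixed complex $\g_x$.

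To make this precise and functorial I would use the section $\id + \e\sigma\colon \g\to \g\otimes k[\e]$ introduced just before the lemma: the point of $\sigma$ being $p-1$ in degree $p$ is precisely to make $\id + \e\sigma$ a morphism of graded \emph{mixed} complexes once $\g$ is given the mixed structure $[x,-]$ — this is the standard trick (as in \cite[Section 1.4]{CPTVV}) that converts a ``direction'' in a dg Lie algebra into a mixed structure while absorbing the degree/weight shift. Concretely I would check that for $z\in\g$ the element $\e\sigma(z)$, viewed in $\g\otimes k[\e]$, solves the linearized Maurer--Cartan equation over $x$ if and only if $z$ is $(\d + \e_x)$-closed in $\g_x$, where $\e_x = [x,-]$; this is a direct computation using $\sigma(\d z) = \d\sigma(z)$ shifted appropriately and $\sigma([x,z]) = $ (weight bookkeeping) $[x,\sigma(z)] + [x,z]$, the discrepancy being exactly the $\e$-term one wants. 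Then the fiber of the map of mapping spaces is, levelwise, the space of such $z$, which is by definition $\Map_{\dgmix}(k(2)[-1], \g_x)$, and one upgrades this from a bijection on points to an equivalence of spaces by noting both sides are the totalization/Maurer--Cartan space of the same (abelian, since linear in $z$) complex.

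The main obstacle, I expect, is the bookkeeping of the shift: keeping straight the interplay of the cohomological shift $[n+1]$ implicit in ``$\Map(k(2)[-1],-)$'', the weight grading, and the degree-dependent rescaling $\sigma = p - 1$, so that the linearized equation in $\g\otimes k[\e]$ genuinely matches the closedness condition in the mixed complex $\g_x$ with \emph{its} differential $\d + \e$. In particular one has to verify that the weight-$1$ component of $[x,-]$ is the correct mixed operator and that the higher-weight components of $x$ contribute only to the (already-fixed) base point and not to the fiber; this is where the condition $x\in\g^{\geq 2}$ rather than $x\in\g^{\geq 1}$ is used. Once the shifts are pinned down the rest is formal.
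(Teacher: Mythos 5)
The core of your argument --- the first two paragraphs together with the closing remark that both sides are the realization (Dold--Kan) space of the same abelian complex --- is correct, and it is precisely the ``straightforward computation'' the paper omits: writing a Maurer--Cartan element of $\g\otimes k[\e]$ as $y = x' + \e z$, the equation splits (using $\e^2=0$) into the Maurer--Cartan equation for $x'$ and the linear equation $\d z + [x', z]=0$; doing the same on $m$-simplices in $\Omega^\bullet(\Delta^m)\otimes\g^{\geq 2}\otimes k[\e]$ identifies the fiber over $x$ with the space of weight-$\geq 2$ cocycles for the total differential $\d + [x,-]$, which is exactly $\Map_{\dgmix}(k(2)[-1],\g_x)$. (Two bookkeeping slips: $z$ has the \emph{same} cohomological degree as $x$, since $\e$ has degree $0$, not ``degree one less''; and Maurer--Cartan elements sit in degree $1$ of the already-shifted Lie algebra $\g$.)

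However, two of the refinements you propose are wrong and would derail the proof if carried out. First, the section $\id+\e\sigma$ plays no role in this lemma (it is used afterwards to build the compatibility section $\Phi$), and the identity you want, $\sigma([x,z]) = [x,\sigma(z)] + [x,z]$, fails: with $\sigma$ acting by $p-1$ in weight $p$ and the bracket of weight $-1$, one finds $\sigma([x_q,z_p]) - [x_q,\sigma(z_p)] = (q-1)[x_q,z_p]$, which equals $[x_q,z_p]$ only for the weight-$2$ component of $x$. Consequently ``$\e\sigma(z)$ solves the linearized equation over $x$'' is \emph{not} equivalent to $z$ being $(\d+[x,-])$-closed; the fiber should be parametrized by $z\mapsto x+\e z$ directly. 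Second, your final claim that only the weight-$2$ component of $x$ furnishes the mixed structure and that the higher-weight components ``contribute only to the base point and not to the fiber'' is false: the linearized equation involves all of $[x,z]=\sum_{q\geq 2}[x_q,z]$, so $\g_x$ carries a \emph{weak} graded mixed structure with components in every weight $\geq 1$, all of which enter the identification with $\Map_{\dgmix}(k(2)[-1],\g_x)$. Neither detour is needed: your direct computation already proves the lemma.
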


The proof of this lemma is a straightforward computation, and we omit it.
Following \cite{Pri1}, it is convenient to introduce an auxiliary space.

\begin{defn}
Let again $f\colon A \to B$ be a morphism of commutative algebras in $M$. The \emph{tangent space of $n$-shifted coisotropic structures on $f$} is
\[ \TCois(f, n) = \Map_{\alg_{\Lie}^{gr}}(k(2)[-1], \Pol(f,n)[n+1]\otimes k[\e]). \]
\end{defn}

In particular, we get a natural map
\[ \id + \e \si\colon \Cois(f,n) \lra \TCois(f,n).\]
By Lemma \ref{lm:MCfiberk[e]} a point of $\TCois(f, n)$ is given by an $n$-shifted coisotropic structure $\gamma$ on $f$ together with a morphism of graded mixed complexes $k(2)[-1]\rightarrow \Pol_\gamma(f, n)[n+1]$, where $\Pol_\gamma(f, n)[n+1]$ is $\Pol(f, n)[n+1]$ equipped with the mixed structure $[\gamma, -]$.

Consider now the natural projection
\[ \TCois(f,n) \times \Lagr(f,n) \to \Cois(f,n) \times \Lagr(f,n), \]
which simply forgets the $\e$-component on $\TCois(f,n)$. The above constructions provide a section of this map.

\begin{lm}
The section
\[
\xymatrix @C=0.5pc @R=.3pc{
\Phi\colon \Cois(f,n) \times \Lagr(f,n) & \lra & \TCois(f,n) \times \Lagr(f,n) \\
(\gamma, \lambda) & \longmapsto & (\gamma + \epsilon(\sigma(\gamma) - \mu(\lambda, \gamma)), \lambda).
}
\]
is well-defined.
\end{lm}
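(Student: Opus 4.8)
The plan is to reduce the statement, via Lemma~\ref{lm:MCfiberk[e]}, to two elementary cocycle computations, one of which has in effect already been carried out above. Recall that Lemma~\ref{lm:MCfiberk[e]} identifies the fibre of the projection $\TCois(f,n)\to\Cois(f,n)$ over a coisotropic structure $\gamma$ with the mapping space $\Map_{\dgmix}(k(2)[-1],\Pol_\gamma(f,n)[n+1])$, where $\Pol_\gamma(f,n)[n+1]$ carries the mixed structure $[\gamma,-]$; in particular this fibre is \emph{linear}, and a point of it is the datum of a weight $\geq 2$ element of $\Pol(f,n)[n+1]$ of the appropriate degree which is closed for the total differential $d+[\gamma,-]$, together with the usual coherence data. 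Since the assignment $(\gamma,\lambda)\mapsto(\gamma+\e(\si(\gamma)-\mu(\lambda,\gamma)),\lambda)$ does not alter the $\Cois(f,n)$- and $\Lagr(f,n)$-components, it is automatically a section of $\TCois(f,n)\times\Lagr(f,n)\to\Cois(f,n)\times\Lagr(f,n)$ as soon as it lands in $\TCois(f,n)\times\Lagr(f,n)$; so what I would need to prove is that $\si(\gamma)-\mu(\lambda,\gamma)$ defines a point of the (linear) fibre over $\gamma$, and by additivity it suffices to treat the two summands separately.

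First I would dispose of the term $\si(\gamma)$: this is just the image of $\gamma$ under the map $\id+\e\si\colon\Cois(f,n)\to\TCois(f,n)$ already introduced, so strictly speaking there is nothing new here. The mechanism is that $\si$ acts on weight $p$ by multiplication by $p-1$; since the bracket on $\Pol(f,n)[n+1]$ has weight $-1$, the operator $\si$ is a derivation of the bracket and commutes with the internal differential, so $\id+\e\si$ is a morphism of graded dg Lie algebras into $\Pol(f,n)[n+1]\otimes k[\e]$. Using that $\si$ is a derivation and that $[\si(\gamma),\gamma]=[\gamma,\si(\gamma)]$ one obtains $[\gamma,\si(\gamma)]=\tfrac12\si[\gamma,\gamma]$, and combining this with the Maurer--Cartan equation $d\gamma+\tfrac12[\gamma,\gamma]=0$ gives $(d+[\gamma,-])\si(\gamma)=\si\bigl(d\gamma+\tfrac12[\gamma,\gamma]\bigr)=0$, so $\si(\gamma)$ is a weight $\geq 2$ cocycle in $\Pol_\gamma(f,n)[n+1]$, hence a point of the fibre.

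Next I would handle $\mu(\lambda,\gamma)$, using the discussion preceding the lemma: because $\gamma$ is a coisotropic structure, $\mu(-,\gamma)\colon\DR(f)\to\Pol(f,n)$ is a morphism of graded mixed commutative algebras intertwining the de Rham mixed structure $\ddr$ on $\DR(f)$ with the mixed structure $[\gamma,-]$ on $\Pol(f,n)$, and — being a morphism of complexes — also the internal differentials. A Lagrangian structure $\lambda$ on $f$ is in particular an isotropic structure, i.e.\ a point of $\Map_{\dgmix}(k(2)[-1],\DR(f)[n+1])$; postcomposing with $\mu(-,\gamma)[n+1]$ therefore produces the point $\mu(\lambda,\gamma)$ of $\Map_{\dgmix}(k(2)[-1],\Pol_\gamma(f,n)[n+1])$, equivalently $(d+[\gamma,-])\mu(\lambda,\gamma)=\mu\bigl((d+\ddr)\lambda,\gamma\bigr)=0$. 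Subtracting, $\si(\gamma)-\mu(\lambda,\gamma)$ is the desired point of the fibre over $\gamma$, which yields the point $\Phi(\gamma,\lambda)$ of $\TCois(f,n)\times\Lagr(f,n)$.

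Finally I would check that $\Phi$ is a morphism of spaces and not merely a map on objects: this is routine, since $\si$ is a linear operator and $\gamma\mapsto\mu(-,\gamma)$ is functorial by construction (via the universal property of the symmetric algebra out of the derivations $\phi_{\gamma,A},\phi_{\gamma,B}$ of \cite[Section 3.6]{MS1}), so the formula extends over higher simplices. I do not expect genuine difficulty anywhere; the only point deserving care — and the reason the \emph{strict} models $(\Pol(A,n),\Pol(B/A,n-1))$ and $\DR(A),\DR(B)$ were fixed above — is that all the identities invoked ($\gamma$ Maurer--Cartan, $\mu(-,\gamma)$ a morphism of mixed complexes, $\lambda$ closed) must be read as holding on the nose in these models. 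The main obstacle, such as it is, is thus simply this bookkeeping together with correctly extracting from Lemma~\ref{lm:MCfiberk[e]} that the fibre over $\gamma$ is linear, so that the subtraction $\si(\gamma)-\mu(\lambda,\gamma)$ makes sense.
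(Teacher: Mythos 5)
Your proof is correct and follows essentially the same route as the paper: both reduce the claim, via the $\epsilon^2=0$ linearization of the Maurer--Cartan equation (which is exactly the content of Lemma \ref{lm:MCfiberk[e]}), to checking the two summands separately, using that $\id+\epsilon\sigma$ is a morphism of graded dg Lie algebras (equivalently $[\gamma,\sigma(\gamma)]=\tfrac12\sigma[\gamma,\gamma]$) for the first term and that $\mu(-,\gamma)$ intertwines $\ddr$ with $[\gamma,-]$, so that $(\d+[\gamma,-])\mu(\lambda,\gamma)=\mu((\d+\ddr)\lambda,\gamma)=0$, for the second. The only cosmetic difference is that the paper phrases the splitting directly as "$a_1=\gamma+\epsilon\sigma(\gamma)$ and $a_2=\gamma-\epsilon\mu(\lambda,\gamma)$ are each Maurer--Cartan" rather than through the fibre description.
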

\begin{proof}
We have to check that if $\gamma$ is a Maurer--Cartan element and $\lambda$ is a closed element, then $\gamma + \epsilon(\sigma(\gamma) - \mu(\gamma, \lambda))$ is also a Maurer--Cartan element.

For this it is enough to show that $a_1 = \gamma + \epsilon\sigma(\gamma)$ and $a_2 = \gamma - \epsilon \mu(\lambda, \gamma)$ separately satisfy the Maurer--Cartan equations. Indeed, $\id + \epsilon\sigma$ is a morphism of Lie algebras and hence sends Maurer--Cartan elements to Maurer--Cartan elements. For the second expression we can compute
\[[a_2, a_2] = [\gamma - \epsilon \mu(\lambda, \gamma), \gamma - \epsilon \mu(\lambda, \gamma)] = [\gamma, \gamma] - 2\epsilon [\gamma, \mu(\lambda, \gamma)].\]

Since $\mu(-, \gamma)$ is a morphism of weak graded mixed commutative algebras (see \cite[Definition 1.11]{MS1}), we have $[\gamma, \mu(\lambda, \gamma)] = \mu(\ddr \lambda, \gamma)$. Therefore,
\[\d a_2 + \frac{1}{2}[a_2, a_2] = \d\gamma + \frac{1}{2}[\gamma, \gamma] - \epsilon \mu((\d + \ddr)\lambda, \gamma)=0.\]
\end{proof}

We will repeatedly consider the following construction. Suppose $p\colon T\rightarrow S$ is a morphism of simplicial sets equipped with two sections $s\colon S\rightarrow T$ and $0\colon S\rightarrow T$. The section $0$ will be implicit and we encode the rest of the data in the following diagram:
\[
\xymatrix{
T \ar_{p}[r] & S \ar@/_1pc/_{s}[l]
}
\]

The following is introduced in \cite[Definition 1.23]{Pri1}.

\begin{defn}
The \emph{vanishing locus} of the diagram
\[
\xymatrix{
T \ar_{p}[r] & S \ar@/_1pc/_{s}[l]
}
\]
is defined to be the homotopy limit of
\[
\xymatrix{
S \ar@<.5ex>^{s}[r] \ar@<-.5ex>_{0}[r] & T \ar^{p}[r] & S
}
\]
\end{defn}

In other words, the vanishing locus of such a diagram parametrizes points $x\in S$ together with a homotopy $s(x)\sim 0(x)$ in $p^{-1}(x)$. Given this definition, we can define compatibility between coisotropic and Lagrangian structures.

\begin{defn}\label{def:comp}
We define the space $\Comp(f,n)$ of \emph{compatible pairs} to be the vanishing locus of
\[
\xymatrix{
\TCois(f,n) \times \Lagr(f,n) \ar[r] & \Cois(f,n) \times \Lagr(f,n) \ar@/_1.5pc/_{\Phi}[l]
}
\]
\end{defn}

In other words, elements of $\Comp(f,n)$ are given by pairs $(\gamma, \lambda)$ of a coisotropic and a Lagrangian structure on $f$ together with a homotopy from $\mu(\lambda, \gamma)$ to $\sigma(\gamma)$ in $\Pol_\gamma(f, n)$.

\begin{defn}
The space of \emph{non-degenerate compatible pairs} $\Comp^{nd}(f, n)$ is defined to be the homotopy fiber product
$$\xymatrix{
\Comp^{nd}(f,n) \ar[r]\ar[d] & \Comp(f,n) \ar[d] \\
\Cois^{nd}(f,n) \ar[r] & \Cois(f,n).
}$$
\end{defn}

In particular, the space $\Comp^{nd}(f,n)$ comes equipped with two projections to $\Cois^{nd}(f,n)$ and $\Lagr(f,n)$, giving a correspondence
\[
\xymatrix{
& \Comp^{nd}(f, n) \ar[dl] \ar[dr] & \\
\Cois^{nd}(f,n) && \Lagr(f,n).
}
\]

\begin{prop}\label{prop:comp=cois}
The map
$$\Comp^{nd}(f,n) \to \Cois^{nd}(f,n)$$
is an equivalence.
\end{prop}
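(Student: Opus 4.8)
The plan is to prove that $\Comp^{nd}(f,n)\to\Cois^{nd}(f,n)$ is an equivalence by showing that all of its homotopy fibers are contractible. Fix a non-degenerate coisotropic structure $\ga\in\Cois^{nd}(f,n)$. Unwinding Definition \ref{def:comp} of the vanishing locus and applying Lemma \ref{lm:MCfiberk[e]} to identify the fiber of $\TCois(f,n)\to\Cois(f,n)$ over $\ga$ with $M_\ga:=\Map_{\dgmix}(k(2)[-1],\Pol_\ga(f,n)[n+1])$, one finds that a point of $\Comp^{nd}(f,n)$ lying over $\ga$ consists of a Lagrangian structure $\la\in\Lagr(f,n)$ together with a path in $M_\ga$ from $\mu(\la,\ga)$ to $\si(\ga)$. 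In other words, the homotopy fiber of $\Comp^{nd}(f,n)\to\Cois^{nd}(f,n)$ over $\ga$ is the homotopy fiber, over the point $\si(\ga)$, of the map $\Lagr(f,n)\to M_\ga$ obtained by restricting to $\Lagr(f,n)\subseteq\Isot(f,n)$ the map on mapping spaces induced by the morphism of graded mixed complexes $\mu(-,\ga)\colon\bDR(f)\to\Pol_\ga(f,n)$.

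The key step will be to show that when $\ga$ is non-degenerate, $\mu(-,\ga)\colon\bDR(f)\to\Pol_\ga(f,n)$ is an equivalence of graded mixed complexes, hence induces an equivalence $\Isot(f,n)\xrightarrow{\ \sim\ }M_\ga$. Since $\mu(-,\ga)$ is a morphism of weak graded mixed commutative algebras which is the identity in weight $0$, and since $\bDR(A)=\Sym_A(\Omega^1_A[-1])$, $\Pol(A,n)\simeq\Sym_A(\bT_A[-n-1])$ (and likewise $\bDR(B)$, $\Pol(B/A,n-1)\simeq\Sym_B(\bT_{B/A}[-n])$) are free graded commutative algebras on their weight-$1$ modules, while $\bDR(f)$ and $\Pol(f,n)$ are the homotopy fibers of the corresponding maps, it is enough to check that $\mu(-,\ga)_A\colon\bDR(A)\to\Pol(A,n)$ and $\mu(-,\ga)_B\colon\bDR(B)\to\Pol(B/A,n-1)$ are equivalences. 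The first is exactly non-degeneracy of the $n$-shifted Poisson structure on $A$ in the sense of \cite[Definition 1.4.18]{CPTVV}; the second translates, on weight-$1$ generators, into the statement that the dotted map $\bL_B\to\bT_{B/A}[-n+1]$ in diagram \eqref{eq:nondegeneratediagramaffine} is an equivalence, which holds because the coisotropic structure $\ga$ is non-degenerate in the sense of Definition \ref{def:non-degcoisotropics}.

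Granting this, $\hofib_{\si(\ga)}(\Isot(f,n)\to M_\ga)$ is contractible; call its essentially unique point $\la_0=\la_0(\ga)$, characterised by the existence of a homotopy $\mu(\la_0,\ga)\simeq\si(\ga)$. I would then show that $\la_0$ is non-degenerate, so that $\la_0\in\Lagr(f,n)$: extracting the weight-$1$ part of the homotopy $\mu(\la_0,\ga)\simeq\si(\ga)$ and using that $\pi^\sharp_A$ is an equivalence, the compatibility relation forces $\omega^\sharp_{\la_0}$ to be a homotopy inverse of $\pi^\sharp_A$ and, on the relative piece, makes the dotted arrows of diagram \eqref{eq:lagrangiandiagramaffine} inverse to those of diagram \eqref{eq:nondegeneratediagramaffine}. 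As $\Lagr(f,n)$ is a union of connected components of $\Isot(f,n)$ (non-degeneracy being invariant under equivalence), this gives
\[\hofib_{\si(\ga)}\big(\Lagr(f,n)\to M_\ga\big)\ \simeq\ \Lagr(f,n)\times_{\Isot(f,n)}\{\la_0\}\ \simeq\ \{*\}.\]
Therefore every homotopy fiber of $\Comp^{nd}(f,n)\to\Cois^{nd}(f,n)$ is contractible, and the map is an equivalence. I expect the main obstacle to be the key step: precisely matching the non-degeneracy conditions of Definitions \ref{def:non-degpoisson} and \ref{def:non-degcoisotropics} with equivalences at the level of weight-$1$ generators of $\mu(-,\ga)$, in particular handling the algebra $\Pol(B/A,n-1)$ of relative polyvectors and keeping track of the various shifts and signs; the verification that $\la_0$ is non-degenerate is then a linear-algebra computation of the same flavour.
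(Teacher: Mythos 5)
Your proof is correct and follows essentially the same route as the paper's: the whole argument rests on the observation that for non-degenerate $\ga$ the map $\mu(-,\ga)\colon \DR(f)\to \Pol_\ga(f,n)$ is an equivalence of graded mixed complexes, so the space of $\la$ with $\mu(\la,\ga)\sim\si(\ga)$ is contractible. You merely supply details the paper leaves implicit, namely the reduction of the equivalence to weight-one generators via Definitions \ref{def:non-degpoisson} and \ref{def:non-degcoisotropics}, and the check that the resulting point $\la_0$ is actually non-degenerate (hence lies in $\Lagr(f,n)$ rather than just $\Isot(f,n)$).
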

\begin{proof}
If $\ga$ is a non-degenerate coisotropic structure, then $\mu(-,\ga)\colon \DR(f)\rightarrow \Pol_\gamma(f, n)$ is an equivalence. In particular, the space of two-forms $\lambda\in\DR(f)$ such that $\mu(\lambda, \gamma)\sim \sigma(\gamma)$ is contractible.
\end{proof}

It follows that given any morphism $f\colon A \to B$ in $\CAlg_{M}$, we get a map of spaces
$$\Cois^{nd}(f,n) \to \Lagr(f,n).$$
In particular, consider a map $f\colon L \to X$ of derived stacks. We have an induced map $f^*_{\cB}\colon f^*\cB_X \to \cB_L$ of $\bD_{L_{DR}}$-algebras; twisting by $k(\infty)$, we get $f^*_{\cB}(\infty) : f^*\cB_X(\infty) \to \cB_L(\infty)$ of commutative algebras in the category of graded mixed $\bD_{L_{DR}}(\infty)$-modules. From the above discussion, we know that there is a morphism
$$\Cois^{t,nd}(f^*_{\cB},n) \simeq \Cois^{nd}(f^*_{\cB}(\infty),n) \to \Lagr(f^*_{\cB}(\infty),n) \simeq \Lagr(f^*_{\cB},n).$$
Together with Theorem \ref{thm:poisnd=symp}, this produces a map
$$\Cois^{nd}(f,n) \to \Lagr(f,n).$$
The following is the main result of this section, which has been stated as a conjecture in \cite[Conjecture 3.4.5]{CPTVV}.

\begin{thm}\label{thm:coisnd=lagr}
Let $f\colon L \to X$ be a map of derived Artin stacks. Then the above map
$$\Cois^{nd}(f,n) \to \Lagr(f,n)$$
is an equivalence.
\end{thm}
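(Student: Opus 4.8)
The plan is to reduce the statement, via formal localization, to an algebraic statement about a morphism of commutative algebras in the $\infty$-category $\cC_L$, and then to prove that statement by an obstruction-theoretic induction along the weight filtration, following \cite{Pri4}.

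First I would dispose of the geometry. Both $\Cois^{nd}(f,n)$ and $\Lagr(f,n)$ are homotopy pullbacks of a span attached to $f^*_\cB(\infty)\colon f^*\cB_X(\infty)\to\cB_L(\infty)$ over a span of non-degenerate Poisson, resp.\ symplectic, structures on $\cB_X(\infty)$ and on $f^*\cB_X(\infty)$. The equivalences $\Cois^{t,nd}(f^*_\cB,n)\simeq\Cois^{nd}(f^*_\cB(\infty),n)$ and $\Lagr(f^*_\cB,n)\simeq\Lagr(f^*_\cB(\infty),n)$ are already available, and the two base spans are identified by Theorem \ref{thm:poisnd=symp} together with its algebraic counterpart in $\cC_L$ (the natural equivalence between non-degenerate $n$-shifted Poisson and $n$-shifted symplectic structures in a symmetric monoidal $\infty$-category of the type we consider, cf.\ \cite{CPTVV, Pri1}). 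So it suffices to prove that for a morphism $g\colon A\to B$ of commutative algebras in $\cC_L$ with $\bL_A^{int}$ and $\bL_B^{int}$ dualizable --- which holds here because $L$ and $X$ are locally of finite presentation --- the canonical map $\Cois^{nd}(g,n)\to\Lagr(g,n)$ is an equivalence, compatibly with the projections $\Pois^{nd}(A,n)\to\Symp(A,n)$ and with the corresponding maps for $f^*\cB_X(\infty)$; these compatibilities hold by construction, since the entire map is assembled from the morphisms $\mu(-,\gamma)\colon\DR(g)\to\Pol(g,n)$. By Proposition \ref{prop:comp=cois} the map factors as $\Cois^{nd}(g,n)\xleftarrow{\sim}\Comp^{nd}(g,n)\to\Lagr(g,n)$, so it remains to show that $\Comp^{nd}(g,n)\to\Lagr(g,n)$ is an equivalence.

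Next I would put compatible filtrations on the two sides. On the Lagrangian side, let $\Lagr^{\leq p}(g,n)$ be the non-degenerate locus of $\Map_{\dgmix}(k(2)[-1],\DR(g)^{\leq p}[n+1])$, where $\DR(g)^{\leq p}$ denotes the truncation of $\DR(g)$ to weights at most $p$; since $\DR(g)$ is complete for the weight filtration, $\Lagr(g,n)\simeq\lim_p\Lagr^{\leq p}(g,n)$. On the coisotropic side, filter $\Comp^{nd}(g,n)$ by the condition that the leading weight of the polyvector be at most $p$, using that $\Pol(g,n)^{\geq 2}$ is complete in its weight grading; the map $\Comp^{nd}(g,n)\to\Lagr(g,n)$ then becomes a map of towers, and I would show $\Comp^{nd,\leq p}(g,n)\to\Lagr^{\leq p}(g,n)$ is an equivalence by induction on $p$. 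The base case $p=2$ is the linear-algebra heart: a point of $\Lagr^{\leq 2}(g,n)$ amounts to a non-degenerate $\omega_2$ on $A$ together with a null-homotopy of $\bT_B\to\bL_B[n]$, a point of the level-$2$ coisotropic tower amounts to a non-degenerate leading bivector $\pi_2$ together with a null-homotopy of $\bL_{B/A}\to\bT_{B/A}[2-n]$, and the compatibility equation $\mu(\lambda,\gamma)\sim\si(\gamma)$, read in weight $2$, forces $\omega_A^\sharp$ and $\pi_A^\sharp$ to be mutually inverse up to sign and shift; this interchanges the two morphisms of fiber sequences \eqref{eq:nondegeneratediagramaffine} and \eqref{eq:lagrangiandiagramaffine}, and hence the two null-homotopy data. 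For the inductive step I would compare the homotopy fibers of $\Comp^{nd,\leq p+1}\to\Comp^{nd,\leq p}$ and of $\Lagr^{\leq p+1}\to\Lagr^{\leq p}$ over corresponding points: the $\Lagr$-fiber is controlled by $\Map_{\dgmix}(k(2)[-1],-)$ of the weight-$(p+1)$ part of $\DR(g)$, while for a non-degenerate coisotropic structure $\gamma$ the morphism $\mu(-,\gamma)\colon\DR(g)\to\Pol_\gamma(g,n)$ is an equivalence of graded mixed complexes, hence identifies that weight-$(p+1)$ part with the piece governing the $\Comp^{nd}$-fiber. Combined with the inductive hypothesis this yields the equivalence at level $p+1$; taking limits over $p$ gives $\Comp^{nd}(g,n)\simeq\Lagr(g,n)$, and therefore $\Cois^{nd}(f,n)\simeq\Lagr(f,n)$.

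The hard part will be the inductive step. On the de Rham side one has a genuine complete weight filtration that splits cleanly into graded pieces, but the polyvector side is the completion $\Pol(g,n)^{\geq 2}$, whose Lie bracket lowers total weight by one, so the filtration by leading weight is more delicate; one must check that the non-degeneracy of $\gamma$, and hence the equivalence $\mu(-,\gamma)$, propagates up the tower so that the successive fibers are genuinely identified weight by weight, and that these identifications are compatible with the tower maps. Verifying the base-case interchange of diagrams \eqref{eq:nondegeneratediagramaffine} and \eqref{eq:lagrangiandiagramaffine} is also a point where one must be careful with Koszul signs and shifts. By contrast, the reduction to the affine case and the final passage to the limit are essentially formal.
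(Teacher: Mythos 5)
Your proposal follows essentially the same route as the paper: reduction by formal localization to a morphism of graded mixed cdgas, passage through the space of compatible pairs via Proposition \ref{prop:comp=cois}, and induction along the weight filtration with the weight-$2$ base case forcing $\lambda^\sharp$ to be the inverse of $\gamma^\sharp$. The only place where the paper does more than you indicate is the inductive step, where the identification of the successive fibers is obtained not directly from $\mu(-,\gamma)$ being an equivalence of graded mixed complexes but from the explicit computation (Proposition \ref{prop:equivofobstr}) that $x\mapsto\sigma(x)-\nu(\lambda_2,\gamma_2,x)$ acts as $-\mathrm{id}$ on the weight-$(p+1)$ polyvectors, which rests on the weight-$2$ compatibility $\gamma^\sharp\circ\lambda^\sharp\simeq\mathrm{id}$ --- exactly the delicate point you flag at the end.
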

Note that a treatment of this result in the case $n=0$ was given in \cite{Pri4} and our proof is a slight generalization of that.

\subsection{Reduction to graded mixed algebras}

Our first step in the proof of Theorem \ref{thm:coisnd=lagr} is to notice that we can reduce the problem to an algebraic question: in fact, by Theorem \ref{thm:poisnd=symp}, we only need to show that the map
$$\Cois^{t,nd}(f^*_{\cB},n) \to \Lagr(f^*_{\cB},n)$$
is an equivalence.

We claim that both $\Cois^{t,nd}(f^*_{\cB},n)$ and $\Lagr(f^*_{\cB},n)$ can obtained as global sections of prestacks.
%To start with, we can suppose without loss of generality that both $f^*\cB_X$ and $\cB_L$ are cofibrant as $\bD_{L_{DR}}$-algebras\footnote{Here we are using that the $\infty$-category of functors $\Fun((\dAff/L_{\DR})^{op}, \dg_k^{gr,\epsilon})$ comes from a model category, constructed in \cite[Section 2.3.2]{TV}.}.
Consider the internal relative Tate polyvectors
\[ ( \bfPol^{int,t}(f^*\cB_X,n) , \bfPol^{int,t}(\cB_L/f^*\cB_X,n-1) ). \]
This is a graded $\bP_{[n+2,n+1]}$-algebra in the $\infty$-category $\Fun((\dAff/L_{DR})^{op}, \dg_k)$,
%Once again, we can find a strict $\bP_{[n+2,n+1]}$-algebra equivalent to this one, which we denote by
%\[ ( \Pol^{int,t}(f^*\cB_X,n) , \Pol^{int,t}(\cB_L/f^*\cB_X,n-1) ). \]
or equivalently a diagram of graded $\bP_{[n+2,n+1]}$-algebras in cochain complexes. Let $\bfPol^{int,t}(f^*_{\cB},n)$ be the fiber of the underlying map of commutative algebras. Then there is an equivalence
\[	\Cois^t(f^*_{\cB},n) \simeq \Map_{\alg^{gr}_{\Lie}}(k(2)[-1], \Pol^t(f^*_{\cB},n)), \]
where $\Pol^t(f^*_{\cB},n)$ is the realization (i.e. the limit) of the diagram $\bfPol^{int,t}(f^*_{\cB},n)$.

Now define the prestack
$$\begin{array}{cccc}
\underline{\Cois}^t(f^*_{\cB},n)\colon & (\dAff/L_{DR})^{op} & \longrightarrow & \sSet \\
& (\Spec A \to L_{DR}) & \longmapsto & \Map_{\alg_{\Lie}^{gr}}(k(2)[-1], \bfPol^{int,t}(f^*_{\cB},n)(A)).
\end{array}$$
Consider the sub-prestack of $\underline{\Cois}^t(f^*_{\cB},n)$  of non-degenerate coisotropic structures, and denote it by $\underline{\Cois}^{t,nd}(f^*_{\cB},n)$.
By definition, by taking global sections (that is to say, taking the realization) of the prestack $\underline{\Cois}^{t,nd}(f^*_{\cB})$ we get the space $\Cois^{t, nd}(f^*_{\cB},n)$.

Similarly, we can define the prestack
$$\begin{array}{cccc}
\underline{\Isot}\colon & (\dAff/L_{DR})^{op} & \to & \sSet \\
& (\Spec A \to L_{DR}) & \mapsto & \Map_{\dgmix}(k(2)[-n-2], \bDR^{int}(f^*_{\cB})(A))
\end{array}$$
of $n$-shifted isotropic structures on $f^*_{\cB}$, and define $\underline{\Lagr}(f^*_{\cB},n)$ to be the sub-prestack of $\underline{\Isot}(f^*_{\cB},n)$ of Lagrangian structures. Once again, global sections of $\underline{\Lagr}(f^*_{\cB},n)$ are identified with the space $\Lagr(f^*_{\cB},n)$.

The construction of Section \ref{sect:compatiblepairs} produces a map of prestacks
\begin{equation}\label{eq:cois->lagr}
\underline{\Cois}^{t,nd}(f^*_{\cB},n) \longrightarrow \underline{\Lagr}(f^*_{\cB},n).
\end{equation}
Taking global sections on both sides we get back $\Cois^{t,nd}(f^*_{\cB},n) \to \Lagr(f^*_{\cB},n)$.

It follows that in order to prove Theorem \ref{thm:coisnd=lagr}, it is enough to show that the above map (\ref{eq:cois->lagr}) is an equivalence.
This can be checked object-wise; in particular, we need to show that for every $\Spec A \to L_{DR}$, the induced map
$$\Cois^{t,nd}(f^*_{\cB}(A),n) \to \Lagr(f^*_{\cB}(A),n)$$
is an equivalence.
Notice that in this case, $f^*_{\cB}(A)$ is a map between graded mixed cdgas. In particular, Theorem \ref{thm:coisnd=lagr} will follow from the following result.
\begin{thm}\label{thm:coisnd=lagr2}
Let $M$ be the model category of graded mixed complexes, and let $f\colon A \to B$ be a morphism in $\CAlg_{M}$. Then the map
$$\Cois^{t,nd}(f,n) \to \Lagr(f,n)$$
is an equivalence.
\end{thm}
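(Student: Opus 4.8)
The plan is to follow the strategy of \cite{Pri4} and prove the stronger statement that the map is an equivalence of \emph{weight-filtered} spaces, so that the inductive step becomes a problem in linear algebra fed by the non-degeneracy hypothesis. Throughout I abbreviate $\Cois^{t,nd}$ to $\Cois^{nd}$, since for $f$ a morphism of graded mixed cdgas this is just the non-degenerate part of the space described by $\bfPol^t(f,n)$.

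\textbf{Reduction to compatible pairs.} By Proposition \ref{prop:comp=cois} the projection $\Comp^{nd}(f,n)\to\Cois^{nd}(f,n)$ is an equivalence, and by construction the map of the theorem is the composite of its inverse with the other projection $\Comp^{nd}(f,n)\to\Lagr(f,n)$. Hence it suffices to show the latter is an equivalence. Here one first records a linear-algebra lemma: for a compatible pair $(\ga,\la)$ the homotopy $\mu(\la,\ga)\simeq\si(\ga)$ identifies, in weights $0$ and $1$, the map $\pi_A^\sharp$ with a shift of the inverse of $\omega_A^\sharp$, and the two dotted arrows of \eqref{eq:nondegeneratediagramaffine} with those of \eqref{eq:lagrangiandiagramaffine}; consequently $\ga$ is non-degenerate if and only if $\la$ is. In particular $\Comp^{nd}(f,n)$ really does map to $\Lagr(f,n)\subset\Isot(f,n)$, and the homotopy fiber of $\Comp^{nd}(f,n)\to\Lagr(f,n)$ over a Lagrangian $\la$ is the space of non-degenerate coisotropic structures $\ga$ together with a path $\mu(\la,\ga)\simeq\si(\ga)$ in $\Pol_\ga(f,n)$.

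\textbf{Weight filtrations and induction.} The graded $\bP_{n+2}$-algebra $\Pol^t(f,n)$ and the graded mixed complex $\bDR(f)$ are complete for their weight filtrations (the Tate realization being taken over the internal grading, which commutes with the weight filtration); writing $(-)^{(\le w)}$ for the quotient by weights $>w$, one gets towers with $\Pol^t(f,n)=\lim_w \Pol^t(f,n)^{(\le w)}$ and $\bDR(f)=\lim_w \bDR(f)^{(\le w)}$, hence towers of spaces $\{\Comp^{nd}(f,n)^{(\le w)}\}_w$ and $\{\Lagr(f,n)^{(\le w)}\}_w$ whose limits recover $\Comp^{nd}(f,n)$ and $\Lagr(f,n)$, and the map of the previous paragraph refines to a map of towers. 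It therefore suffices to prove that $\Comp^{nd}(f,n)^{(\le w)}\to\Lagr(f,n)^{(\le w)}$ is an equivalence for every $w$, which we do by induction. The base case is the classical fact that a non-degenerate bivector on $A$ is the same datum as a non-degenerate $2$-form, together with the matching of the weight-$2$ coisotropy and isotropy conditions noted above. For the inductive step, the homotopy fiber of $\Comp^{nd}(f,n)^{(\le w+1)}\to\Comp^{nd}(f,n)^{(\le w)}$ over a fixed structure is equivalent to a mapping space $\Map_{\dgmix}(k(2)[-1], G^{\mathrm{pol}}_{w+1})$, where $G^{\mathrm{pol}}_{w+1}$ is the weight-$(w+1)$ graded piece of $\Pol^t(f,n)$ equipped with the differential twisted by the lower-weight data, and likewise the fiber on the Lagrangian side is $\Map_{\dgmix}(k(2)[-1], G^{\mathrm{dR}}_{w+1})$ for the analogous weight-$(w+1)$ piece of $\bDR(f)$. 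Since $\mu(-,\ga)$ is, weight by weight, the symmetric power of its leading-order derivation, and that derivation is assembled out of $\pi_A^\sharp$ together with the connecting maps of the fiber sequences in \eqref{eq:nondegeneratediagramaffine}, non-degeneracy forces the induced map $G^{\mathrm{dR}}_{w+1}\to G^{\mathrm{pol}}_{w+1}$ to be a quasi-isomorphism. Combined with the inductive hypothesis this yields the equivalence at level $w+1$, and passing to the limit over $w$ proves the theorem.

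\textbf{Main obstacle.} The crux is the inductive step: one must identify precisely the twisted weight-$(w+1)$ control complexes $G^{\mathrm{pol}}_{w+1}$ and $G^{\mathrm{dR}}_{w+1}$ on the coisotropic and Lagrangian sides and check that the leading term of $\mu(-,\ga)$ induces an equivalence between them. This is the ``simple problem in linear algebra'' alluded to in the introduction, but carrying it out requires unwinding the explicit formulas for $\mu$, $\pi_A^\sharp$ and $\omega_A^\sharp$ recalled above, propagating non-degeneracy through the symmetric powers, and keeping careful track of the two gradings in play (the internal grading of graded mixed complexes and the polyvector/de Rham weight) as well as of the Tate realization, which must be taken compatibly with the weight filtration throughout.
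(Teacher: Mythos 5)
Your overall route is the paper's: reduce to compatible pairs via Proposition \ref{prop:comp=cois}, filter everything by weight, and induct, with a base case in weight $2$ and a linear-algebra statement driving the inductive step. The reduction and the base case are fine, modulo one slip: for a compatible pair, non-degeneracy of $\ga$ forces non-degeneracy of $\la$, but not conversely --- the pair $(0,\la)$ is compatible with any closed $\la$, so your ``if and only if'' fails in one direction. This is harmless, since only the correct implication is used.

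The genuine gap is in the inductive step. First, the fiber of $\Comp^{nd,\le w+1}\to\Comp^{nd,\le w}$ is not controlled by the weight-$(w{+}1)$ piece of $\Pol^t(f,n)$ alone: a point of it consists of a lift $\ga_{w+1}$ of the coisotropic structure, a lift $\la_{w+1}$ of the isotropic structure, \emph{and} an extension of the compatibility homotopy, so it is the homotopy fiber of the map
\[
(\ga_{w+1},\la_{w+1})\;\longmapsto\;\si(\ga_{w+1})-\nu(\la_2,\ga_2,\ga_{w+1})-\mu(\la_{w+1},\ga_2)\colon\ \Pol^t(f,n)^{w+1}\oplus\DR(f)^{w+1}\longrightarrow\Pol^t(f,n)^{w+1}.
\]
Second, and more importantly, the operator whose invertibility you invoke --- $\mu(-,\ga)$ in weight $w+1$, i.e.\ $\la_{w+1}\mapsto\mu(\la_{w+1},\ga_2)$ --- is not the one that compares this fiber with the Lagrangian fiber controlled by $\DR(f)^{w+1}$. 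To show that the projection of the displayed homotopy fiber onto the $\DR(f)^{w+1}$ factor is an equivalence, one must invert the restriction of the map to the \emph{other} factor, namely $x\mapsto\si(x)-\nu(\la_2,\ga_2,x)$ on $\Pol^t(f,n)^{w+1}$: this is the linearization of the compatibility equation in the coisotropic direction, and it is the actual content of the inductive step (Proposition \ref{prop:equivofobstr}). The paper inverts it by checking that $\nu(\la_2,\ga_2,-)$ is a derivation in its last argument, hence acts on weight $w+1$ by $(w+1)$ times its weight-$1$ value, and that in weight $1$ it is $\ga^\sharp\circ\la^\sharp\simeq\id$ by the compatibility relation of Lemma \ref{lm:compatibilityformula}; the operator is therefore homotopic to $(w-(w+1))\id=-\id$. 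Your sketch never identifies this operator, and the equivalence $\mu(-,\ga)$ you propose in its place is the fact underlying Proposition \ref{prop:comp=cois}, not the one that closes the induction; without the computation above the comparison of towers does not go through.
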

\begin{remark}
By the correspondence between Tate realizations and twists by $k(\infty)$, the above theorem says that the map
\[ \Cois^{nd}(f(\infty),n) \to \Lagr(f(\infty),n) \]
is an equivalence,
where $f(\infty)\colon A(\infty) \to B(\infty)$ is the twist of $f$ by $k(\infty)$. 
\end{remark}

The proof of Theorem \ref{thm:coisnd=lagr2} occupies the rest of this section.

\subsection{Filtrations}

In order to make notations a bit simpler, we will omit the underlying map $f$ and the shift $n$ in what follows. Moreover, we will also remove the reference to Tate realization. In particular, the space of Tate $n$-coisotropic structures $\Cois^t(f,n)$ will be denoted $\Cois$, and similarly for $\Cois^{nd}, \TCois, \TCois^{nd}, \Lagr$.

Notice that the weight grading of $\Pol^t(f,n)$ automatically defines a filtration on it: we denote by $\Pol^t(f,n)^{\leq p}$ the graded Poisson algebra
\[ \Pol^t(f,n)^{\leq p} = \bigoplus_{i \leq p} \Pol^t(f,n)^i\]
where $\Pol^t(f,n)^i$ is the weight $i$ part of $\Pol^t(f,n)$. Note that $\Pol^t(f, n)^{\leq p}$ is naturally a quotient of $\Pol^t(f, n)$. This in turn induces a filtration on the space $\Cois$, and we set
$$\Cois^{\leq p} : = \Map_{\Lie^{gr}}(k(2)[-1], \Pol^t(f,n)[n+1]^{\leq p}),$$
and similarly $\Cois^{nd, \leq p}$ is the subspace of $\Cois^{\leq p}$ of non-degenerate coisotropic structures. Note that non-degeneracy is merely a condition on the underlying bivector. We define in a similar way the spaces $\TCois^{\leq p}$ and $\TCois^{nd,\leq p}$.

The same construction also applies to the space $\Lagr$. In fact, $\Lagr$ is by definition the subspace of
$$\Map_{\dgmix}(k(2)[-n-2], \DR(f))$$
given by non-degenerate maps. But using the weight grading on $\DR(f)$, one can define graded mixed modules $\DR(f)^{\leq p}$, a quotient of $\DR(f)$, and set $\Lagr^{\leq p}$ to be the subspace of
$$\Map_{\dgmix}(k(2)[-n-2], \DR(f)^{\leq p})$$
given by non-degenerate maps.

Notice that all our previous constructions of $\mu$ and $\si$ in Section \ref{sect:compatiblepairs} are compatible with these filtrations and
$$\TCois^{\leq p} \times \Lagr^{\leq p} \to \Cois^{\leq p} \times \Lagr^{\leq p}$$
admits a natural section $\Phi_p$. We define $\Comp^{\leq p}$ to be the vanishing locus of the section $\Phi_p$.

In particular, for every $p$ we have a commutative square
$$\xymatrix{
\Comp^{\leq p+1} \ar[r] \ar[d] & \Lagr^{\leq p+1} \ar[d] \\
\Comp^{\leq p} \ar[r] & \Lagr^{\leq p}.
}$$
In order to prove that the map $\Comp^{nd} \to \Lagr$ is an equivalence, it is enough to show that $\Comp^{nd,\leq p} \to \Lagr^{\leq p}$ is an equivalence for every $p \geq 2$. We will thus proceed by induction on $p$.

First, let us unpack the compatibility between coisotropic and Lagrangian structures.
\begin{lm}
Suppose $\gamma$ is an $n$-shifted coisotropic structure on $f$ and $\lambda$ an $n$-shifted Lagrangian structure on $f$ inducing morphisms
\[\gamma_A^\sharp\colon \bL_A\rightarrow \bT_A[-n],\qquad \gamma_B^\sharp\colon \bL_B\rightarrow \bT_{B/A}[1-n]\]
and
\[\lambda_A^\sharp\colon \bT_A\rightarrow \bL_A[n],\qquad \lambda_B^\sharp\colon \bT_{B/A}\rightarrow \bL_B[n-1].\]

The compatibility between $\gamma$ and $\lambda$ in weight $2$ is equivalent to the relations
\[\gamma_A^\sharp\circ \lambda_A^\sharp\circ \gamma_A^\sharp\cong \gamma_A^\sharp,\qquad \gamma_B^\sharp\circ \lambda_B^\sharp\circ \gamma_B^\sharp\cong \gamma_B^\sharp.\]
\label{lm:compatibilityformula}
\end{lm}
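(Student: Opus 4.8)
The plan is to unwind the compatibility datum of Definition \ref{def:comp} in weight $2$ and recognize the result. By Definition \ref{def:comp} together with Lemma \ref{lm:MCfiberk[e]}, a point of $\Comp(f,n)$ lying over a pair $(\gamma,\lambda)$ is a homotopy, inside the graded mixed complex $\Pol_{\gamma}(f,n)[n+1]$, between the two maps $\mu(\lambda,\gamma)$ and $\sigma(\gamma)$ out of $k(2)[-1]$. Since $k(2)[-1]$ is concentrated in weight $2$ and the twisted mixed operator $[\gamma,-]$ strictly raises the weight, the weight-$2$ portion of this datum is nothing but a homotopy between the weight-$2$ components of $\mu(\lambda,\gamma)$ and $\sigma(\gamma)$, now regarded as maps of plain complexes (there is no mixed structure internal to a single weight). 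So the whole task is to identify these two weight-$2$ components explicitly on each of the two colors.

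First I would record the identifications of the weight-$2$ pieces, $\Pol(A,n)^{(2)}\cong\Sym^2_A(\bT_A[-n-1])$, $\Pol(B/A,n-1)^{(2)}\cong\Sym^2_B(\bT_{B/A}[-n])$, $\DR(A)^{(2)}\cong\Sym^2_A(\bL_A[-1])$, $\DR(B)^{(2)}\cong\Sym^2_B(\bL_B[-1])$, together with the fiber sequences presenting $\Pol(f,n)$ and $\DR(f)$. Under the evident duality between $\Sym^2$ of a dualizable module and $\Sym^2$ of its dual these identify the weight-$2$ component of $\gamma$ with the pair of maps $\gamma_A^\sharp$, $\gamma_B^\sharp$ appearing in diagram \eqref{eq:nondegeneratediagramaffine} and the weight-$2$ component of $\lambda$ with the pair $\lambda_A^\sharp$, $\lambda_B^\sharp$ of diagram \eqref{eq:lagrangiandiagramaffine}. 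Next I would check the key point: the weight-$1$ value of the algebra morphism $\mu(-,\gamma)\colon\DR(f)\to\Pol(f,n)$ constructed in \cite[Section 3.6]{MS1} equals, color by color, the map $\gamma^\sharp$. Indeed $\mu(-,\gamma)$ sends a weight-$1$ generator $a\,\ddr x$ to $a\,\phi_{\gamma}(x)$, and the restriction of the derivation $\phi_\gamma$ to weight $0$ is precisely contraction by the bivector $\gamma_2$, i.e. $\gamma^\sharp$ (up to the Koszul sign), by the convention recorded after Definition \ref{def:non-degpoisson}. Because $\mu(-,\gamma)$ is a map of graded commutative algebras it is then determined on weight $2$ by this weight-$1$ value via $\Sym^2$, so $\mu(\lambda,\gamma)^{(2)}$ corresponds to $\gamma_A^\sharp\circ\lambda_A^\sharp\circ\gamma_A^\sharp$ on the $A$-color and to $\gamma_B^\sharp\circ\lambda_B^\sharp\circ\gamma_B^\sharp$ on the $B$-color; meanwhile $\sigma$ acts as multiplication by $p-1$ on weight $p$, so $\sigma(\gamma)^{(2)}=\gamma_2$, corresponding to $\gamma_A^\sharp$ and $\gamma_B^\sharp$ respectively. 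Consequently a homotopy between $\mu(\lambda,\gamma)^{(2)}$ and $\sigma(\gamma)^{(2)}$ is exactly a compatible pair of homotopies realizing $\gamma_A^\sharp\circ\lambda_A^\sharp\circ\gamma_A^\sharp\cong\gamma_A^\sharp$ and $\gamma_B^\sharp\circ\lambda_B^\sharp\circ\gamma_B^\sharp\cong\gamma_B^\sharp$, and conversely such a pair reassembles into the weight-$2$ part of a compatibility datum, which is the asserted equivalence.

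The main obstacle I expect is bookkeeping rather than anything conceptual: keeping track of all degree shifts and Koszul signs in the identification of the weight-$1$ value of $\mu(-,\gamma)$ with $\gamma^\sharp$, and, more delicately, handling the $B$-color, where the relevant modules are $\bT_{B/A}$ and $\bL_B$ rather than a single (co)tangent complex, so that one has to pass through the fiber sequences \eqref{eq:nondegeneratediagramaffine} and \eqref{eq:lagrangiandiagramaffine} to verify that the relative contraction operator $\phi_{\gamma,B}$ on weight $0$ is $\gamma_B^\sharp$ (rather than something induced from $\gamma_A^\sharp$) and that the relative component of $\sigma(\gamma)$ recovers $\gamma_B^\sharp$ as well. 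Once the weight-$1$ values of $\mu(-,\gamma)$ are correctly pinned down on both colors, the remaining verification is formal since $\mu(-,\gamma)$ is an algebra map.
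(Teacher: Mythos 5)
Your proposal is correct and follows essentially the same route as the paper: reduce the weight-$2$ compatibility to a homotopy $\mu(\lambda_2,\gamma_2)\cong\sigma(\gamma)_2=\gamma_2$, identify $\mu(\lambda_2,\gamma_2)$ with the conjugation $\gamma^\sharp\circ\lambda^\sharp\circ\gamma^\sharp$ on each color, and treat the $B$-color via the relative identifications $\bL_{B/A}\otimes\bL_B\cong\Hom_B(\bT_{B/A},\bL_B)$ and $\bT_B\otimes\bT_{B/A}\cong\Hom_B(\bL_B,\bT_{B/A})$. The only cosmetic difference is that the paper verifies the identification of $\mu(\lambda_2,\gamma_2)$ by an explicit computation with $\lambda_A=\sum\omega_{ij}\ddr a_i\ddr a_j$, whereas you derive it from $\mu(-,\gamma)$ being an algebra map determined by its weight-$1$ value; these are the same computation.
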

\begin{proof}
Suppose
\[\lambda_A = \sum_{i, j} \omega_{ij} \ddr a_i \ddr a_j\]
is the two-form on $A$ underlying $\lambda$ and $\gamma_A$ is the bivector on $A$ underlying $\gamma$. By definition
\[\mu(\lambda, \gamma)_A = \pm\sum_{i, j} \omega_{ij} [\gamma_A, a_i] [\gamma_A, a_j]\]
and the compatibility relation is that this bivector is homotopic to $\gamma_A$ itself. Since $\bL_A$ is dualizable, we can identify bivectors with antisymmetric maps $\bL_A\rightarrow \bT_A$. Let us now compute the induced maps. Suppose $a'\in A$. Then
\begin{align*}
[\mu(\lambda_A, \gamma_A), a'] &= \pm\sum_{i, j} \omega_{ij} [[\gamma_A, a_i] [\gamma_A, a_j], a'] \\
&= \pm \sum_{i, j} 2\omega_{ij} [\gamma_A, a_i] [[\gamma_A, a_j], a'] \\
&= \pm \sum_{i, j} 2\omega_{ij} [\gamma_A, a_i] [[\gamma_A, a'], a_j].
\end{align*}

But we have
\[(\lambda_A^\sharp\circ \gamma_A^\sharp)(a') = \pm \sum_{i, j} 2\omega_{ij} [[\gamma_A, a'], a_i] \ddr a_j\]
and thus the compatibility relation boils down to a homotopy
\[\gamma_A^\sharp\circ \lambda_A^\sharp\circ \gamma_A^\sharp\cong \gamma_A^\sharp.\]

For the second statement observe that we have a commutative diagram
\[
\xymatrix{
\DR(f)^{\geq 2}[n+2] \ar^{\mu(-, \gamma)}[d] \ar[r] & \bL_{B/A}\otimes \bL_B[n-1] \ar^{\widetilde{\gamma}_B^\sharp\otimes \gamma_B^\sharp}[d] \\
\Pol^t(f, n)^{\geq 2}[n+2] \ar[r] & \bT_B\otimes \bT_{B/A}[1-n]
}
\]
where $\widetilde{\gamma}_B^\sharp\colon \bL_{B/A}\rightarrow \bT_B[1-n]$ is the dual of $\gamma_B^\sharp$. Proceeding as before we see that
\[(\widetilde{\gamma}_B^\sharp\otimes \gamma_B^\sharp)(\lambda_B^\sharp)\cong \gamma_B^\sharp\circ \lambda_B^\sharp\circ \gamma_B^\sharp\]
where we have identified $\bL_{B/A} \otimes \bL_B \cong \Hom_B(\bT_{B/A}, \bL_B)$ and  $\bT_B\otimes \bT_{B/A}\cong \Hom_B(\bL_B, \bT_{B/A})$. Therefore, we obtain that the compatibility of $\gamma_B$ and $\lambda_B$ is equivalent to the relation
\[\gamma_B^\sharp\circ \lambda_B^\sharp\circ \gamma_B^\sharp\cong \gamma_B^\sharp.\]
\end{proof}

The base of the induction is given by the following statement.
\begin{prop}\label{prop:equivweight2}
The projection
$$\Comp^{nd, \leq 2} \to \Lagr^{\leq 2}$$
is an equivalence.
\end{prop}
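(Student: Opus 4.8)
The strategy, following \cite{Pri4}, is to realize the projection as a composite of two equivalences in the span
\[\Cois^{nd,\leq 2}\longleftarrow \Comp^{nd,\leq 2}\longrightarrow \Lagr^{\leq 2}.\]
For the left-hand leg one repeats the proof of Proposition \ref{prop:comp=cois} in weight $\leq 2$: the map $\mu(-,\gamma)$ is weight-preserving (it is an algebra map, the identity on weight $0$, and sends the weight-$1$ generator $a\,\ddr x$ to the weight-$1$ element $a[\gamma,x]$), so for a non-degenerate $\gamma$ the equivalence $\mu(-,\gamma)\colon \DR(f)\to \Pol^t_\gamma(f,n)$ of graded mixed complexes restricts to an equivalence of the weight-$\leq 2$ quotients; hence (using Lemma \ref{lm:MCfiberk[e]} to describe the fibres of $\TCois^{\leq 2}\to\Cois^{\leq 2}$) the space of isotropic $\lambda$ with $\mu(\lambda,\gamma)\simeq \sigma(\gamma)$ is contractible. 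Moreover, since $\sigma$ acts by $p-1$ on weight $p$ and a weight-$\leq 2$ coisotropic structure is concentrated in weight $2$, we have $\sigma(\gamma)=\gamma$; then Lemma \ref{lm:compatibilityformula} shows that such a $\lambda$ satisfies $\lambda_A^\sharp\circ\gamma_A^\sharp\simeq \id$ and $\lambda_B^\sharp\circ\gamma_B^\sharp\simeq \id$, so it is non-degenerate and defines a point of $\Lagr^{\leq 2}$. This proves that $\Comp^{nd,\leq 2}\to\Cois^{nd,\leq 2}$ is an equivalence.

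The second step is to identify $\Cois^{nd,\leq 2}$ with $\Lagr^{\leq 2}$ by a linear-algebra duality. Using the weight-$2$ part of the fibre sequence of Proposition \ref{prop:relpolyvectorsstacks} (equivalently, Theorem \ref{thm:coispolyvectors} together with the description of $\bfPol(f,n)$), a weight-$\leq 2$ coisotropic structure on $f$ is precisely the datum of a bivector $\pi_A^\sharp\colon \bL_A\to \bT_A[-n]$ together with a null-homotopy of the induced composite $\bL_{B/A}\to \bT_{B/A}[2-n]$, i.e. a filling of the morphism of fibre sequences \eqref{eq:nondegeneratediagramaffine}; the non-degenerate ones are those for which the vertical arrows are equivalences. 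Dually, $\Isot^{\leq 2}$ is the space of fillings of \eqref{eq:lagrangiandiagramaffine} and $\Lagr^{\leq 2}$ the sub-locus with invertible vertical arrows. Since \eqref{eq:lagrangiandiagramaffine} is obtained from \eqref{eq:nondegeneratediagramaffine} by applying the contravariant duality $\Hom_B(-,B)$, shifting, and reversing the vertical arrows, sending a non-degenerate filling of \eqref{eq:nondegeneratediagramaffine} to the corresponding filling of \eqref{eq:lagrangiandiagramaffine} defines an equivalence $d\colon \Cois^{nd,\leq 2}\xrightarrow{\ \sim\ }\Lagr^{\leq 2}$, whose homotopy inverse is the reverse procedure. (Note that in the top weight $2$ the twist by the mixed structure $[\gamma,-]$ is invisible, so it plays no role.)

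Finally one checks that the map of the proposition agrees with $d$ composed with the equivalence of the first step: for a point $(\gamma,\lambda)$ of $\Comp^{nd,\leq 2}$, the compatibility homotopy is, by Lemma \ref{lm:compatibilityformula}, a simultaneous pair of homotopies $\gamma_A^\sharp\circ\lambda_A^\sharp\circ\gamma_A^\sharp\simeq\gamma_A^\sharp$ and $\gamma_B^\sharp\circ\lambda_B^\sharp\circ\gamma_B^\sharp\simeq\gamma_B^\sharp$, which — as $\gamma$ is non-degenerate — exhibits $\lambda$ as $d(\gamma)$, coherently. Hence the relevant triangle commutes and the projection $\Comp^{nd,\leq 2}\to\Lagr^{\leq 2}$ is an equivalence.

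I expect the one genuinely delicate point to be the dictionary in the second step: proving carefully that a point of $\Map_{\alg_{\Lie}^{gr}}(k(2)[-1],\Pol^t(f,n)[n+1]^{\leq 2})$ is the same homotopy-coherent datum as a filling of \eqref{eq:nondegeneratediagramaffine} (and the analogue for $\Isot^{\leq 2}$ and \eqref{eq:lagrangiandiagramaffine}), and that $\Hom_B(-,B)$ intertwines these descriptions with the compatibility homotopy. Once that is in place, the remaining content is the formal statement that inverting an equivalence between dualizable $B$-modules is an equivalence of spaces with itself as inverse, which gives the contractibility of the fibres.
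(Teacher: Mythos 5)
Your proof is correct, and it runs on the same engine as the paper's: Lemma \ref{lm:compatibilityformula} plus non-degeneracy of $\gamma$ force $\lambda^\sharp$ and $\gamma^\sharp$ to be mutually inverse, and in weight $2$ the Maurer--Cartan equation and the mixed/twisted differentials are invisible. The organization differs, though. The paper argues directly on points of $\Comp^{nd,\leq 2}$: a compatible non-degenerate pair $(\gamma,\lambda)$ has $\lambda^\sharp$ (hence $\lambda$) uniquely determined as $(\gamma^\sharp)^{-1}$, and symmetrically $\gamma$ determined by $\lambda$, so both legs of the span over $\Cois^{nd,\leq 2}$ and $\Lagr^{\leq 2}$ are equivalences. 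You instead factor the projection as $\Comp^{nd,\leq 2}\to\Cois^{nd,\leq 2}\xrightarrow{d}\Lagr^{\leq 2}$, reproving Proposition \ref{prop:comp=cois} in the truncation for the first leg and constructing an explicit inversion equivalence $d$ for the second. This buys you a cleaner statement (an honest equivalence $\Cois^{nd,\leq 2}\simeq\Lagr^{\leq 2}$ independent of the compatibility formalism) at the cost of two extra verifications: that the composite agrees with the projection (your step 3, which is exactly the paper's computation), and the dictionary identifying weight-$2$ cocycles of $\Pol^t(f,n)$ and of $\DR(f)$ with (anti)symmetric fillings of diagrams \eqref{eq:nondegeneratediagramaffine} and \eqref{eq:lagrangiandiagramaffine}, together with the fact that inverting an equivalence between dualizable modules preserves the $\Sigma_2$-(anti)invariance. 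You correctly flag this last point as the delicate one; note that it is equally present, and equally suppressed, in the paper's own proof (which also passes freely between a bivector or $2$-form and its sharp map), so it is not a gap relative to the paper's level of rigor, but if you want your route to be self-contained you should spell out that in characteristic zero the space of weight-$2$ elements is the $\Sigma_2$-fixed summand of $\Hom(\bL_A,\bT_A[-n])$ and that inversion is $\Sigma_2$-equivariant.
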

\begin{proof}
The points of the space $\Comp^{nd,\leq2}$ are in particular pairs $(\ga,\la)$, where $\ga$ is a weight 2 cocycle in $\Pol^t(f,n)[n+2]$ and $\la$ is a weight 2 cocycle in $\DR(f)[n+2]$. Moreover, $\ga$ is non-degenerate, in the sense that the induced vertical morphisms
$$\xymatrix{
\bL_A \ar[r] \ar[d]^{\ga_A^{\sharp}} & \bL_B \ar[d]^{\ga_B^{\sharp}} \\
|\bT_A^{int}[-n]|^t \ar[r] & |\bT_{B/A}^{int}[-n+1]|^t
}
$$
are equivalences. Similarly, $\la$ defines a Lagrangian structure, so that we also have vertical morphisms
$$\xymatrix{
|\bT_A^{int}[-n]|^t \ar[r] \ar[d]^{\la_A^{\sharp}} & |\bT_{B/A}^{int}[-n+1]|^t \ar[d]^{\la_B^{\sharp}}\\
\bL_A \ar[r]  & \bL_B 
}
$$
which are again equivalences.

By Lemma \ref{lm:compatibilityformula} we have
\[\gamma_A^\sharp\circ \lambda_A^\sharp\circ \gamma_A^\sharp\cong \gamma_A^\sharp,\qquad \gamma_B^\sharp\circ \lambda_B^\sharp\circ \gamma_B^\sharp\cong \gamma_B^\sharp\]
and since both $\gamma_A^\sharp$ and $\gamma_B^\sharp$ are equivalences, we see that $\lambda_A^\sharp$ and $\lambda_B^\sharp$ are uniquely determined to be the inverses of $\gamma_A^\sharp$ and $\gamma_B^\sharp$.
\end{proof}

\subsection{Obstructions}

For the inductive step, suppose we are given a non-degenerate compatible pair $(\ga, \la) \in \Comp^{nd, \leq p}$. We start by studying the obstruction to extend it to something in $\Comp^{nd, \leq p+1}$. For this purpose let us define the obstruction spaces
\begin{align*}
\Obs(p+1, \Lagr) &= \DR(f)^{p+1}[n+3] \\        
\Obs(p+1, \Cois) &= \Pol^t(f,n)^{p+1}[n+3] \\
\Obs(p+1, \TCois) &= \Pol^t(f,n)^{p+1}[n+3] \otimes k[\e],
\end{align*}
where we apply the Dold--Kan correspondence to the complexes on the right to turn a complex into a simplicial set.

We denote by $\uObs(p+1, -)$ the corresponding trivial bundles; for example,
$$\uObs(p+1, \Lagr) = \Obs(p+1, \Lagr)\times \Lagr^{\leq p}.$$

This obstruction bundle has a natural non-trivial section. Suppose $\la \in \DR(f)^{\leq p}$ defines a Lagrangian structure in $\Lagr^{\leq p}$. Then we can also consider $\la$ as living in the whole $\DR(f)$, and take $\ddr \la$.
We define the map
$$ \Lagr^{\leq p}\to \Obs(p+1, \Lagr)$$
by sending $\la$ to $P_{p+1}\ddr \la$, where $P_{p+1}$ is the projection to the weight $(p+1)$-component of $\DR(f)$ and $\ddr$ is the mixed structure on $\DR(f)$. More explicitly, if the weight component of $\la$ are
$\la_2 + \dots + \la_p$, then $\la$ is sent to $\ddr \la_p$. In particular, this gives a section $s\colon \Lagr^{\leq p}\rightarrow \uObs(p+1, \Lagr)$.

The following lemma explains why we think of $\Obs(\Lagr,p+1)$ as obstruction spaces. 
\begin{lm}\label{lm:lagrobs}
There is a natural equivalence between $\Lagr^{\leq(p+1)}$ and the vanishing locus of
\[
\xymatrix{
\uObs(p+1, \Lagr) \ar[r] & \Lagr^{\leq p}. \ar@/_1pc/_{s}[l]
}
\]
\end{lm}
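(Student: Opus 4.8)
The statement to prove is that $\Lagr^{\leq(p+1)}$ is equivalent to the vanishing locus of the diagram $\uObs(p+1,\Lagr)\to\Lagr^{\leq p}$ with section $s$. Recall that by definition $\Lagr^{\leq(p+1)}$ is the subspace of non-degenerate points in $\Map_{\dgmix}(k(2)[-n-2],\DR(f)^{\leq(p+1)})$, and non-degeneracy is a condition only on the weight-$2$ part of the form, so it is preserved under the truncation maps $\DR(f)^{\leq(p+1)}\to\DR(f)^{\leq p}$. Thus it suffices to prove the corresponding unconditioned statement comparing $\Map_{\dgmix}(k(2)[-n-2],\DR(f)^{\leq(p+1)})$ with the vanishing locus of the analogous diagram involving $\Map_{\dgmix}(k(2)[-n-2],\DR(f)^{\leq p})$, and then restrict to the open-closed subspace cut out by non-degeneracy on both sides.

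**Key steps.** First I would recall that mapping spaces out of $k(2)[-1]$ in graded mixed complexes are computed by the explicit model of \cite[Proposition 1.19]{MS1} (or the analogous formula used throughout \cite{MS1}): a point of $\Map_{\dgmix}(k(2)[-n-2],\DR(f))$ is a closed element of weight $\geq 2$ in $\DR(f)[n+2]$ with respect to the total differential $\d+\ddr$, and the space is a union of Maurer--Cartan-type spaces for the relevant (abelian, since $\DR(f)$ is a mixed complex and we forget the algebra structure) differential graded object. Second, I would analyze the short exact sequence of graded mixed complexes
\[
0\longrightarrow \DR(f)^{p+1}\longrightarrow \DR(f)^{\leq(p+1)}\longrightarrow \DR(f)^{\leq p}\longrightarrow 0,
\]
noting that $\DR(f)^{p+1}$, viewed as a subquotient, carries the \emph{zero} mixed structure (the mixed differential raises weight by one, so $\ddr$ vanishes on the top weight component of a $\leq(p+1)$-truncation that already lies in weight $p+1$). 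Hence $\DR(f)^{p+1}$ contributes only its internal differential, and $\Map_{\dgmix}(k(2)[-n-2],\DR(f)^{p+1})\simeq \DR(f)^{p+1}[n+2]$ under Dold--Kan — this is exactly $\Obs(p+1,\Lagr)$ up to the degree shift (the extra shift by one in $\Obs$ accounts for the fact that it is the \emph{obstruction} to lifting, i.e. the cokernel of a connecting map). Third, I would extract from this short exact sequence the associated fiber sequence of mapping spaces
\[
\Map_{\dgmix}(k(2)[-n-2],\DR(f)^{p+1})\longrightarrow \Map_{\dgmix}(k(2)[-n-2],\DR(f)^{\leq(p+1)})\longrightarrow \Map_{\dgmix}(k(2)[-n-2],\DR(f)^{\leq p}),
\]
and identify the connecting map $\Map_{\dgmix}(k(2)[-n-2],\DR(f)^{\leq p})\to \Obs(p+1,\Lagr)$ with the section $s$: concretely, given $\la=\la_2+\dots+\la_p$ closed for $\d+\ddr$ in the $\leq p$-truncation, its obstruction to being closed in the $\leq(p+1)$-truncation is precisely the weight-$(p+1)$ component $\ddr\la_p$ of $\ddr\la$, which is $P_{p+1}\ddr\la$, i.e. $s(\la)$. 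The fiber sequence then exhibits $\Map_{\dgmix}(k(2)[-n-2],\DR(f)^{\leq(p+1)})$ as the homotopy fiber of $s$ over $0$, which is exactly the homotopy limit defining the vanishing locus. Finally I would restrict to non-degenerate loci: since non-degeneracy depends only on $\la_2$ and the maps in the fiber sequence are compatible with the weight-$2$ truncation, the non-degenerate subspace on each side pulls back correctly and one obtains $\Lagr^{\leq(p+1)}$ as the vanishing locus of $\uObs(p+1,\Lagr)\to\Lagr^{\leq p}$.

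**Main obstacle.** The routine parts are the exact-sequence bookkeeping and the Dold--Kan translation; the one genuinely delicate point is pinning down the degree/shift conventions so that the connecting homomorphism of the fiber sequence literally equals the section $s$ defined by $\la\mapsto P_{p+1}\ddr\la$, rather than merely being homotopic to it up to a sign or a shift. Getting this identification strictly right — including checking that the "zero" section $0\colon \Lagr^{\leq p}\to\uObs(p+1,\Lagr)$ corresponds under the fiber sequence to the basepoint, and that $s$ is the map induced by the mixed differential $\ddr$ — is where I expect the care to be needed; it is essentially the same normalization issue that recurs in obstruction-theoretic arguments of this type (cf. the analogous lemma for $n=0$ in \cite{Pri4}).
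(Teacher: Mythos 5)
Your proposal is correct and is essentially the paper's argument in slightly more abstract packaging: the paper verifies the equivalence directly by matching $m$-simplices (closed elements of $\Omega^\bullet(\Delta^m)\otimes\DR(f)^{[2,p+1]}[n+2]$ against pairs $(\la_2+\dots+\la_p,\ \la_{p+1})$ with $\ddr\la_p+\d\la_{p+1}=0$), which is exactly the unpacking of your fiber sequence for the weight filtration together with the identification of the connecting map with $s=P_{p+1}\ddr$. The degree bookkeeping you flag as the delicate point does work out: $\Obs(p+1,\Lagr)=\DR(f)^{p+1}[n+3]$ is the delooping of the fiber $\DR(f)^{p+1}[n+2]$, and a nullhomotopy of $s(\la)$ there is precisely an element $\la_{p+1}$ with $\d\la_{p+1}=-\ddr\la_p$.
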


\begin{proof}
Let $\DR(f)^{[2,p]}$ be the sub-complex of $\DR(f)$ obtained by considering only weights in the interval $[2,p]$. In other words, $\DR(f)^{[2,p]}$ is the complex
$$\DR(f)^{[2,p]} = \bigoplus_{i=2}^{p} \DR(f)^i$$
equipped with the differential $d + \ddr$.

By definition, $m$-simplices of $\Lagr^{\leq p+1}$ are identified with closed elements in the complex
$$\Omega^\bullet(\Delta^m)\otimes \DR(f)^{[2, p+1]}[n+2].$$

On the other hand, the vanishing locus of the section $s$ has $m$-simplices given by closed elements $\la_2 + \dots +\la_p$ in
$$\Omega^\bullet(\Delta^m)\otimes \DR(f)^{[2, p]}[n+2].$$
together with an element $\la_{p+1}$ in $\Omega^\bullet(\Delta^m)\otimes \DR(f)(p+1)[n+2]$ such that $\ddr \la_p + \d \la_{p+1} = 0$. Therefore, they are identified with elements $\la_2 + \dots + \la_{p+1}$ of $\Lagr^{p+1}$. 
\end{proof}

We can also obtain a similar interpretation of the obstruction spaces for $\Cois$ and $\TCois$. More specifically, define a section
$$\Cois^{\leq p} \to \Obs(\Cois,p+1)$$
of the obstruction bundle for $\Cois$ by sending an element $\ga \in \Cois^{\leq p}$ to $\frac 12 P_{p+1} [\ga,\ga]$, where once again $P_{p+1}$ is the projection on weight $(p+1)$, and the bracket is the one on $\Pol^t(f,n)$. The same formula defines a section
$$\TCois^{\leq p} \to \Obs(\TCois,p+1)$$
of the obstruction bundle $\uObs(\TCois,p+1)$.

The proof of the following lemma is completely analogous to the proof of Lemma \ref{lm:lagrobs}.

\begin{lm}
There are natural equivalences between the spaces $\Cois^{\leq p+1}$, $\TCois^{\leq p+1}$ and the vanishing loci of the sections
$$\Cois^{\leq p} \to \uObs(\Cois,p+1)$$
and
$$\TCois^{\leq p} \to \uObs(\TCois,p+1)$$
defined above.
\end{lm}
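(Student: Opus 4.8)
The plan is to run the proof of Lemma \ref{lm:lagrobs} essentially verbatim, with the graded mixed complex $\DR(f)$ replaced by the graded dg Lie algebra $\Pol^t(f,n)[n+1]$ (and, for the $\TCois$ statement, by $\Pol^t(f,n)[n+1]\otimes k[\e]$), and with the linear mixed differential $\ddr$ replaced by the quadratic operation $\gamma\mapsto\tfrac12[\gamma,\gamma]$. Since the $\TCois$ reduction is obtained from the $\Cois$ one by adjoining $k[\e]$ throughout, I will only describe the case of $\Cois$.

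First I would make the simplices explicit. By the realization of the relevant mapping space (as in \cite[Proposition 1.19]{MS1}), an $m$-simplex of $\Cois^{\leq q}$ is a Maurer--Cartan element $\gamma=\sum_{i=2}^{q}\gamma_i$ of $\Omega^\bullet(\Delta^m)\otimes\Pol^t(f,n)[n+1]$ supported in weights $2\leq i\leq q$; equivalently, its weight components satisfy $\d\gamma_i+\tfrac12\sum_{j+\ell=i+1}[\gamma_j,\gamma_\ell]=0$ for $2\leq i\leq q$. Because each bracket that occurs has both arguments of weight $\geq 2$, the weight-$(p+1)$ equation involves only $\gamma_2,\dots,\gamma_p$ together with the single new unknown $\d\gamma_{p+1}$; this identifies $s(\gamma)=\tfrac12 P_{p+1}[\gamma,\gamma]$, viewed as a point of the fibre $\Obs(p+1,\Cois)=\Pol^t(f,n)^{p+1}[n+3]$, with the obstruction to extending a weight-$\leq p$ solution to weight $p+1$. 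The usual computation (apply $\d$ and use the lower-weight equations together with the graded Jacobi identity $[[\gamma,\gamma],\gamma]=0$) shows that $s(\gamma)$ is a cocycle, so the section $\Cois^{\leq p}\to\uObs(p+1,\Cois)$ is well defined.

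Next I would identify the vanishing locus. An $m$-simplex of the vanishing locus of $s$ is an $m$-simplex $\gamma=\sum_{i=2}^{p}\gamma_i$ of $\Cois^{\leq p}$ together with a path in the fibre $\Obs(p+1,\Cois)$ from $s(\gamma)$ to $0$; unwinding the Dold--Kan model of that fibre, such a path is precisely an element $\gamma_{p+1}\in\Omega^\bullet(\Delta^m)\otimes\Pol^t(f,n)^{p+1}[n+2]$ with $\d\gamma_{p+1}+\tfrac12 P_{p+1}[\gamma,\gamma]=0$. Collating this with the weight-$\leq p$ equations is exactly the data of an $m$-simplex of $\Cois^{\leq p+1}$, and the resulting bijection is manifestly natural in $[m]$, hence assembles into an equivalence of simplicial sets. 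The identical argument carried out over $k[\e]$ yields the $\TCois$ statement.

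The only point not already contained in Lemma \ref{lm:lagrobs} is that the defining operation is genuinely quadratic in $\gamma$ rather than linear, so the extra bracket term has to be accounted for; concretely, the single thing requiring verification beyond that lemma is the cocycle property of the section $s$, which I expect to be a routine consequence of the Jacobi identity. Matching the shifts (the $[n+3]$ on the obstruction spaces against the $[n+1]$ on $\Pol^t$) and the signs in the nullhomotopy convention is then bookkeeping identical to that of Lemma \ref{lm:lagrobs}.
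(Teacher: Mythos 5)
Your argument is correct and is exactly the argument the paper intends: the paper omits the proof, stating only that it is ``completely analogous to the proof of Lemma \ref{lm:lagrobs}'', and your write-up carries out that analogy correctly, including the one genuinely new point (the cocycle property of $\gamma\mapsto\tfrac12 P_{p+1}[\gamma,\gamma]$, which follows from the lower-weight Maurer--Cartan equations and the Jacobi identity, noting that only weight-$\leq p$ components of $[\gamma,\gamma]$ contribute). The weight bookkeeping with the weight~$-1$ bracket and the identification of the nullhomotopy with the component $\gamma_{p+1}$ are both right, and the $\TCois$ case does indeed follow by tensoring with $k[\e]$ throughout.
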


Putting all of them together, we obtain a diagram
$$
\xymatrix{
\uObs(p+1, \TCois\times \Lagr) \ar[d] \ar[r] & \uObs(p+1, \Cois\times \Lagr) \ar[d] \\
\TCois(A, n)^{\leq p}\times \Lagr(A, n)^{\leq p} \ar@/_1pc/[u] \ar[r] & \Cois(A, n)^{\leq p}\times \Lagr(A, n)^{\leq p}.  \ar@/_1pc/[u] \ar@/_1.5pc/[l]_{\Phi_p} \\
}
$$

Moreover, the lemmas above show that taking the vanishing loci vertically we obtain the obvious projection
\[\TCois^{\leq p+1}\times \Lagr^{\leq p+1}\longrightarrow \Cois^{\leq p+1}\times \Lagr^{\leq p+1}.\]
On the other hand, the vanishing locus of the bottom section is by definition $\Comp^{\leq p}$. Our next goal is to show that the top map in the diagram also admits a canonical section.

Suppose we are given two elements $\ga$ and $\delta$ of $\Pol^t(f,n)$. By the results of \cite[Section 3.6]{MS1}, $\ga$ can be used to induce a commutative diagram of algebras
\[ \xymatrix{
\DR(A) \ar[r] \ar[d]^{\mu(-,\ga)_{A}} & \DR(B) \ar[d]^{\mu(-,\ga)_B} \\
\Pol^t(A,n) \ar[r] & \Pol^t(B/A,n-1).
} \]
Similarly, $\delta$ defines derivations $\psi_{\delta,A}$ and $\psi_{\delta,B}$ on $\Pol^t(A,n)$ and $\Pol^t(B/A,n-1)$, compatible with the $\bP_{[n+2,n+1]}$-structure. In particular, their restriction to weight zero define a commutative diagram
\[ \xymatrix{
\Omega^1_A \ar[r] \ar[d] & \Omega^1_B \ar[d] \\
\Pol^t(A,n) \ar[r] & \Pol^t(B/A,n-1).
} \]
Putting all together, we can now construct two derivations which fit vertically in the diagram
$$\xymatrix{
\DR(A) \ar[r] \ar[d]^{\nu(-,\ga,\delta)_A} & \DR(B) \ar[d]^{\nu(-,\ga,\delta)_{B}} \\
\Pol^t(A,n) \ar[r] & \Pol^t(B/A,n-1).
}$$
More explicitly, $\nu(-, \gamma, \delta)_A$ is an $A$-linear derivation $\DR(A)\rightarrow \Pol^t(A, n)$ relative to $\mu(-, \gamma)_A$ and similarly for $\nu(-, \gamma, \delta)_B$. On weight 1 generators they are given by
$$\nu(a \ddr x, \ga, \delta)_A = \mu(a,\ga)_A\psi_{\delta,A}(x) = a\psi_{\delta,A}(x) $$
and
$$\nu(b \ddr y, \ga, \delta)_B = \mu(b,\ga)_B\psi_{\delta,B}(y)=b\psi_{\delta,B}(y).$$

Passing to homotopy fibers, we obtain a map
$$\nu(-, \ga, \delta)\colon \DR(f) \to \Pol^t(f,n).$$ 
Define the morphism
$$\uObs(p+1, \Cois\times \Lagr)\to \uObs(p+1, \TCois\times \Lagr)$$
to be given by
$$(\ga, \la, \delta_\ga, \delta_\la)\mapsto (\ga + \e \si(\ga) - \e\mu(\la, \ga), \la, \delta_\ga + \e\si(\delta_\ga) - \e \nu(\lambda, \ga, \delta_\ga) - \e\mu(\delta_\la, \ga), \delta_\la),$$
where we denoted by $\delta_\ga$ and $\delta_\la$ the elements of the obstruction spaces.
Notice that the above formula clearly gives a section of the projection
$$ \uObs(p+1, \TCois\times\Lagr) \rightarrow \uObs(p+1, \Cois \times \Lagr).$$
In particular, we obtain a diagram
\

$$
\xymatrix{
\uObs(p+1, \TCois\times \Lagr) \ar[d] \ar[r] & \uObs(p+1, \Cois\times \Lagr) \ar@/_1.3pc/[l] \ar[d] \\
\TCois(A, n)^{\leq p}\times \Lagr(A, n)^{\leq p} \ar@/_1pc/[u] \ar[r] & \Cois(A, n)^{\leq p}\times \Lagr(A, n)^{\leq p} \ar@/_1.3pc/[l]_{\Phi_p} \ar@/_1pc/[u]
}
$$

We are now going to show that the top section is compatible with other sections. First, we need a preliminary Lemma.

\begin{lm}
Let $\ga \in \Pol^t(f,n)^{\leq p}$ and $\la \in \DR(f)^{\leq p}$, and suppose $\ga$ is of degree $n+2$. Then we have
$$\frac 12\nu(\la,\ga, [\ga,\ga]) + \mu(\ddr \la, \ga) = [\ga, \mu(\la, \ga)].$$
\label{lm:munu}
\end{lm}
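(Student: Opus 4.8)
The plan is to reduce the claimed identity — which a priori concerns maps out of all of $\DR(f)$ — to a check on the commutative-algebra generators of $\DR(f)$ in weights $0$ and $1$. The first step is to observe that each of the three expressions
\[
\la\longmapsto \tfrac12\,\nu(\la,\ga,[\ga,\ga]),\qquad
\la\longmapsto \mu(\ddr\la,\ga),\qquad
\la\longmapsto [\ga,\mu(\la,\ga)]
\]
is a derivation $\DR(f)\to\Pol^t(f,n)$ relative to the algebra map $\mu(-,\ga)$, and that all three raise cohomological degree by one. For the first this is exactly how $\nu(-,\ga,\delta)$ was constructed; the second is the composite of the derivation $\ddr$ of $\DR(f)$ with the algebra morphism $\mu(-,\ga)$; the third is the composite of $\mu(-,\ga)$ with the derivation $[\ga,-]$ of $\Pol^t(f,n)$. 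This is the one place where the hypothesis $\deg\ga=n+2$ is used: it makes $[\ga,-]$ a degree $+1$ operator (matching $\ddr$) and makes $[\ga,\ga]$ have the degree for which the associated derivation $\psi_{[\ga,\ga],\bullet}$ raises degree by $2$, so that all three expressions above have degree $+1$ and can be compared.

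Next I would use that $\DR(A)=\Sym_A(\Omega^1_A[-1])$ is generated as a commutative algebra by $A$ (in weight $0$) and by the elements $\ddr a$, $a\in A$ (in weight $1$): hence any two $\mu(-,\ga)$-derivations agreeing on $A$ and on $\{\ddr a\}$ agree on all of $\DR(A)$, and likewise with $B$ in place of $A$. Since $\mu$, $\nu$, $\ddr$ and $[\ga,-]$ are all compatible with the algebra maps $\DR(A)\to\DR(B)$ and $\Pol^t(A,n)\to\Pol^t(B/A,n-1)$, it suffices to verify the identity separately for the absolute and the relative pieces and then pass to homotopy fibers. So I would evaluate both sides on generators using the explicit formulas of \cite[Section 3.6]{MS1}: $\mu(-,\ga)$ is the identity in weight $0$ and $\mu(\ddr a,\ga)=\phi_{\ga,A}(a)=[\ga,a]$; $\nu(-,\ga,\delta)$ is $A$-linear, hence vanishes in weight $0$, and $\nu(\ddr a,\ga,\delta)=\psi_{\delta,A}(a)=[\delta,a]$; and analogously on the $B$-side. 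On $a\in A$ the identity reads $0+[\ga,a]=[\ga,a]$, which is immediate. On $\ddr a$ it reads $\tfrac12[[\ga,\ga],a]+\mu(\ddr\ddr a,\ga)=[\ga,[\ga,a]]$; since $\ddr\circ\ddr=0$ the middle term vanishes, and the remaining equality $[\ga,[\ga,a]]=\tfrac12[[\ga,\ga],a]$ is the graded Jacobi identity applied to $\ga$, which for the shifted bracket behaves as an odd element because $\deg\ga=n+2$. The $B$-side computation is word for word the same with $\phi_{\ga,B},\psi_{\delta,B}$ in place of $\phi_{\ga,A},\psi_{\delta,A}$.

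The only genuinely delicate point is sign and coefficient bookkeeping: one has to track the signs in the \cite[Section 3.6]{MS1} formulas for $\phi_{\ga,\bullet}$, $\psi_{\delta,\bullet}$ and for the values of $\mu(-,\ga)$ and $\nu(-,\ga,\delta)$ on weight-$1$ generators, and confirm that the coefficient $\tfrac12$ in the statement is precisely the one reconciling $\nu(\ddr a,\ga,[\ga,\ga])=[[\ga,\ga],a]$ with the Jacobi relation $[\ga,[\ga,a]]=\tfrac12[[\ga,\ga],a]$ — equivalently, that $\nu(-,\ga,\delta)$ is the first-order variation of $\mu(-,\ga)$ in $\ga$ in the direction $\delta$. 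Once these conventions are pinned down, the evaluation on generators above is forced and the lemma follows. It may also be worth remarking that the statement refines the identity $[\ga,\mu(\la,\ga)]=\mu(\ddr\la,\ga)$ used in the proof that $\Phi$ is well defined, keeping track of the extra term governed by $[\ga,\ga]$ that is needed here because $\ga$ is only assumed to be a cochain, not a Maurer--Cartan element.
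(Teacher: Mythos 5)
Your proof is correct and follows essentially the same route as the paper's: all three terms are derivations $\DR\to\Pol^t$ relative to the algebra map $\mu(-,\ga)$, hence determined by their values in weight $0$ and on weight-$1$ generators $\ddr a$, where the identity reduces to $\mu(\ddr\la,\ga)=[\ga,\la]$ in weight $0$ and to the Jacobi identity $\tfrac12[[\ga,\ga],a]=[\ga,[\ga,a]]$ in weight $1$. The extra remarks on degrees and on the relation to the Maurer--Cartan computation are consistent with the paper but not needed.
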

\begin{proof}
For fixed $\ga$, the three terms are induced by pairs of maps $\DR(A) \to \Pol^t(A,n)$ and $\DR(B) \to \Pol^t(B/A,n-1)$, which are all derivations determined by their values in weight 0 and 1. It follows that it is enough to prove the lemma for $\la$ in weight 0 or 1.
\begin{itemize}
\item Suppose $\la \in \DR(f)$ is of weight 0.
By definition $\nu(-,\ga,[\ga,\ga])$ is induced by the pair of derivations $\psi_{[\ga,\ga],A}$ and $\psi_{[\ga,\ga],B}$, which are respectively $A$-linear and $B$-linear. In particular, they are both zero in weight 0, so that also $\nu(\la,\ga,[\ga,\ga])$ is zero if $\la$ is of weight 0.

We are thus left with proving that if $\la$ has weight 0, then
$$\mu(\ddr \lambda, \gamma) = [\gamma, \mu(\lambda, \gamma)] = [\gamma, \lambda]$$
which follows from the definition of $\mu$.

\item  Suppose now $\la \in \DR(f)$ is a generator of weight 1 of the form $\la=\ddr g$. 
Then $\mu(\ddr \la, \ga) = 0$, and we need to prove that
$$\frac{1}{2}\nu(\ddr g,\ga, [\ga,\ga]) = [\ga, \mu(\ddr g, \ga)].$$

Using the definition of $\nu$ and $\mu$ this reduces to
\[\frac{1}{2}[[\gamma, \gamma], g] = [\gamma, [\gamma, g]]\]
which follows from the Jacobi identity.
\end{itemize}
\end{proof}

\begin{prop}
The section $\uObs(p+1, \Cois\times \Lagr)\rightarrow \uObs(p+1, \TCois\times \Lagr)$ constructed above commutes with the obstruction maps. Moreover, the induced map on vanishing loci of the vertical sections is equivalent to
$$\Phi_{p+1}\colon \Cois(f, n)^{\leq (p+1)}\times \Lagr(f, n)^{\leq (p+1)}\longrightarrow \TCois(f, n)^{\leq (p+1)} \times \Lagr(f, n)^{\leq (p+1)}.$$
\end{prop}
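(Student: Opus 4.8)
The plan is a direct verification from the explicit formulas defining the four sections of the square, together with Lemma~\ref{lm:munu} and bookkeeping of the weight grading. Throughout let $\gamma$ be a Maurer--Cartan element of $\Pol^t(f,n)[n+1]^{\leq p}$ (so $\d\gamma+\tfrac12[\gamma,\gamma]$ vanishes in weights $\leq p$), let $\lambda$ be a closed element of $\DR(f)^{[2,p]}[n+2]$, and write $\tilde\gamma=\gamma+\epsilon(\sigma(\gamma)-\mu(\lambda,\gamma))$ for the image of $(\gamma,\lambda)$ under $\Phi_p$. By the preceding lemmas the vanishing loci of the two vertical obstruction sections are $\Cois^{\leq p+1}\times\Lagr^{\leq p+1}$ and $\TCois^{\leq p+1}\times\Lagr^{\leq p+1}$; once the square commutes (it is automatically compatible with the implicit zero sections, since $\mu$ and $\nu$ vanish in the slots set to zero), it induces a map between these, and the remaining content is the identification of this map with $\Phi_{p+1}$.

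First I would show that the square of sections commutes. Applied to $(\gamma,\lambda)$, the composite going up and then right is
\[
\Bigl(\tilde\gamma,\ \lambda,\ \tfrac12 P_{p+1}[\gamma,\gamma]+\epsilon\sigma\bigl(\tfrac12 P_{p+1}[\gamma,\gamma]\bigr)-\epsilon\,\nu\bigl(\lambda,\gamma,\tfrac12 P_{p+1}[\gamma,\gamma]\bigr)-\epsilon\,\mu(P_{p+1}\ddr\lambda,\gamma),\ P_{p+1}\ddr\lambda\Bigr),
\]
while going right and then up is $\bigl(\tilde\gamma,\ \lambda,\ \tfrac12 P_{p+1}[\tilde\gamma,\tilde\gamma],\ P_{p+1}\ddr\lambda\bigr)$, so only the third entries need comparing. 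Expanding $[\tilde\gamma,\tilde\gamma]=[\gamma,\gamma]+2\epsilon[\gamma,\sigma(\gamma)-\mu(\lambda,\gamma)]$, the term $\epsilon P_{p+1}[\gamma,\sigma(\gamma)]$ equals $\epsilon\sigma\bigl(\tfrac12 P_{p+1}[\gamma,\gamma]\bigr)$ since $\sigma$ is a derivation of the graded Lie bracket (the weight $-1$ of the bracket absorbs the two shifts) commuting with $P_{p+1}$. What remains is the identity $P_{p+1}[\gamma,\mu(\lambda,\gamma)]=\nu(\lambda,\gamma,\tfrac12 P_{p+1}[\gamma,\gamma])+\mu(P_{p+1}\ddr\lambda,\gamma)$, which is the weight-$(p+1)$ part of Lemma~\ref{lm:munu}: one applies $P_{p+1}$ to $[\gamma,\mu(\lambda,\gamma)]=\tfrac12\nu(\lambda,\gamma,[\gamma,\gamma])+\mu(\ddr\lambda,\gamma)$ and uses the partial Maurer--Cartan relation $\tfrac12[\gamma,\gamma]\equiv-\d\gamma+\tfrac12 P_{p+1}[\gamma,\gamma]$ modulo weights $\geq p+2$, so that the lower-weight part of $\tfrac12\nu(\lambda,\gamma,[\gamma,\gamma])$ recombines---using the chain-map property of $\mu(-,\gamma)$ and $\nu(-,\gamma,-)$ and the closedness of $\lambda$---with the weight-$(p+1)$ contributions of $\mu(\ddr\lambda,\gamma)$ coming from weights below $p+1$ of $\ddr\lambda$, and cancels.

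Then I would identify the induced map on vanishing loci. A point of the left vanishing locus is $(\gamma,\lambda)$ together with nullhomotopies $\gamma_{p+1},\lambda_{p+1}$ of the two obstructions, assembling to $(\gamma+\gamma_{p+1},\lambda+\lambda_{p+1})\in\Cois^{\leq p+1}\times\Lagr^{\leq p+1}$; the top section carries this to $(\tilde\gamma,\lambda)$ with nullhomotopy $\gamma_{p+1}+\epsilon\sigma(\gamma_{p+1})-\epsilon\,\nu(\lambda,\gamma,\gamma_{p+1})-\epsilon\,\mu(\lambda_{p+1},\gamma)$ of the image obstruction, assembling to $\bigl(\tilde\gamma+\gamma_{p+1}+\epsilon\sigma(\gamma_{p+1})-\epsilon\,\nu(\lambda,\gamma,\gamma_{p+1})-\epsilon\,\mu(\lambda_{p+1},\gamma),\ \lambda+\lambda_{p+1}\bigr)$ in $\TCois^{\leq p+1}\times\Lagr^{\leq p+1}$. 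One then checks that this equals $\Phi_{p+1}(\gamma+\gamma_{p+1},\lambda+\lambda_{p+1})$: $\sigma$ is additive, and modulo weights $>p+1$ one has $\mu(\lambda+\lambda_{p+1},\gamma+\gamma_{p+1})\equiv\mu(\lambda+\lambda_{p+1},\gamma)+\nu(\lambda,\gamma,\gamma_{p+1})$, because $\nu(-,\gamma,\delta)$ is the linearization of $\mu(-,-)$ in its second variable and every higher dependence on $\gamma_{p+1}$, as well as the term $\nu(\lambda_{p+1},\gamma,\gamma_{p+1})$, has weight $>p+1$.

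The step I expect to be the main obstacle is the weight bookkeeping just used: the operators $\mu(-,\gamma)$ and $\nu(-,\gamma,-)$ are not homogeneous for the weight grading---they involve all weight components of $\gamma$---whereas the obstruction sections isolate weight $p+1$, so reconciling the weight-$(p+1)$ part of Lemma~\ref{lm:munu} with $\nu(\lambda,\gamma,\tfrac12 P_{p+1}[\gamma,\gamma])+\mu(P_{p+1}\ddr\lambda,\gamma)$ genuinely requires the partial Maurer--Cartan condition on $\gamma$ and the closedness of $\lambda$; the precise shape of the top section is what makes these contributions cancel.
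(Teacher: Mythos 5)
Your proposal is correct and follows essentially the same route as the paper: the commutativity of the square of sections is reduced to the identity $[\ga,\si(\ga)]=\tfrac12\si([\ga,\ga])$ together with Lemma \ref{lm:munu}, and the identification of the induced map on vanishing loci is the same weight expansion $\mu(\la+\la_{p+1},\ga+\ga_{p+1})\equiv\mu(\la,\ga)+\mu(\la_{p+1},\ga)+\nu(\la,\ga,\ga_{p+1})$ modulo weights above $p+1$. The only difference is that you make explicit the weight bookkeeping needed to extract the weight-$(p+1)$ part of Lemma \ref{lm:munu} (using the partial Maurer--Cartan equation for $\ga$ and closedness of $\la$), a point the paper's proof leaves implicit.
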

\begin{proof}
We begin by analyzing the commutativity of
\[
\xymatrix{
\Cois^{\leq p}\times \Lagr^{\leq p} \ar[r] \ar^{\Phi_p}[d] & \uObs(p+1, \Cois \times \Lagr) \ar[d] \\
\TCois^{\leq p}\times \Lagr^{\leq p} \ar[r] & \uObs(p+1, \TCois\times \Lagr).
}
\]

Consider an element $(\ga, \la)\in \Cois^{\leq p}\times \Lagr^{\leq p}$. Its image in $\uObs(p+1, \Cois \times \Lagr)$ is
\[\left(\ga, \la, \frac 12 P_{p+1} [\ga, \ga], \ddr \lambda_p\right).\]
If we send it to $\uObs(p+1, \TCois \times \Lagr)$ via the top section, we get 
\begin{align*}
(&\ga + \e \si(\ga) - \e\mu(\la, \ga), \la, \\
&\frac{1}{2} P_{p+1}[\ga, \ga] + \e \frac p2 P_{p+1}[\ga, \ga] - \e \nu(\la, \ga, \frac 12 P_{p+1}[\ga, \ga]) - \e \mu(\ddr \la_p, \ga), \ddr \la_p).
\end{align*}

Alternatively, we can apply $\Phi_p$ to the pair $(\ga, \la)$, getting
$$\Phi_p(\ga, \la) = (\ga + \e \si(\ga) - \e \mu(\la, \ga), \la).$$
The image of $\Phi_p(\ga, \la)$ in $\uObs(p+1, \TCois \times \Lagr)$ is
$$(\ga + \e \si(\ga) - \e \mu(\la, \ga), \la, \frac 12 P_{p+1}[\ga, \ga] + \e P_{p+1}[\ga, \si(\ga) - \mu(\la, \ga)], \ddr \la_p).$$

This means that we are left with proving that
$$\frac p2 P_{p+1}[\ga, \ga] - \nu(\la, \ga, \frac 12 P_{p+1}[\ga, \ga]) - \mu(\ddr \la_p, \ga) = P_{p+1}[\ga, \si(\ga) - \mu(\la, \ga)]$$
in $\Pol^t(f, n)^{p+1}$. A straightforward computation shows that $[\ga, \si(\ga)] = \frac{1}{2}\si( [\ga, \ga])$, so that $\frac p2 [\ga, \ga] =P_{p+1}[\ga, \si(\ga)]$ in $\Pol^t(f, n)^{p+1}$. The rest of the terms are dealt with using Lemma \ref{lm:munu}.

This proves the first part of the proposition. For the second statement, let $(\ga', \lambda')$ be an element in $\Cois^{\leq p+1}\times \Lagr^{\leq p+1}$, and write
$$\ga' = \ga + \ga_{p+1} \ \ \text{and} \ \ \la'= \la +\la_{p+1},$$
where $\ga$ and $\la$ have no component of weight $p+1$, while $\ga_{p+1}$ and $\la_{p+1}$ are concentrated in weight $p+1$.
Then the image of $(\ga',\la')$ under $\Phi_{p+1}$ in $\TCois^{\leq(p+1)}\times \Lagr^{\leq(p+1)}$ is
$$(\ga+\ga_{p+1} + \epsilon\si(\ga + \ga_{p+1}) - \epsilon\mu(\la + \la_{p+1}, \ga + \ga_{p+1}), \la+\la_{p+1}).$$

By weight reasons
$$\mu(\la + \la_{p+1}, \ga + \ga_{p+1}) = \mu(\la, \ga) + \mu(\la_{p+1}, \ga) + \nu(\la, \ga, \ga_{p+1})$$
and hence the induced map on vanishing loci coincides with $\Phi_{p+1}$, which concludes the proof.
\end{proof}

Thanks to the proposition, we end up with a diagram
\newline
$$\xymatrix{
\uObs(p+1, \TCois \times \Lagr) \ar[d] \ar[r] & \uObs(p+1, \Cois \times \Lagr) \ar@/_1.3pc/[l] \ar[d] \\
\TCois^{\leq p}\times \Lagr^{\leq p} \ar@/_1pc/[u] \ar[r] & \Cois^{\leq p} \times \Lagr^{\leq p} \ar@/_1.3pc/[l]_{\Phi_{p}} \ar@/_1pc/[u] \\
\TCois^{\leq (p+1)} \times \Lagr^{\leq (p+1)} \ar[u] \ar[r] & \Cois^{\leq (p+1)}\times \Lagr^{\leq (p+1)} \ar[u] \ar@/_1.3pc/[l]_{\Phi_{p+1}}
}
$$

By definition, the vanishing loci of the horizontal sections $\Phi_p$ and $\Phi_{p+1}$ are $\Comp^{\leq p}$ and $\Comp^{\leq p+1}$ respectively. Let us denote by $\uObs(\Comp,p+1)$ the horizontal vanishing locus of the top section. Therefore, we obtain a diagram

\[
\xymatrix{
\uObs(p+1, \TCois \times \Lagr) \ar[d] \ar[r] & \uObs(p+1, \Cois \times \Lagr) \ar[d] \ar@/_1.3pc/[l] & \uObs(p+1, \Comp) \ar[d] \ar[l] \\
\TCois^{\leq p}\times \Lagr^{\leq p} \ar@/_1pc/[u] & \Cois^{\leq p} \times \Lagr^{\leq p} \ar@/_1.3pc/_{\Phi_p}[l] \ar@/_1pc/[u] & \Comp^{\leq p} \ar[l] \ar@/_1pc/[u] \\
\TCois^{\leq (p+1)} \times \Lagr^{\leq (p+1)} \ar[u] & \Cois^{\leq (p+1)}\times \Lagr^{\leq (p+1)} \ar[u] \ar@/_1.3pc/_{\Phi_{p+1}}[l] & \Comp^{\leq(p+1)} \ar[u] \ar[l]
}
\]

By the commutation of limits, we see that $\Comp^{\leq(p+1)}\rightarrow \Comp^{\leq p}$ realizes $\Comp^{\leq(p+1)}$ as a vanishing locus of
\[
\xymatrix{
\Comp^{\leq p} \ar@/^1pc/[r]& \uObs(\Comp,p+1).  \ar[l]
}
\]

By definition, the obstruction bundle $\uObs(\Comp,p+1)$ has a quite explicit description: its fiber over a compatible pair $(\ga, \la) \in \Comp^{\leq p}$ is given by the homotopy fiber of
$$\begin{array}{ccc}
\Pol^t(f, n)^{p+1}\oplus \DR(f)^{p+1} &\longrightarrow &\Pol^t(f, n)^{p+1} \\
(\ga_{p+1}, \la_{p+1}) &\longmapsto &\si(\ga_{p+1}) - \nu(\la_2, \ga_2, \ga_{p+1}) - \mu(\la_{p+1}, \ga_2).
\end{array}$$

Note that by weight reasons $\nu(\lambda, \gamma, \gamma_{p+1}) = \nu(\lambda_2, \gamma_2, \gamma_{p+1})$ and $\mu(\lambda_{p+1}, \gamma) = \mu(\lambda_{p+1}, \gamma_2)$ in $\Pol^t(f, n)^{p+1}$. Comparing the obstruction bundles for $\Comp$ and $\Lagr$, we get a diagram
$$
\xymatrix{
\uObs(p+1, \Comp) \ar[d] \ar[r] & \uObs(p+1, \Lagr) \ar[d] \\
\Comp^{\leq p} \ar@/_1pc/[u] \ar[r] & \Lagr^{\leq p} \ar@/_1pc/[u] \\
\Comp^{\leq (p+1)} \ar[u] \ar[r] & \Lagr^{\leq (p+1)} \ar[u]
}
$$
where the horizontal arrows are given by the natural projections.

We denote by $\uObs(p+1,\Comp^{nd})$ the restriction of the obstruction bundle $\uObs(p+1,\Comp)$ to the subspace $\Comp^{nd} \subset \Comp$.

\begin{prop}\label{prop:equivofobstr}
The projection
$$\uObs(p+1, \Comp^{nd})\rightarrow \uObs(p+1, \Lagr)$$
is an equivalence.
\end{prop}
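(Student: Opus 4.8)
The plan is to combine the inductive hypothesis that $\Comp^{nd,\leq p}\to\Lagr^{\leq p}$ is an equivalence with a fiberwise check. Since the map $\uObs(p+1,\Comp^{nd})\to\uObs(p+1,\Lagr)$ covers $\Comp^{nd,\leq p}\to\Lagr^{\leq p}$, it suffices to show that it induces an equivalence on the fiber over each point $(\ga,\la)$ of $\Comp^{nd,\leq p}$. By the description recalled above, that fiber of $\uObs(p+1,\Comp)$ is the homotopy fiber of
$$F\colon \Pol^t(f,n)^{p+1}\oplus\DR(f)^{p+1}\longrightarrow\Pol^t(f,n)^{p+1},\qquad (\ga_{p+1},\la_{p+1})\longmapsto \si(\ga_{p+1})-\nu(\la_2,\ga_2,\ga_{p+1})-\mu(\la_{p+1},\ga_2),$$
the corresponding fiber of $\uObs(p+1,\Lagr)$ is $\DR(f)^{p+1}$, and the map between them is the composite $\hofib(F)\to\Pol^t(f,n)^{p+1}\oplus\DR(f)^{p+1}\to\DR(f)^{p+1}$ projecting onto the second summand. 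Writing $F(\ga_{p+1},\la_{p+1})=G(\ga_{p+1})-\mu(\la_{p+1},\ga_2)$ with $G:=\si-\nu(\la_2,\ga_2,-)$, the whole statement reduces to showing that $G\colon\Pol^t(f,n)^{p+1}\to\Pol^t(f,n)^{p+1}$ is an equivalence: if it is, then $(\ga_{p+1},\la_{p+1})\mapsto(G(\ga_{p+1})-\mu(\la_{p+1},\ga_2),\la_{p+1})$ is an automorphism of $\Pol^t(f,n)^{p+1}\oplus\DR(f)^{p+1}$ carrying $F$ to the projection onto the first summand, so $\hofib(F)$ is identified with the second summand $\DR(f)^{p+1}$ and the map to $\DR(f)^{p+1}$ with the identity.

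To see that $G$ is an equivalence I would compute it, using non-degeneracy. On the weight $p+1$ part the operator $\si$ is multiplication by $p$ (it acts by $w-1$ in weight $w$, as witnessed by the identity $[\ga,\si(\ga)]=\tfrac12\si([\ga,\ga])$ already used). For the term $\nu(\la_2,\ga_2,-)$: unwinding the construction of $\nu$ from \cite[Section 3.6]{MS1} together with the formulas for $\mu$ and $\nu$ on weight-one generators, this operator is the derivation of the symmetric algebras $\Pol^t(A,n)$ and $\Pol^t(B/A,n-1)$ whose weight-one part is the endomorphism $\ga_A^\sharp\circ\la_A^\sharp$ of $\bT_A$, respectively $\ga_B^\sharp\circ\la_B^\sharp$ of $\bT_{B/A}$. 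Since $(\ga,\la)$ is a non-degenerate compatible pair, Lemma \ref{lm:compatibilityformula} gives $\ga_A^\sharp\circ\la_A^\sharp\circ\ga_A^\sharp\cong\ga_A^\sharp$ and $\ga_B^\sharp\circ\la_B^\sharp\circ\ga_B^\sharp\cong\ga_B^\sharp$, and as the maps $\ga_A^\sharp,\ga_B^\sharp$ are equivalences we conclude $\ga_A^\sharp\circ\la_A^\sharp\cong\id$ and $\ga_B^\sharp\circ\la_B^\sharp\cong\id$. The derivation of $\Sym^{p+1}$ extending the identity endomorphism is multiplication by $p+1$; since a derivation depends linearly on its weight-one part and the derivation extending a boundary is itself a boundary, the derivation extending an endomorphism homotopic to $\id$ is homotopic to multiplication by $p+1$. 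Hence $\nu(\la_2,\ga_2,-)\cong(p+1)\cdot\id$ on $\Pol^t(f,n)^{p+1}$, so $G\cong p\cdot\id-(p+1)\cdot\id=-\id$, which is an equivalence.

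The main obstacle is the middle step: pinning down $\nu(\la_2,\ga_2,-)$ as the derivation extending $\ga^\sharp\circ\la^\sharp$ on both the absolute and the relative summands of $\Pol^t(f,n)=\bU(\Pol^t(A,n),\Pol^t(B/A,n-1))$, with the correct combinatorial constant and compatibly with the structure maps of the fiber $\bU(-,-)$, and then keeping track of the fact that compatibility only yields a homotopy $\ga^\sharp\circ\la^\sharp\cong\id$, so that $G$ is merely homotopic to $-\id$ rather than equal to it; the rest is formal manipulation of homotopy fibers together with the observation that a map of bundles over $\Comp^{nd,\leq p}$ which is an equivalence on each fiber is an equivalence. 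Note that non-degeneracy enters precisely here: for a degenerate $\ga$ the endomorphism $\ga^\sharp\circ\la^\sharp$ need not be homotopic to the identity and $G$ need not be invertible, which is why the proposition is stated for $\Comp^{nd}$ rather than all of $\Comp$.
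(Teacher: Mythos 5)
Your proposal is correct and follows essentially the same route as the paper's proof: the key step in both is to show that $x\mapsto\si(x)-\nu(\la_2,\ga_2,x)$ is an equivalence on $\Pol^t(f,n)^{p+1}$ by identifying $\nu(\la_2,\ga_2,-)$ with the derivation extending $\ga^\sharp\circ\la^\sharp$, which is homotopic to the identity by Lemma \ref{lm:compatibilityformula} and non-degeneracy, so that the map is homotopic to $-\id$. Your explicit reduction to the fiberwise statement (via the upper-triangular automorphism and the inductive identification of the bases) is if anything slightly more careful than the paper's.
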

\begin{proof}
Let $(\ga, \lambda)$ be a non-degenerate compatible pair in weight $\leq p$. The proposition is equivalent to showing that the natural projection
$$\Pol^t(f,n)^{p+1} \oplus \DR(f)^{p+1} \to \DR(f)^{p+1}$$
is an equivalence.
It is then enough to show that the map
$$
\begin{array}{ccc}
\Pol^t(f,n)^{p+1} & \to & \Pol^t(f,n)^{p+1} \\
x & \mapsto & \si(x) - \nu(\la_2, \ga_2, x)
\end{array}$$
is an equivalence.

We claim that $\nu(\lambda_2, \gamma_2, x)$ is equivalent to $(p+1)x$ if $x$ has weight $p+1$. To show this, it is enough to show the claim for its components $\nu(\lambda_2, \gamma_2, x)_A$ and $\nu(\lambda_2, \gamma_2, x)_B$.

By construction, if $\lambda$ has weight 1, $\nu(\lambda, \gamma_2, x)_A$ is a derivation in $x\in\Pol^t(A, n)$. But $\nu(\lambda, \gamma_2, x)_A$ is a derivation in $\lambda$ relative to $\mu(-, \gamma_2)_A$. Since $\DR(A)$ is generated in weight 1, we conclude that $\nu(\lambda, \gamma_2, x)_A$ is a derivation in $x$ for $\lambda$ of any weight. Therefore, it is enough to prove that $\nu(\lambda_2, \gamma_2, x)_A$ is homotopic to $px$ for $x$ of weight $p=0$ and $p=1$. The case $p=0$ is clear since then $\nu(\lambda_2, \gamma_2, x)_A = 0$. Now suppose $x$ has weight $1$. Let
\[\lambda_A = \sum_{i,j} a_{ij} \ddr f_i \ddr f_j\]
be the two-form on $A$ underlying $\lambda_2$ and $\gamma_A$ the bivector on $A$ underlying $\gamma_2$. Then
\[\nu(\lambda_2, \gamma_2, x)_A = \pm\sum_{i, j} 2a_{ij} [\gamma_A, f_i] [x, f_j].\]

But $\sum_{i, j} 2a_{ij} [\gamma_2, f_i] \ddr f_j\in T_A\otimes \Omega^1_A\cong \Hom(T_A, T_A)$ coincides with $\gamma_A^\sharp\circ\lambda_A^\sharp$. Since $\gamma_2$ and $\lambda_2$ are compatible, $\gamma_A^\sharp\circ \lambda_A^\sharp\cong \id$ and hence $\nu(\lambda_2, \gamma_2, x)_A\cong x$. The claim for $\nu(\lambda_2, \gamma_2, x)_B$ is proved similarly.

We obtain that
$$
\begin{array}{ccc}
\Pol^t(f,n)^{p+1} & \to & \Pol^t(f,n)^{p+1} \\
x & \mapsto & \si(x) - \nu(\la_2, \ga_2, x)
\end{array}$$
is homotopic to $(p - (p+1))\id = -\id$, and it is thus an equivalence.
\end{proof}

Finally, we obtain the required statement.

\begin{prop}
The projection
$$\Comp^{nd}(f,n) \to \Lagr(f,n)$$
is an equivalence.
\end{prop}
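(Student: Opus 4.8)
The plan is to prove the statement by induction on the weight $p \geq 2$, showing at each stage that the projection $\Comp^{nd,\leq p}(f,n) \to \Lagr^{\leq p}(f,n)$ is an equivalence; the statement as formulated then follows by passing to the homotopy limit over $p$, exactly as already observed just above. The base case $p = 2$ is precisely Proposition \ref{prop:equivweight2}.

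For the inductive step I would fix $p \geq 2$ for which $\Comp^{nd,\leq p}(f,n) \to \Lagr^{\leq p}(f,n)$ is known to be an equivalence, and then invoke the obstruction-theoretic picture assembled above, summarized by the diagram
\[
\xymatrix{
\uObs(p+1, \Comp^{nd}) \ar[d] \ar[r] & \uObs(p+1, \Lagr) \ar[d] \\
\Comp^{nd,\leq p} \ar@/_1pc/[u] \ar[r] & \Lagr^{\leq p} \ar@/_1pc/[u] \\
\Comp^{nd,\leq(p+1)} \ar[u] \ar[r] & \Lagr^{\leq(p+1)} \ar[u]
}
\]
in which the left-hand column realizes $\Comp^{nd,\leq(p+1)}$ as the vanishing locus of the obstruction section $\Comp^{nd,\leq p} \to \uObs(p+1, \Comp^{nd})$, the right-hand column does the same for $\Lagr^{\leq(p+1)}$ via Lemma \ref{lm:lagrobs}, and the horizontal maps are the natural projections, which by the compatibility results established in the propositions above intertwine all the section and projection maps. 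The top horizontal map is an equivalence by Proposition \ref{prop:equivofobstr}, and the middle one is an equivalence by the inductive hypothesis. Since the bottom row is obtained functorially as a homotopy limit of the top two rows, and homotopy limits preserve levelwise equivalences, the bottom horizontal map $\Comp^{nd,\leq(p+1)}(f,n) \to \Lagr^{\leq(p+1)}(f,n)$ is an equivalence as well, which closes the induction. Combined with Proposition \ref{prop:comp=cois}, this gives the equivalence $\Cois^{t,nd}(f,n) \simeq \Lagr(f,n)$ of Theorem \ref{thm:coisnd=lagr2}, and hence Theorem \ref{thm:coisnd=lagr}.

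The part I expect to carry the real content is not this closing formal argument but its input, Proposition \ref{prop:equivofobstr}, namely the assertion that $\uObs(p+1, \Comp^{nd})$ maps to $\uObs(p+1, \Lagr)$ by an equivalence. As recorded above, this reduces to the elementary fact that the operator $x \mapsto \si(x) - \nu(\la_2, \ga_2, x)$ is homotopic to $-\id$ on weight-$(p+1)$ polyvectors whenever $(\ga, \la)$ is a non-degenerate compatible pair, which rests in turn on the compatibility identities $\ga_A^\sharp \circ \la_A^\sharp \cong \id$ and $\ga_B^\sharp \circ \la_B^\sharp \cong \id$ of Lemma \ref{lm:compatibilityformula}. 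Given those, the remainder is bookkeeping with homotopy limits together with the section-compatibility computations already carried out, with no further genuinely new ingredient.
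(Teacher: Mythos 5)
Your proposal is correct and follows essentially the same route as the paper: induction on the weight truncation with base case Proposition~\ref{prop:equivweight2}, the inductive step carried out by identifying both $\Comp^{nd,\leq(p+1)}$ and $\Lagr^{\leq(p+1)}$ as vanishing loci and invoking the equivalence of obstruction bundles from Proposition~\ref{prop:equivofobstr}, and finally passage to the limit over $p$. You also correctly locate the real content in Proposition~\ref{prop:equivofobstr} and the compatibility identities of Lemma~\ref{lm:compatibilityformula}; nothing essential is missing.
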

\begin{proof}
By Proposition \ref{prop:equivweight2} the map $\Comp^{nd}(f, n)\rightarrow \Lagr(f, n)^{\leq 2}$ is an equivalence. Suppose that we have already proved that
\[\Comp^{nd}(f, n)^{\leq p}\rightarrow \Lagr(f, n)^{\leq p}\]
is an equivalence for some $p$. We have a diagram
\[
\xymatrix{
\uObs(p+1, \Comp^{nd}) \ar[d] \ar^{\sim}[r] & \uObs(p+1, \Lagr) \ar[d] \\
\Comp^{nd, \leq p}(f, n) \ar@/_1pc/[u] \ar^{\sim}[r] & \Lagr^{\leq p}(f, n) \ar@/_1pc/[u] \\
\Comp^{nd, \leq (p+1)}(f, n) \ar[u] \ar[r] & \Lagr^{\leq (p+1)}(f, n) \ar[u]
}
\]
where the bottom row is obtained as a vanishing locus of the vertical maps and the top map is an equivalence by Proposition \ref{prop:equivofobstr}. Therefore, $\Comp^{nd, \leq (p+1)}(f, n)\rightarrow \Lagr^{\leq (p+1)}(f, n)$ is also an equivalence.

We have
\[\Comp^{nd}(f, n) = \lim_p \Comp^{nd, \leq p}(f, n),\qquad \Lagr(f, n) = \lim_p \Lagr^{\leq p}(f, n)\]
and therefore $\Comp^{nd}(f, n)\rightarrow \Lagr(f, n)$ is also an equivalence.
\end{proof}

These results, together with Proposition \ref{prop:comp=cois}, conclude the proof of Theorem \ref{thm:coisnd=lagr2}, which in turn finally implies Theorem \ref{thm:coisnd=lagr}.

\section{Quantization}

In this section we describe quantizations of $n$-shifted Poisson stacks and $n$-shifted coisotropic structures and show that any $n$-shifted coisotropic structure admits a formal quantization for $n > 1$.

\subsection{Beilinson--Drinfeld operads}
\label{sect:BDoperads}

Recall that one has a family of dg operads $\bP_n$ for $n\in\Z$ which, as we recall, are Hopf. Therefore, the $\infty$-category $\balg_{\bP_n}$ is endowed with a symmetric monoidal structure. Similarly, one has a family of dg Hopf operads $\bE_n$ for $n\geq 0$ which are defined to be
\[\bE_n = \C_\bullet(\E_n; k),\]
where $\E_n$ is the topological operad of little $n$-disks.

\begin{remark}
We consider unital versions of operads $\E_n$. For instance,
\[\bE_0(0) = k,\qquad \bE_0(1) = k,\qquad \bE_0(n) = 0,\ n>1.\]
Therefore, the operad $\bE_0$ controls complexes with a distinguished element.
\end{remark}

The Beilinson--Drinfeld operads $\BD_n$ are operads providing an interpolation between the operads $\bP_n$ and $\bE_n$. That is, they are graded Hopf dg operads over $k\llbracket\hbar\rrbracket$ with $\hbar$ of weight $1$ together with equivalences
\[\BD_n/\hbar\cong \bP_n,\qquad \BD_n[\hbar^{-1}]\cong \bE_n\llpar \hbar\rrpar.\]

The known definition of the operads $\BD_n$ is non-uniform in $n$ and they are defined separately for $\BD_0$, $\BD_1$ and $\BD_n$ for $n\geq 2$. The following is \cite[Definition 2.4.0.1]{CG}.
\begin{defn}
A \emph{$\BD_0$-algebra} is a dg $k\llbracket\hbar\rrbracket$-module together with a degree 1 Lie bracket $\{-,-\}$ and a unital commutative multiplication satisfying the relations
\begin{itemize}
\item $\d(ab) = \d(a) b + (-1)^{|a|} a \d(b) + \hbar\{a, b\}$,

\item $\{x, yz\} = \{x, y\} z + (-1)^{|y||z|}\{x, z\} y$.
\end{itemize}
\end{defn}

These relations define a dg operad $\BD_0$. We introduce a weight grading by assigning weight $-1$ to $\{-,-\}$ and weight $0$ to the multiplication. Note that we have an isomorphism $\BD_0\cong \bP_0\llbracket\hbar\rrbracket$ of bigraded operads; we transfer the Hopf structure from $\bP_0$ to one on $\BD_0$ using this isomorphism.

The following definition is given in \cite[Section 2.4.2]{CG}.
\begin{defn}
A $\BD_1$-algebra is a dg Lie algebra $(A, \{-,-\})$ over $k\llbracket\hbar\rrbracket$ equipped with an associative multiplication satisfying the relations
\begin{itemize}
\item $\hbar\{x, y\} = xy - (-1)^{|x||y|} yx$,

\item $\{x, yz\} = \{x, y\} z + (-1)^{|y||z|} \{x, z\} y$.
\end{itemize}
\end{defn}

We introduce an additional weight grading such that $\{-, -\}$ has weight $-1$ and that the multiplication has weight $0$, thus making $\BD_1$ into a graded dg operad. The Hopf structure on $\BD_1$ is defined so that
\[\Delta(m) = m\otimes m,\qquad \Delta(\{-,-\}) = \{-,-\}\otimes m + m\otimes \{-,-\},\]
where $m\in\BD_1(2)$ is the product and $\{-,-\}\in\BD_1(2)$ is the Lie bracket.

Finally, $\E_n$ has a Postnikov tower which gives a Hopf filtration of $\bE_n$ and we define $\BD_n$ for $n\geq 2$ to be the graded operad obtained as the Rees construction with respect to this filtration.

\begin{remark}
Note that with respect to this filtration $\gr \bE_1\cong \bE_1$ while $\bP_1\not\cong \bE_1$, i.e. the filtration on $\bE_1$ induced by the operad $\BD_1$ is different from the Postnikov filtration. The same remark applies to the case $n=0$.
\end{remark}

Let us now state two important results about the operads $\BD_n$.

\begin{thm}[Formality of the operad of little $n$-disks]
Suppose $n\geq 2$. Then one has an equivalence of graded Hopf dg operads
\[\BD_n\cong \bP_n\llbracket\hbar\rrbracket\]
compatible with the equivalence $\BD_n/\hbar\cong \bP_n$.
\label{thm:Enformality}
\end{thm}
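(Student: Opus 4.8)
The plan is to deduce the statement from the Hopf-operadic formality of the little $n$-disks operad by a Rees-construction argument. By definition $\BD_n$ is the Rees operad of $\bE_n=\C_\bullet(\E_n;k)$ with respect to the Postnikov filtration, while $\bP_n\llbracket\hbar\rrbracket$ is likewise the Rees operad of $\bP_n$ with respect to the (multiplicative, bounded) filtration associated to its weight grading. Since the Rees construction is a functor from suitably filtered dg Hopf operads to $\hbar$-graded dg Hopf operads over $k\llbracket\hbar\rrbracket$ that carries filtered quasi-isomorphisms to quasi-isomorphisms, it suffices to produce a zig-zag of quasi-isomorphisms of \emph{filtered} dg Hopf operads between $\bE_n$, with the Postnikov filtration, and $\bP_n$, with the weight filtration, inducing on associated gradeds the canonical identification $\gr\bE_n\cong\bP_n$.

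First I would invoke formality: for $n\geq 2$ there is a zig-zag of quasi-isomorphisms of dg Hopf operads connecting $\bE_n$ to its homology operad $H_\bullet(\E_n;k)\cong\bP_n$ — the bare operadic statement is the formality theorem of Kontsevich (with an alternative proof of the case $n=2$ by Tamarkin), and the compatibility with the cocommutative coalgebra, i.e. Hopf, structure is available in characteristic zero through the rational formality results of Fresse and Willwacher. Composing with an automorphism of $\bP_n$ if necessary, I would normalise this zig-zag so that the induced map on homology is $\id_{\bP_n}$; this normalisation is precisely what will match the given reduction $\BD_n/\hbar\cong\bP_n$.

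Next I would promote this to a filtered equivalence. For $n\geq 2$ the Postnikov filtration on $\bE_n$ agrees, up to a natural quasi-isomorphism, with the filtration by homological degree, and the associated spectral sequence degenerates at its first page exactly because $\bE_n$ is formal; hence $\gr\bE_n$ has vanishing internal differential and is canonically $\bP_n$ with its weight grading. Using the homotopy transfer theorem I would present $\bP_n$ as a homotopy retract of a cofibrant model of $\bE_n$ as a dg Hopf operad, transport the Postnikov filtration along the retraction, and observe that — the differential on the associated graded being zero — the transported filtration is equivalent to the weight filtration, so that the entire formality zig-zag becomes a zig-zag of filtered dg Hopf operads. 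Applying the Rees functor then yields
\[\BD_n=\mathrm{Rees}(\bE_n)\ \xleftarrow{\ \sim\ }\ \cdot\ \xrightarrow{\ \sim\ }\ \mathrm{Rees}(\bP_n)=\bP_n\llbracket\hbar\rrbracket\]
in $\hbar$-graded Hopf dg operads over $k\llbracket\hbar\rrbracket$, each arrow reducing modulo $\hbar$ to $\id_{\bP_n}$; passing to the homotopy category gives the asserted equivalence, compatibly with $\BD_n/\hbar\cong\bP_n$.

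The main obstacle is controlling the operad structure, the Hopf coalgebra structure, and the Postnikov filtration simultaneously: this is why the essential input is \emph{Hopf}-operadic formality rather than bare operadic formality, and why one must upgrade the formality comparison from a mere quasi-isomorphism to a filtered one. A secondary but genuine point is the identification, at the chain level, of the Postnikov filtration on $\bE_n$ with the homological-degree filtration; this is exactly where the hypothesis $n\geq 2$ is used, since for $n\leq 1$ the two filtrations on $\bE_n$ recalled in the introduction differ and the strategy breaks down.
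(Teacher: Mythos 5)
Your proposal is correct and follows essentially the same route as the paper: the paper gives no argument for Theorem \ref{thm:Enformality} beyond attributing it to the Hopf-operadic formality results of Tamarkin, Kontsevich, Lambrechts--Voli\'{c} and Fresse--Willwacher, which is exactly the external input you invoke. The additional Rees-construction bookkeeping you supply (matching the Postnikov filtration on $\bE_n$ with the weight filtration on $\bP_n$ and applying the Rees functor) is the routine step the paper leaves implicit in its definition of $\BD_n$ for $n\geq 2$.
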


The statement for $n=2$ was proved by Tamarkin \cite{Ta} using the existence of rational Drinfeld associators, by Kontsevich \cite{Ko} and Lambrechts--Voli\'{c} \cite{LV} for all $n\geq 2$ and $k\supset \mathbb{R}$ and finally by Fresse--Willwacher \cite{FW} for all $n\geq 2$ and all fields $k$ of characteristic zero.

\begin{remark}
The space of formality isomorphisms $\bE_n\cong \bP_n$ as dg Hopf operads is nontrivial and is described in \cite[Corollary 5]{FTW}. For instance, the space of formality isomorphisms $\bE_2\cong \bP_2$ has connected components parametrized by the set of Drinfeld associators.
\end{remark}

The following result has been announced by Rozenblyum:
\begin{thm}
Suppose $n\geq 0$. Then one has an equivalence of $k\llbracket\hbar\rrbracket$-linear symmetric monoidal $\infty$-categories
\[\balg_{\BD_{n+1}}(\cM)\cong \balg(\balg_{\BD_n}(\cM)).\]
\label{thm:BDadditivty}
\end{thm}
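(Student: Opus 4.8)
The plan is to build a $k\llbracket\hbar\rrbracket$-linear symmetric monoidal functor
\[\Phi\colon\ \balg_{\BD_{n+1}}(\cM)\longrightarrow \balg(\balg_{\BD_n}(\cM))\]
and to prove it is an equivalence by reducing to the special fibre $\hbar=0$. The target is the $\infty$-category of algebras over the Boardman--Vogt-type tensor operad $\BD_n\otimes_{k\llbracket\hbar\rrbracket}\Ass$, which is again a graded Hopf operad over $k\llbracket\hbar\rrbracket$ with $\hbar$ in weight $1$; in particular both sides are weight-graded and $\hbar$-complete. A dévissage along the weight grading (equivalently, $\hbar$-adically), using $\BD_m/\hbar\simeq\bP_m$ for every $m$, then shows that $\Phi$ is an equivalence provided $\Phi\otimes_{k\llbracket\hbar\rrbracket}k$ is. It therefore remains to exhibit $\Phi$ and to check that modulo $\hbar$ it recovers the Poisson additivity equivalence $\balg_{\bP_{n+1}}(\cM)\simeq\balg(\balg_{\bP_n}(\cM))$ of Theorem \ref{thm:additivity}; as a consistency check, inverting $\hbar$ should return the classical Dunn--Lurie additivity for little disks, tensored with $k\llpar\hbar\rrpar$.

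For $n\geq2$ this is essentially formal. By formality of the little disks operad (Theorem \ref{thm:Enformality}) one has equivalences of graded Hopf dg operads $\BD_n\simeq\bP_n\llbracket\hbar\rrbracket$ and $\BD_{n+1}\simeq\bP_{n+1}\llbracket\hbar\rrbracket$ compatible with reduction mod $\hbar$; fixing such equivalences identifies $\balg_{\BD_m}(\cM)$ with $\bmod_{k\llbracket\hbar\rrbracket}(\balg_{\bP_m}(\cM))$ for $m=n,n+1$. Since the formation of associative algebra objects commutes with the base change along $k\to k\llbracket\hbar\rrbracket$, this yields
\[\balg(\balg_{\BD_n}(\cM))\ \simeq\ \bmod_{k\llbracket\hbar\rrbracket}\big(\balg(\balg_{\bP_n}(\cM))\big)\ \simeq\ \bmod_{k\llbracket\hbar\rrbracket}\big(\balg_{\bP_{n+1}}(\cM)\big)\ \simeq\ \balg_{\BD_{n+1}}(\cM),\]
the middle step being Theorem \ref{thm:additivity}. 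Thus for $n\geq2$ the statement follows from $\bE_n$-formality together with Poisson additivity, and the content lies entirely in $n=0$ and $n=1$, where $\BD_n$ is not formal.

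For $n=1$ I would use the Rees-operad descriptions: $\BD_2$ is the Rees operad of the cohomological filtration on $\bE_2$, while $\BD_1$ is the Rees operad of the PBW filtration on $\bE_1\simeq\Ass$, whose associated graded is $\bP_1$. One then has to refine the Dunn--Lurie equivalence $\bE_2\simeq\bE_1\otimes\bE_1$ to a filtered statement --- namely that it carries the cohomological filtration on $\bE_2$ to the Day convolution of the PBW filtration on the inner factor with the trivial filtration on the outer one --- and apply the (lax monoidal) Rees construction to obtain $\BD_2\simeq\BD_1\otimes_{k\llbracket\hbar\rrbracket}\Ass\llbracket\hbar\rrbracket$, hence $\Phi$ for $n=1$. (The same filtered-Dunn--Lurie argument in fact reproves the case $n\geq2$, with the cohomological filtration in place of the PBW one on the inner factor.) The operad $\BD_0$, by contrast, does not arise from $\bE_0$ by a Rees construction, so for $n=0$ I would instead rerun the proof of Poisson additivity $k\llbracket\hbar\rrbracket$-linearly, with $\BD_0$ and $\BD_1$ replacing $\bP_0$ and $\bP_1$, checking that the centre and deformation-complex constructions used there remain well-behaved in the presence of the operadic differential $\d m=\hbar\{-,-\}$ on $\BD_0$; modulo $\hbar$ this reproduces $\balg_{\bP_1}(\cM)\simeq\balg(\balg_{\bP_0}(\cM))$.

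The hard part is this non-formal range $n\leq1$, where the $\hbar$-dependence genuinely has to be tracked rather than inherited from undeformed Poisson additivity. For $n=1$ the crux is the filtered enhancement of Dunn--Lurie additivity --- the filtration-matching statement above --- which has no $\hbar=0$ shadow; for $n=0$ it is the $\hbar$-linear reworking of Poisson additivity forced by $\BD_0$ lying outside the Rees paradigm. Once $\Phi$ is constructed with the correct behaviour modulo $\hbar$, the remaining steps are soft: $\Phi$ is symmetric monoidal because it is assembled from maps of Hopf operads, and the upgrade from ``equivalence mod $\hbar$'' to ``equivalence'' --- which simultaneously forces compatibility with inverting $\hbar$, i.e.\ with Dunn--Lurie over $k\llpar\hbar\rrpar$ --- is exactly the dévissage of the first paragraph.
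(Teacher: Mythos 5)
First, a point of comparison: the paper does not prove Theorem \ref{thm:BDadditivty} at all --- it is quoted as a result announced by Rozenblyum, and no argument is given (the quantization results of Section 5 rest on the formality equivalence of Theorem \ref{thm:Enformality} rather than on a proof of this additivity). So there is no proof in the paper to match your proposal against; what follows is an assessment of the proposal on its own terms.

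Your reduction of the case $n\geq 2$ to Theorem \ref{thm:Enformality} together with $k\llbracket\hbar\rrbracket$-linear Poisson additivity (Theorem \ref{thm:additivity} applied to $\bmod_{k\llbracket\hbar\rrbracket}(\cM)$) is sound, and you correctly locate the content of the theorem in the non-formal range $n\leq 1$. But two genuine gaps remain. First, the framework of your opening paragraph --- presenting $\balg(\balg_{\BD_n}(\cM))$ as algebras over a Boardman--Vogt-type tensor product $\BD_n\otimes\Ass$ of dg operads, so that the equivalence becomes a weight-graded quasi-isomorphism of operads checkable modulo $\hbar$ --- is not available: for dg operads the levelwise tensor product does not model algebras-in-algebras (this interchange problem is exactly why Poisson additivity is a theorem rather than a formality, and why \cite{Sa2} proceeds through brace constructions and Poisson centres instead of tensoring operads). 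Without an operadic presentation of the target, the d\'evissage ``$\Phi$ is an equivalence iff $\Phi/\hbar$ is'' must be argued at the level of the functor itself (full faithfulness via $\hbar$-complete weight-graded mapping objects, essential surjectivity by lifting algebras order by order in $\hbar$), and none of that is set up. Second, for $n=1$ the ``filtered Dunn--Lurie'' statement --- that the equivalence $\bE_2\simeq \bE_1\otimes\bE_1$ matches the cohomological filtration with the Day convolution of the PBW filtration and the trivial one --- is essentially equivalent to the theorem in that case and is asserted rather than proved; likewise for $n=0$ the instruction to rerun the proof of Poisson additivity with $\BD_0$ and $\BD_1$ in place of $\bP_0$ and $\bP_1$ leaves open precisely whether the centre and bar constructions of \cite{Sa2} survive the deformed differential $\d(ab)=\d(a)b\pm a\d(b)+\hbar\{a,b\}$, which is the whole difficulty. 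In short: the range where the statement follows from known results is handled correctly, but the cases where it does not are reduced to unproved statements of comparable depth, so the proposal is a plausible strategy rather than a proof.
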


Inverting $\hbar$ in this equivalence, we obtain an equivalence of symmetric monoidal $\infty$-categories
\[\balg_{\bE_{n+1}}(\cM)\cong \balg(\balg_{\bE_n}(\cM))\]
constructed by Dunn and Lurie (see \cite[Theorem 5.1.2.2]{Lu}). In the other extreme, setting $\hbar=0$ we obtain an equivalence of symmetric monoidal $\infty$-categories
\[\balg_{\bP_{n+1}}(\cM)\cong \balg(\balg_{\bP_n}(\cM))\]
constructed in \cite{Sa2}.

\begin{remark}
It is expected that for $n\geq 2$ the equivalence of Theorem \ref{thm:BDadditivty} is compatible with Theorem \ref{thm:Enformality} in that the following diagram commutes:
\[
\xymatrix{
\balg_{\BD_{n+1}}(\cM)\ar^{\sim}[r] \ar^{\sim}[d] & \balg_{\bP_{n+1}}(\cM\llbracket\hbar\rrbracket) \ar^{\sim}[d] \\
\balg(\balg_{\BD_n}(\cM)) \ar^{\sim}[r] & \balg(\balg_{\bP_n}(\cM\llbracket\hbar\rrbracket))
}
\]
where the vertical functor on the right is given by Poisson additivity, i.e. the statement of Theorem \ref{thm:BDadditivty} for $\hbar=0$.
\end{remark}

\subsection{Deformation quantization for Poisson structures}

Once we have the definition of the $\BD_n$ operads we can define the notion of deformation quantization.

\begin{defn}
Let $A$ be a $\bP_{n+1}$-algebra in $\cM$. A \emph{deformation quantization} of $A$ is a $\BD_{n+1}$-algebra $A_\hbar$ together with an equivalence of $\bP_{n+1}$-algebras $A_\hbar/\hbar\cong A$.
\end{defn}

\begin{remark}
Suppose $n=0$ and fix a (non-dg) Poisson algebra $A$. A deformation quantization of $A$ in the classical sense is given by an associative algebra $A_\hbar$ flat over $k\llbracket\hbar\rrbracket$ whose multiplication is commutative at $\hbar=0$ together with an isomorphism $A_\hbar/\hbar\cong A$ of Poisson algebras, where $A_\hbar/\hbar$ is equipped with the induced Lie bracket $(ab - ba)/\hbar$. It is easy to see that one can therefore lift $A_\hbar$ to a $\BD_1$-algebra and the flatness condition is necessary to ensure that the derived tensor product $A_\hbar\otimes_{k\llbracket\hbar\rrbracket} k$ coincides with the ordinary tensor product.
\end{remark}

One can similarly give a definition for general stacks, see \cite[Section 3.5.1]{CPTVV}.
\begin{defn}
Let $X$ be a derived Artin stack equipped with an $n$-shifted Poisson structure. A \emph{deformation quantization} of $X$ is a lift of the $\bP_{n+1}$-algebra $\cB_X(\infty)$ in the $\infty$-category of $\bD_{X_{DR}}(\infty)$-modules to a $\BD_{n+1}$-algebra $\cB_{X, \hbar}(\infty)$.
\end{defn}

The following is \cite[Theorem 3.5.4]{CPTVV} and immediately follows from Theorem \ref{thm:Enformality}.
\begin{thm}
Let $X$ be a derived Artin stack equipped with an $n$-shifted Poisson structure for $n > 0$. Then deformation quantizations exist.
\end{thm}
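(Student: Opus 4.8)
The plan is to deduce the statement directly from the formality of the little $n$-disks operad, Theorem \ref{thm:Enformality}, following \cite[Theorem 3.5.4]{CPTVV}. First I would note that, since $n$ is an integer with $n>0$, we have $n+1\geq 2$, so Theorem \ref{thm:Enformality} applies to $\BD_{n+1}$ and provides an equivalence of graded Hopf dg operads $\BD_{n+1}\cong\bP_{n+1}\llbracket\hbar\rrbracket$ which is moreover compatible with the reductions $\BD_{n+1}/\hbar\cong\bP_{n+1}$ and $\bigl(\bP_{n+1}\llbracket\hbar\rrbracket\bigr)/\hbar\cong\bP_{n+1}$.

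Since this is an equivalence of operads in $\dg_k$, it induces an equivalence of $\infty$-categories of algebras in the symmetric monoidal $\infty$-category $\cC_X$ of $\bD_{X_{DR}}(\infty)$-modules,
\[\balg_{\BD_{n+1}}(\cC_X)\;\cong\;\balg_{\bP_{n+1}\llbracket\hbar\rrbracket}(\cC_X),\]
compatibly with the reduction functors $(-)/\hbar$ landing in $\balg_{\bP_{n+1}}(\cC_X)$ on both sides. Next I would use the auxiliary $\hbar$-weight grading to view a $\bP_{n+1}\llbracket\hbar\rrbracket$-algebra in $\cC_X$ as a $\bP_{n+1}$-algebra in $\hbar$-adically complete objects of $\cC_X$: concretely, the $n$-shifted Poisson structure on $X$, which by Definition \ref{def:poissonstack} is a $\bP_{n+1}$-structure on $\cB_X(\infty)$, extends $\hbar$-linearly to a $\bP_{n+1}$-structure on $\cB_X(\infty)\llbracket\hbar\rrbracket$ with $\hbar$ in weight $1$, and this object is flat over $k\llbracket\hbar\rrbracket$, so its reduction modulo $\hbar$ is again $\cB_X(\infty)$. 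Transporting $\cB_X(\infty)\llbracket\hbar\rrbracket$ across the formality equivalence then produces a $\BD_{n+1}$-algebra $\cB_{X,\hbar}(\infty)$ in $\cC_X$ whose reduction mod $\hbar$ is $\cB_X(\infty)$ as a $\bP_{n+1}$-algebra; by definition this is a deformation quantization of $X$, which completes the argument.

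I expect there to be no genuine obstacle here. The only points requiring care are the bookkeeping needed to check that the operad equivalence, a priori living in $\dg_k$, induces the stated equivalence of algebra categories inside the larger category $\cC_X$ in a way compatible with setting $\hbar=0$, and the observation that the $\hbar$-weight grading produced by the Rees/formality construction is an extra grading independent of the weight and mixed gradings already carried by $\cB_X(\infty)$; both are automatic from $k$-linearity of all the constructions involved. By contrast, the naive "constant family" argument genuinely fails for $n\leq 0$, where the cohomological filtration defining $\BD_{n+1}$ differs from the Poisson one, which is precisely why quantization in those cases requires the more delicate, often only twisted or curved, treatments of \cite{Pri3} and \cite{Pri2}.
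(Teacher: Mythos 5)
Your argument is correct and is exactly the one the paper intends: the paper simply cites \cite[Theorem 3.5.4]{CPTVV} and notes that the result "immediately follows from Theorem \ref{thm:Enformality}," and your spelled-out version (apply formality to $\BD_{n+1}$ since $n+1\geq 2$, form the constant family $\cB_X(\infty)\llbracket\hbar\rrbracket$, and transport across the equivalence compatibly with reduction mod $\hbar$) is the same construction the paper makes explicit in its proof of Theorem \ref{thm:coisotropicdefquantization}. No gaps; your closing remark about why the argument fails for $n\leq 0$ also matches the paper's discussion of the $\BD_0$ and $\BD_1$ filtrations.
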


Given a derived Artin stack $X$ we have a symmetric monoidal $\infty$-category $\Perf(X)$ of perfect complexes which by \cite[Corollary 2.4.12]{CPTVV} can be identified with
\[\Perf(X)\cong \Gamma(X_{DR}, \bmod_{\cB_X(\infty)}^{perf}),\]
where $\bmod_{\cB_X(\infty)}^{perf}$ is the prestack on $X_{DR}$ of symmetric monoidal $\infty$-categories of perfect $\cB_X(\infty)$-modules.

Now suppose $X$ has an $n$-shifted Poisson structure which admits a deformation quantization. Then $\cB_X(\infty)$ has a deformation over $k\llbracket\hbar\rrbracket$ as an $\bE_{n+1}$-algebra. Therefore, using the equivalence $\balg_{\E_{n+1}}\cong \balg_{\E_n}(\balg)$ we obtain that $\bmod_{\cB_X(\infty)}^{perf}$ has a deformation $\bmod_{\cB_{X, \hbar}(\infty)}$ as a prestack of $\E_n$-monoidal $\infty$-categories. Hence we conclude that $\Perf(X)$ itself inherits a deformation over $k\llbracket\hbar\rrbracket$ as an $\E_n$-monoidal $\infty$-category.

\subsection{Deformation quantization for coisotropic structures}

Now we define deformation quantization in the relative setting. Recall that by \cite[Theorem 3.7]{Sa2} one can identify $\bP_{[n+1, n]}$-algebras with a pair of an associative algebra $A$ and a left $A$-module $B$ in the $\infty$-category of $\bP_n$-algebras.

\begin{defn}
Let $(A, B)$ be a $\bP_{[n+1, n]}$-algebra. A \emph{deformation quantization} of $(A, B)$ is a pair $(A_\hbar, B_\hbar)\in\blmod(\balg_{\BD_n})$ together with an equivalence $(A_\hbar, B_\hbar)/\hbar\cong (A, B)$ of objects of $\blmod(\balg_{\bP_n})$.
\end{defn}

Note that a deformation quantization of a $\bP_{[n+1, n]}$-algebra $(A, B)$ in particular gives a deformation quantization of the $\bP_{n+1}$-algebra $A$ by Theorem \ref{thm:BDadditivty}.

\begin{defn}
Let $f\colon L\rightarrow X$ be a morphism of derived Artin stacks equipped with an $n$-shifted coisotropic structure. A \emph{deformation quantization} of $f$ is given by a deformation quantization $\cB_{X,\hbar}(\infty)$ of $X$  and a deformation quantization $(f^*\cB_{X, \hbar}(\infty), \cB_{L, \hbar}(\infty))$ of the $\bP_{[n+1, n]}$-algebra $(f^*\cB_X(\infty), \cB_L(\infty))$ in the $\infty$-category of $\bD_{L_{DR}}(\infty)$-modules.
\end{defn}

As in the case of shifted Poisson structures, we have the following obvious result:
\begin{thm}
Let $f\colon L\rightarrow X$ be a morphism of derived Artin stacks equipped with an $n$-shifted coisotropic structure for $n>1$. Then deformation quantizations of $f$ exist.
\label{thm:coisotropicdefquantization}
\end{thm}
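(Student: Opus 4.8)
The plan is to construct the \emph{trivial} deformation quantization (constant in $\hbar$), exactly as in the absolute case treated in \cite{CPTVV}, by transporting structures across the formality equivalence $\BD_m\cong\bP_m\llbracket\hbar\rrbracket$; the only real work is bookkeeping for the relative module structure and its compatibility along $f$.

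First I would unwind the data. By Definition \ref{def:generalcoisotropics} together with Corollary \ref{cor:coiscptvv} applied in the $\infty$-category $\cC_L$ of $\bD_{L_{DR}}(\infty)$-modules, an $n$-shifted coisotropic structure on $f$ is the same as a $\bP_{n+1}$-structure on $\cB_X(\infty)$ in $\cC_X$ together with a lift of $f^*_{\cB}(\infty)\colon f^*\cB_X(\infty)\to\cB_L(\infty)$ to an object $(A,B)\in\blmod(\balg_{\bP_n}(\cC_L))$, where $A=f^*\cB_X(\infty)$ carries the pulled-back Poisson structure, $B=\cB_L(\infty)$, and the two agree on $f^*\cB_X(\infty)$. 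Thus it suffices to lift $\cB_X(\infty)$ to a $\BD_{n+1}$-algebra in $\cC_X$ and $(A,B)$ to an object of $\blmod(\balg_{\BD_n}(\cC_L))$, compatibly.

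Since $n\geq 2$, both $\bE_n$ and $\bE_{n+1}$ are formal, so Theorem \ref{thm:Enformality} gives equivalences of graded Hopf dg operads $\BD_n\cong\bP_n\llbracket\hbar\rrbracket$ and $\BD_{n+1}\cong\bP_{n+1}\llbracket\hbar\rrbracket$ compatible with reduction mod $\hbar$; in particular, for $m\in\{n,n+1\}$ a $\BD_m$-algebra in a (nice enough) symmetric monoidal $\infty$-category is the same as a $\bP_m$-algebra in $k\llbracket\hbar\rrbracket$-modules there. Now take the constant $\hbar$-deformations. On the one hand, $\cB_X(\infty)$ regarded as a $\bP_{n+1}$-algebra over $k\llbracket\hbar\rrbracket$ transports across formality to a $\BD_{n+1}$-algebra $\cB_{X,\hbar}(\infty)$ with $\cB_{X,\hbar}(\infty)/\hbar\simeq\cB_X(\infty)$, i.e.\ a deformation quantization of $X$. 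On the other hand, $(A,B)$ regarded as an object of $\blmod(\balg_{\bP_n}(\cC_L))$ over $k\llbracket\hbar\rrbracket$ transports to an object $(f^*\cB_{X,\hbar}(\infty),\cB_{L,\hbar}(\infty))\in\blmod(\balg_{\BD_n}(\cC_L))$ reducing mod $\hbar$ to $(A,B)$.

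The step I expect to be the main obstacle is checking that these two data are compatible: namely that the associative algebra underlying $(f^*\cB_{X,\hbar}(\infty),\cB_{L,\hbar}(\infty))$, viewed through $\BD$-additivity (Theorem \ref{thm:BDadditivty}) as a $\BD_{n+1}$-algebra in $\cC_L$, agrees with the pullback along $f$ of the quantization $\cB_{X,\hbar}(\infty)$ of $X$. Since the formality equivalences are natural in symmetric monoidal functors, $f^*$ commutes with transport across formality, so the claim reduces to the compatibility of $\bP$-additivity over $k\llbracket\hbar\rrbracket$ with $\BD$-additivity under the formality equivalences — the compatibility anticipated in the remark following Theorem \ref{thm:BDadditivty}. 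For the constant deformations used here it is harmless, because at $\hbar=0$ both $\BD$-additivity and the formality equivalences specialize to $\bP$-additivity and the identity, and all structures in sight are obtained by base change from their $\hbar=0$ fibre. Finally, this is exactly where $n\geq 2$ is used: lifting the $\bP_n$-algebra $B=\cB_L(\infty)$ to a $\BD_n$-algebra relies on formality of $\bE_n$, which fails for $n=1$ (there $\BD_1$ induces the PBW filtration, not the trivial one), so this method says nothing about quantizing $1$-shifted coisotropic structures.
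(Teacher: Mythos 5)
Your proposal is correct and follows essentially the same route as the paper: both construct the trivial (constant) $\hbar$-deformation by tensoring the $\bP_{[n+1,n]}$-algebra $(f^*\cB_X(\infty),\cB_L(\infty))$ with $k\llbracket\hbar\rrbracket$ and transporting across the formality equivalence $\blmod(\balg_{\bP_n}(-\llbracket\hbar\rrbracket))\cong\blmod(\balg_{\BD_n}(-))$ furnished by Theorem \ref{thm:Enformality}, then checking that the resulting quantization of $f^*\cB_X(\infty)$ agrees with the pullback of the quantization of $\cB_X(\infty)$. Your extra discussion of why that last compatibility holds (naturality of formality under symmetric monoidal functors) is a reasonable elaboration of what the paper dispatches in one sentence.
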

\begin{proof}
Indeed, Theorem \ref{thm:Enformality} gives an equivalence of symmetric monoidal $\infty$-categories
\[\blmod(\balg_{\bP_n}(\cM\llbracket\hbar\rrbracket))\cong \blmod(\balg_{\BD_n}(\cM))\]
and we have a functor $\balg_{\bP_n}(\cM)\rightarrow \balg_{\bP_n}(\cM\llbracket\hbar\rrbracket)$ sending a $\bP_n$-algebra $A$ to $A\otimes k\llbracket\hbar\rrbracket$.

Combining these two functors, we obtain a functor
\[\blmod(\balg_{\bP_n}(\cM))\longrightarrow \blmod(\balg_{\BD_n}(\cM))\]
which gives a deformation quantization of any $\bP_{[n+1, n]}$-algebra. The result follows by applying the functor to the $\bP_{[n+1, n]}$-algebra $(f^*\cB_X(\infty), \cB_L(\infty))$ and noticing that the $\BD_{n+1}$-algebra obtained as the quantization of $f^*\cB_X(\infty)$ is canonically equivalent to the pullback under $f$ of the quantization of $\cB_X(\infty)$.
\end{proof}

Given a morphism of derived Artin stacks $f\colon L\rightarrow X$, the symmetric monoidal functor $f^*\colon \Perf(X)\rightarrow \Perf(L)$ can be realized as the composite
\begin{align*}
\Perf(X)&\cong \Gamma(X_{DR}, \bmod_{\cB_X(\infty)}^{perf}) \\
&\rightarrow \Gamma(L_{DR}, \bmod_{f^*\cB_X(\infty)}^{perf}) \\
&\rightarrow \Gamma(L_{DR}, \bmod_{\cB_L(\infty)}^{perf}) \\
&\cong \Perf(L).
\end{align*}

Note that $f^*\colon\Perf(X)\rightarrow \Perf(L)$ promotes the pair $(\Perf(X), \Perf(L))$ to an object of $\blmod(\balg_{\E_{n-1}}(\St))$, where $\St$ is the $\infty$-category of small stable dg categories. Therefore, as before we see that given an $n$-shifted coisotropic structure we get a deformation over $k\llbracket\hbar\rrbracket$ of the pair $(\Perf(X), \Perf(L))$ as an object of $\blmod(\balg_{\E_{n-1}}(\St))$. A bit more explicit description of this $\infty$-category is given by the following statement.

\begin{conjecture}
Let $\SC_n$ be the $n$-dimensional topological Swiss--cheese operad. Then one has equivalences of symmetric monoidal $\infty$-categories
\[\balg_{\SC_{n+m}}\cong \balg_{\E_n}(\balg_{\SC_m})\cong \balg_{\SC_m}(\balg_{\E_n}).\]
\end{conjecture}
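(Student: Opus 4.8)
The statement is an additivity property for the Swiss--cheese operads, and the plan is to reduce it to Dunn additivity for the little disks operads. Since it is stated only as a conjecture I will indicate the strategy rather than carry it out in full.

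First I would realize $\SC_k$ as the operad of little half-disks in the half-space $\mathbb{H}^k=\mathbb{R}^{k-1}\times\mathbb{R}_{\geq 0}$: a two-colored operad with a closed color, whose operations are configurations of round disks contained in the interior $\mathbb{R}^{k-1}\times\mathbb{R}_{>0}$, and an open color, whose operations are configurations of half-disks whose flat side lies on the boundary $\mathbb{R}^{k-1}\times\{0\}$, with the mixed operations recording the insertion of closed disks into the interior of open half-disks. In this model the product decompositions $\mathbb{H}^{n+m}\cong\mathbb{R}^n\times\mathbb{H}^m$ and $\mathbb{R}^{n+m}\cong\mathbb{R}^n\times\mathbb{R}^m$ produce, exactly as in Dunn's proof that $\E_n\otimes\E_m\simeq\E_{n+m}$, a comparison map of two-colored $\infty$-operads
\[\Theta\colon\ \E_n\otimes\SC_m\longrightarrow\SC_{n+m},\]
where $\otimes$ denotes the Boardman--Vogt tensor product of $\infty$-operads, and the tensor of the one-colored operad $\E_n$ with the two-colored operad $\SC_m$ is again two-colored. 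The first main step is the following \textbf{Claim}: the map $\Theta$ is an equivalence of $\infty$-operads, equivalently an equivalence of Hopf dg operads after passing to chains.

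Since an $\infty$-operad is determined by its multi-mapping spaces, it suffices to verify the Claim on all operation spaces, and these decompose according to the colors. On the closed color $\Theta$ restricts to the Dunn equivalence $\E_n\otimes\E_m\simeq\E_{n+m}$; on the open color it restricts to $\E_n\otimes\E_{m-1}\simeq\E_{n+m-1}$; and on the mixed operations one must check that the rectification of a product of an $\E_n$-configuration inside an $(\E_n\otimes\SC_m)$-configuration preserves the boundary-incidence condition and is compatible with the two previous instances of Dunn additivity. One clean way to organize this is to work inside the factorization homology / disk algebra formalism for manifolds with boundary of Ayala--Francis--Tanaka, in which $\SC_k$ is recovered as the endomorphism operad of the basic pair $(\mathbb{R}^k,\mathbb{H}^k)$; the above product decompositions then furnish $\Theta$ together with an inverse. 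I expect that making this boundary-preserving rectification precise and uniform across all arities of closed, open and mixed operations will be the main obstacle; everything downstream is formal.

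Granting the Claim, the conjecture follows from standard $\infty$-operadic formalism. For $\infty$-operads $\cO,\cP$ and a symmetric monoidal $\infty$-category $\cM$ there is a natural equivalence $\balg_{\cO\otimes\cP}(\cM)\simeq\balg_{\cO}(\balg_{\cP}(\cM))$, see \cite{Lu}. Applying this with $(\cO,\cP)=(\E_n,\SC_m)$ and invoking the Claim gives
\[\balg_{\SC_{n+m}}(\cM)\ \simeq\ \balg_{\E_n\otimes\SC_m}(\cM)\ \simeq\ \balg_{\E_n}(\balg_{\SC_m}(\cM)),\]
and the symmetry $\E_n\otimes\SC_m\simeq\SC_m\otimes\E_n$ of the tensor product yields the remaining equivalence with $\balg_{\SC_m}(\balg_{\E_n}(\cM))$. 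As $\E_n$ and $\SC_m$ are Hopf operads --- chains on topological operads, the diagonal being available on each color --- every category of algebras above is symmetric monoidal, and since $\Theta$ is an equivalence of Hopf operads all of these equivalences are canonically symmetric monoidal. As an alternative to the geometric input $\Theta$, one can instead first establish the one-step Swiss--cheese additivity $\balg_{\SC_k}(\cM)\simeq\blmod(\balg_{\E_{k-1}}(\cM))$ --- an $\SC$-analogue of Poisson additivity, resting on the theory of $\E_n$-centralizers of \cite{Lu} --- and then combine it with Dunn additivity $\balg_{\E_{n+m-1}}\simeq\balg_{\E_n}(\balg_{\E_{m-1}})$ and with the commutation $\balg_{\E_n}(\blmod(-))\simeq\blmod(\balg_{\E_n}(-))$, itself another instance of the symmetry of the operadic tensor product; this re-derives both displayed equivalences and is consistent with the role played by $\blmod(\balg_{\E_{n-1}})$ in the preceding discussion.
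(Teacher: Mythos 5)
The paper does not actually prove this statement: it is recorded as a conjecture precisely because the authors do not know a proof, so there is nothing in the text to compare your argument against step by step. Judged on its own terms, your proposal correctly identifies the standard reduction --- produce a map $\Theta\colon \E_n\otimes\SC_m\to\SC_{n+m}$ from the product decomposition $\mathbb{R}^{n-1+m}\times\mathbb{R}_{\geq 0}\cong\mathbb{R}^n\times(\mathbb{R}^{m-1}\times\mathbb{R}_{\geq 0})$, show it is an equivalence, and then conclude by the universal property of the Boardman--Vogt tensor product together with its symmetry. The downstream formalities are indeed fine: $\balg_{\cO\otimes\cP}(\cM)\simeq\balg_{\cO}(\balg_{\cP}(\cM))$ holds essentially by definition of the tensor product of $\infty$-operads, and the Hopf structures give the symmetric monoidal refinement.

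The genuine gap is that your ``Claim'' is not a lemma feeding into the conjecture; it \emph{is} the conjecture, and you do not prove it. Two points make it more than a routine verification. First, since the tensor product of $\infty$-operads is characterized by a universal property rather than by an explicit formula, even \emph{constructing} $\Theta$ requires exhibiting a coherent interchange between $\E_n$-configurations and $\SC_m$-configurations on both colors simultaneously, including the mixed operations where the boundary-incidence condition lives; this is exactly the ``boundary-preserving rectification'' you defer. Second, checking that $\Theta$ is an equivalence cannot be done color by color as you suggest: the closed and open restrictions reduce to Dunn--Lurie additivity, but the multi-mapping spaces of mixed arity are not determined by those restrictions, and it is precisely there that $\SC_k$ differs from a product of little-disks operads (recall from the introduction of this paper that $\C_\bullet(\SC_n)$ is not formal, so its homotopy type is genuinely subtle). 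Your alternative route via $\balg_{\SC_k}(\cM)\simeq\blmod(\balg_{\E_{k-1}}(\cM))$ has the same status: that one-step additivity is itself an open higher-dimensional Swiss--cheese statement of comparable difficulty, not an available input. Also, a small imprecision: an equivalence of topological $\infty$-operads and an equivalence of Hopf dg operads on chains are not ``equivalent'' statements --- the former implies the latter, and only the latter is needed for the $k$-linear conclusion --- so the word ``equivalently'' should be dropped.
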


This conjecture is a slight generalization of the Dunn--Lurie additivity statement to stratified factorization algebras. Given this statement we see that a quantization of an $n$-shifted coisotropic structure gives rise to a deformation of the pair $(\Perf(X), \Perf(L))$ as an algebra over $\SC_n$.

\begin{example}
Let $G\subset D$ be an inclusion of affine algebraic groups such that $\fd=\Lie(D)$ carries an element $c\in\Sym^2(\fd)^D$ and the inclusion $\g=\Lie(G)\subset \fd$ is coisotropic with respect to $c$. Then it is shown in \cite[Proposition 2.9]{Sa3} that $\B D$ carries a 2-shifted Poisson structure and $\B G\rightarrow \B D$ a coisotropic structure. In particular, we may apply Theorem \ref{thm:coisotropicdefquantization} to this coisotropic.

Fix a Drinfeld associator which provides an equivalence of symmetric monoidal $\infty$-categories
\[\balg_{\bE_2}\cong \balg_{\bP_2}.\]
Therefore, we obtain an equivalence of $\infty$-categories
\[\blmod(\balg_{\bE_2})\cong \blmod(\balg_{\bP_2}).\]
Thus, we obtain a quantization $\widetilde{\Rep}(D)$ of $\B D$ which is a braided monoidal category and a quantization $\widetilde{\Rep}(G)$ of $\B G$ which is a monoidal category. Moreover, $\widetilde{\Rep}(G)$ becomes a monoidal module category over $\widetilde{\Rep}(D)$.
\end{example}


\begin{thebibliography}{CPTVV}

\bibitem[BG]{BG} V. Baranovsky, V. Ginzburg, Gerstenhaber-Batalin-Vilkoviski structures on coisotropic intersections, Math. Res. Lett. \textbf{17} (2010) 211--229, \href{http://arxiv.org/pdf/0907.0037.pdf}{arXiv:0907.0037}.

\bibitem[BGKP]{BGKP} V. Baranovsky, V. Ginzburg, D. Kaledin, J. Pecharich, Quantization of line bundles on Lagrangian subvarieties, Selecta Math. \textbf{22} (2016) 1--25, \href{https://arxiv.org/abs/1403.3493}{arXiv:1403.3493}.

\bibitem[CF]{CF} A. Cattaneo, G. Felder, Relative formality theorem and quantisation of coisotropic submanifolds, Adv. in Math. \textbf{208} (2007) 521--548, \href{http://arxiv.org/abs/math/0501540}{arXiv:math/0501540}.

\bibitem[CG]{CG} K. Costello, O. Gwilliam, \emph{Factorization algebras in quantum field theory}, vol. 2 (2016). Available at \url{http://people.mpim-bonn.mpg.de/gwilliam/vol2may8.pdf}.

\bibitem[CPTVV]{CPTVV} D. Calaque, T. Pantev, B. To\"en, M. Vaqui\'e, G. Vezzosi, Shifted Poisson structures and deformation quantization, J. Topol. \textbf{10} (2017) 483--584, \href{http://arxiv.org/abs/1506.03699}{arXiv:1506.03699}.

\bibitem[FTW]{FTW} B. Fresse, V. Turchin, T. Willwacher, The rational homotopy of mapping spaces of $E_n$ operads, \href{https://arxiv.org/abs/1703.06123}{arXiv:1703.06123}.

\bibitem[FW]{FW} B. Fresse, T. Willwacher, The intrinsic formality of $E_n$-operads, \href{https://arxiv.org/abs/1503.08699}{arXiv:1503.08699}.

\bibitem[Ha]{Ha} R. Haugseng, The higher Morita category of $E_n$-algebras, Geom. Topol. \textbf{21} (2017) 1631--1730, \href{https://arxiv.org/abs/1412.8459}{arXiv:1412.8459}.

\bibitem[JS]{JS} D. Joyce, P. Safronov, A Lagrangian Neighbourhood Theorem for shifted symplectic derived schemes, to appear in Annales Math. de la Facult\'e des Sciences de Toulouse, \href{http://arxiv.org/abs/1506.04024}{arXiv:1506.04024}.

\bibitem[Ko]{Ko} M. Kontsevich, Operads and motives in deformation quantization, Lett. Math. Phys. \textbf{48} (1999), 35--72, \href{https://arxiv.org/abs/math/9904055}{arXiv:math/9904055}.

\bibitem[Li]{Li} M. Livernet, Non-formality of the Swiss-Cheese operad, J. Topology \textbf{8} (2015) 1156--1166, \href{https://arxiv.org/abs/1404.2484}{arXiv:1404.2484}.

\bibitem[LV]{LV} P. Lambrechts, I. Voli\'{c}, Formality of the little $N$-disks operad, Mem. Amer. Math. Soc. \textbf{230} (2014) viii+116, \href{https://arxiv.org/abs/0808.0457}{arXiv:0808.0457}.

\bibitem[Lu]{Lu} J. Lurie, \emph{Higher Algebra}, available at \url{http://math.harvard.edu/~lurie/papers/HA.pdf}.

\bibitem[Me]{Me} V. Melani, Poisson bivectors and Poisson brackets on affine derived stacks, Adv. in Math. \textbf{288} (2016) 1097--1120, \href{http://arxiv.org/abs/1409.1863}{arXiv:1409.1863}.

\bibitem[MS1]{MS1} V. Melani, P. Safronov, Derived coisotropic structures I: affine case, to appear in Selecta Mathematica, \href{https://arxiv.org/abs/1608.01482}{arXiv:1608.01482}.

\bibitem[OP]{OP} Y.-G. Oh, J.-S. Park, Deformations of coisotropic submanifolds and strong homotopy Lie algebroids, Invent. Math. \textbf{161} (2005) 287--306, \href{http://arxiv.org/abs/math/0305292}{arXiv:math/0305292}.

\bibitem[Pri1]{Pri1} J. Pridham, Shifted Poisson and symplectic structures on derived $N$-stacks, J. Topol. \textbf{10} (2017) 178--210, \href{http://arxiv.org/abs/1504.01940}{arXiv:1504.01940}.

\bibitem[Pri2]{Pri2} J. Pridham, Deformation quantisation for $(-1)$-shifted symplectic structures and vanishing cycles, \href{https://arxiv.org/abs/1508.07936}{arXiv:1508.07936}.

\bibitem[Pri3]{Pri3} J. Pridham, Deformation quantisation for unshifted symplectic structures on derived Artin stacks, \href{https://arxiv.org/abs/1604.04458}{arXiv:1604.04458}.

\bibitem[Pri4]{Pri4} J. Pridham, Quantisation of derived Lagrangians, \href{http://arxiv.org/abs/1607.01000}{arXiv:1607.01000}.

\bibitem[PTVV]{PTVV} T. Pantev, B. To\"en, M. Vaqui\'e, G. Vezzosi, Shifted symplectic structures, Publ. Math. IHES \textbf{117} (2013) 271--328, \href{http://arxiv.org/abs/1111.3209}{arXiv:1111.3209}.

\bibitem[Sa1]{Sa1} P. Safronov, Poisson reduction as a coisotropic intersection, Higher Structures \textbf{1} (2017) 87--121, \href{http://arxiv.org/abs/1509.08081}{arXiv:1509.08081}.

\bibitem[Sa2]{Sa2} P. Safronov, Braces and Poisson additivity, \href{https://arxiv.org/abs/1611.09668}{arXiv:1611.09668}.

\bibitem[Sa3]{Sa3} P. Safronov, Poisson-Lie structures as shifted Poisson structures, \href{https://arxiv.org/abs/1706.02623}{arXiv:1706.02623}.

\bibitem[Sch]{Sch} C. Scheimbauer, Factorization homology as a fully extended topological field theory, Ph.D. thesis, available at \url{https://people.maths.ox.ac.uk/scheimbauer/ScheimbauerThesis.pdf}.

\bibitem[Ta]{Ta} D. Tamarkin, Formality of Chain Operad of Little Discs, Lett. Math. Phys. \textbf{66} (2003) 65--72, \href{https://arxiv.org/abs/math/9809164}{arXiv:math/9809164}.

\bibitem[TV]{TV} B. To\"en, G. Vezzosi, Homotopical algebraic geometry I: topos theory, Adv. in Math. \textbf{193} (2005), 257-372, \href{https://arxiv.org/abs/math/0207028}{arXiv:0207028}.

\bibitem[We]{We} A. Weinstein, Coisotropic calculus and Poisson groupoids, J. Math. Soc. Japan \textbf{40} (1988), 705--727.
\end{thebibliography}
\end{document}